\documentclass[11pt]{amsart}
\usepackage{amsmath}
\usepackage{graphicx}

\newtheorem{proposition}{Proposition}[section]
\newtheorem{theorem}[proposition]{Theorem}
\newtheorem{lemma}[proposition]{Lemma}
\newtheorem{corollary}[proposition]{Corollary}

\theoremstyle{definition}
\newtheorem{definition}[proposition]{Definition}

\theoremstyle{remark}
\newtheorem{remark}[proposition]{Remark}
\newtheorem{example}[proposition]{Example}

\def\C{\mathcal{C}}
\def\R{\mathbb{R}}
\def\Z{\mathbb{Z}}
\def\fig#1{\raisebox{-2.2ex}{\includegraphics[height=5.2ex]{#1}}}
\def\figg#1{\raisebox{-3.2ex}{\includegraphics[height=8.2ex]{#1}}}

\begin{document}

\title[Normal rulings of solid torus links]{HOMFLY-PT polynomial and normal rulings of Legendrian solid torus links}
\author{Dan Rutherford}

\maketitle

\begin{abstract} We show that for any Legendrian link $L$ in the $1$-jet space of $S^1$ the $2$-graded ruling polynomial, $R^2_L(z)$, is determined by the Thurston-Bennequin number and the HOMFLY-PT polynomial.  Specifically, we recover $R^2_L(z)$ as a coefficient of a particular specialization of the HOMFLY-PT polynomial.  Furthermore, we show that this specialization may be interpreted as the
standard inner product on the algebra of symmetric functions that is often
identified with a certain subalgebra of the HOMFLY-PT skein module of the
solid torus.


In contrast to the $2$-graded case, we are able to use $0$-graded ruling polynomials to distinguish many homotopically non-trivial Legendrian links with identical classical invariants.  

\end{abstract}

\section{Introduction}

The study of Legendrian knots in standard contact $\R^3$ up to the equivalence relation of Legendrian isotopy provides an interesting  variation on the classical theory of smooth knots in $3$-space.  Each smooth knot type has Legendrian representatives.  However, Legendrian knots of the same underlying smooth knot type need not be equivalent as Legendrian knots.  

There are two `classical invariants' capable of distinguishing between Legendrian knots with the same underlying smooth knot type.  They are known as the Thurston-Bennequin number, $\textit{tb}(L)$,  and rotation number, $r(L)$.  Beginning in the late 1990's, several stronger invariants of Legendrian knots have been developed.  Of particular interest for this article are invariants arising from counts of certain decompositions of front diagrams known as normal rulings.  Normal rulings arose independently in the work of Fuchs \cite{F} 
in connection with augmentations of the Chekanov-Eliashberg DGA and also in the work of Chekanov and Pushkar \cite{ChP} who were motivated by generating families.  Chekanov and Pushkar defined for each divisor $p$ of $2 r(L)$ an invariant which can be neatly encoded as the $p$-graded ruling polynomial, $R^p_L(z)$.

A particularly elegant aspect of Legendrian knot theory is the interplay between Legendrian invariants and invariants of the underlying smooth knot type.  
For instance, the values of the classical invariants are constrained by invariants which depend only on the smooth knot type via `Bennequin type inequalities' (see for instance \cite{Ng}).  As an example, Fuchs and Tabachnikov \cite{FT} proved that for any Legendrian link $L$ in standard contact $\R^3$
\begin{equation}
\label{eq:FuchsTab}
\textit{tb}(L) + |r(L)| \leq - \deg_aP_L(a, z)
\end{equation}
where $P_L \in \Z[a^{\pm 1}, z^{\pm 1}]$ is the HOMFLY-PT polynomial\footnote{For consistency with the main body of this article $P_L$ is normalized so that the unknot has the value $(a- a^{-1})/z$. This differs from \cite{FT} and \cite{R}.}.
In turn, Legendrian knots can shed light on topological knot invariants.  It is shown in \cite{R} that the coefficient of $a^{-\textit{tb}(L) }$ in $P_L(a,z)$ is precisely $R^2_L(z)$, and hence may be viewed as counting $2$-graded normal rulings.  

The main purpose of the present article is to investigate the relationship between $2$-graded normal rulings and the HOMFLY-PT polynomial of Legendrian links in the $1$-jet space of the circle, $J^1(S^1)$.  $J^1(S^1)$ is a contact manifold diffeomorphic to an open solid torus, and Legendrian links in $J^1(S^1)$ can be represented diagrammatically via their front projections to the annulus.  

The solid torus case is more interesting than $\R^3$ due to the nature of the HOMFLY-PT polynomial.  Unlike link diagrams in the plane, annular link diagrams cannot always be reduced to a multiple of the unknot via repeated applications of the skein relations.  Instead there are sequences of oriented diagrams $A_{\pm 1}, A_{\pm 2}, \ldots$ whose products (defined by stacking, see Section 2) form the base cases for evaluating $P_L$.  This gives rise to a HOMFLY-PT polynomial with many new variables,
\[
P_L \in \Z[a^{\pm 1}, z^{\pm 1}, A_{\pm 1}, A_{\pm 2}, \ldots].
\]
More systematically, one considers the skein module, $\C$, obtained by imposing the HOMFLY-PT relations on formal linear combinations of link diagrams.  Turaev \cite{Tu1} showed that $\C$ is a free module with linear basis consisting of products of the $A_i$. For a given monomial $A_{i_1}\cdots A_{i_N}$ we collect the terms with positive and negative indices to write   $A_{i_1}\cdots A_{i_N} = A_\lambda A_{-\mu}$ for partitions $\lambda$ and $\mu$ (see Section 4).   $P_L$ is simply a normalization of the expansion of $L$ in Turaev's basis, $\{A_\lambda A_{-\mu}\}$. Chmutov and Goryunov \cite{CG} extended the estimate (\ref{eq:FuchsTab}) to the $J^1(S^1)$ setting (see Theorem~\ref{chmutov}).  

One of our main results is the following:

\medskip

\noindent{\bf Theorem~\ref{mainT}.} \quad {\it   For any Legendrian link $L \subset J^1(S^1)$, 
\[
R^2_L(z) = \mbox{coefficient of }\, a^{-\textit{tb}(L)} \, \, \mbox{in} \, \, \widehat{P}_L(a,z). 
\]
}

Here,  $\widehat{P}_L(a,z)$ is obtained from the HOMFLY-PT polynomial by specializing the variables $A_i$ in a non-multiplicative manner as
\[
A_{\lambda}A_{ -\mu} \mapsto \langle A_\lambda, A_\mu \rangle := R^2_{A_\lambda A_{-\mu}}(z) \in \Z[a^{\pm 1}, z^{\pm 1}].
\]
$\langle \cdot, \cdot \rangle$ may be viewed as a bilinear form on the subalgebra $\C^+ \subset \C$  generated by $A_i$ with $i>0$.  Theorem~\ref{the:RulComp2} gives a computation of $\langle A_\lambda A_{-\mu} \rangle$ in terms of the partitions $\lambda$ and $\mu$ as a sum over the class of non-negative integer entry matrices with row sum $\lambda$ and column sum $\mu$.  The summands are determined by the entries of the matrix.  

In the literature, there is a traditional way of identifying $\C^+$ with the algebra, $\Lambda$, of symmetric functions (see \cite{AM}, \cite{Lu}, \cite{MM} and the discussion in Section 5.1) where the Schur functions $s_\lambda$ correspond to skein elements $Q_\lambda$.  $\Lambda$ has a standard inner product arising from taking the Schur functions as an orthonormal basis. In Section 5,  we show that the corresponding inner product, $(Q_\lambda, Q_\mu) = \delta_{\lambda, \mu}$, agrees with the bilinear form $\langle \cdot, \cdot \rangle$ used to define $\widehat{P}_L(a,z)$.

\medskip

\noindent{\bf Theorem~\ref{the:product}.} \quad {\it For any partitions $\lambda$ and $\mu$,}
\[
(A_\lambda, A_\mu) = R^2_{A_{\lambda}A_{-\mu}}(z) = \langle A_\lambda, A_\mu \rangle.
\]
\noindent Thus, with respect to Turaev's basis $A_\lambda$ the inner product on $\Lambda$ has a skein theoretic interpretation using Legendrian links.

Our method for proving Theorem~\ref{mainT} is inductive as in \cite{R}, but several interesting complications arise.  For starters, a more subtle measure of the complexity of a front diagram is required and an additional algorithm is necessary to reduce the complexity of front diagrams lacking cusps.  More notably, the base case for the induction needs to be enlarged to include all products of Legendrian versions of the $A_i$.  An interesting wrinkle occurs here.  In contrast to the case of smooth link diagrams, the product of Legendrian front diagrams in the annulus is not commutative.  This phenomenon was first observed by Traynor who showed that the two components of the Legendrian link $L = L_0 \sqcup L_1$ cannot be interchanged via a Legendrian isotopy.  Here, $L_0$ and $L_1$ denote the $1$-jets of the constant functions $0$ and $1$ on $S^1$.  In Theorem~\ref{NonCommute} we provide many further examples by showing that for any $i, j \in \Z \setminus \{0\}$ the locations of disjoint $A_i$ and $A_j$ in the $z$ direction cannot be interchanged by a Legendrian isotopy.
Nevertheless, we are able to establish in Lemma~\ref{lem:Commute} that the $2$-graded ruling polynomial of a product of the $A_i$ does not depend on the ordering of the factors.  

We've included at the end of Section 6 a proof of Chmutov and Goryunov's estimate  (Theorem~\ref{chmutov}).  The HOMFLY-PT polynomials used here and in \cite{CG} differ in a significant way (see Section 6.2).  While we believe that these two versions of $P_L$ should provide the same estimate for $\textit{tb}(L) + |r(L)|$, it is straight forward to provide a proof of Theorem~\ref{chmutov} from scratch. Our proof is based on the inductive method used in the proof of Theorem~\ref{mainT} and is similar in spirit to Ng's approach to Bennequin type inequalities in $\R^3$ \cite{Ng}.

\subsection{Acknowledgements}

I would like to thank Lenny Ng for many useful discussions during the course of this project.  Also, I am grateful to AIM for hosting a workshop on `Legendrian and transverse knots' as well as a follow up SquaREs on `Augmentations, rulings, and generating families'.   I thank fellow SquaREs participants Dmitry Fuchs, Brad Henry, Paul Melvin, Josh Sabloff, and Lisa Traynor for many discussions regarding normal rulings and related topics.  In addition, I thank Greg Kuperberg for a useful conversation about skein modules.

\section{Legendrian Links in $J^1(S^1)$}

The $1$-jet space of $S^1$, 
\[\displaystyle
J^1(S^1)  =  T^*S^1 \times \R = \{ (x,y,z) | x \in S^1,\, y,z \in \R\},
\]
is diffeomorphic to an open solid torus and is equipped with the contact structure $\xi = \mbox{ker}( dz - y \, dx )$.    A smooth (oriented) link $L \subset J^1(S^1)$ is called {\it Legendrian } if it is everywhere tangent to $\xi$.  Two Legendrian links are {\it Legendrian isotopic} if they are isotopic through other Legendrian links.  A Legendrian link $L$ is determined by its {\it front projection} (also denoted $L$) to the annulus,
\[J^1(S^1) \rightarrow S^1 \times \R   , \, (x,y,z) \mapsto (x,z),\]
because the $y$-coordinate of $L$ is recovered as the slope $dz/dx$.  Viewing $S^1$ as $[0, 1]/ \{0, 1\}$, we visualize the front projection of $L$ as a collection of arcs in $[0, 1] \times \R$ with identifications at the boundary. 

A front projection of a Legendrian link is called {\it generic} if it is immersed away from semi-cubical cusp points and the only self intersections are transverse double points.  $L$ is called {\it $\sigma$-generic} if in addition the double points and cusps all have distinct $x$-coordinates.  Any  collection of closed curves in the annulus without vertical tangencies and satisfying the conditions of a generic front projection may be lifted to a unique Legendrian link in $J^1(S^1)$.  It is not necessary to indicate the over/under relationship between two strands at a crossing of a front projection.  The $y$-axis is oriented away from the observer, so the strand with lesser slope always appears on top.  See Figure \ref{fig:FrontExample} below for an example of a front projection.  

The equivalence relation of Legendrian isotopy may be formulated in a somewhat combinatorial fashion using front projections \cite{Sw}.  Any Legendrian isotopy class has representatives with generic front projections.  Furthermore, if two generic front projections represent Legendrian isotopic links then one may be transformed into the other via a combination of  the {\it Legendrian Reidemeister moves} indicated in Figure ~\ref{fig:LRMoves} and isotopies of the plane which do not introduce vertical tangencies.

\begin{figure}
\centerline{\includegraphics[scale=.6]{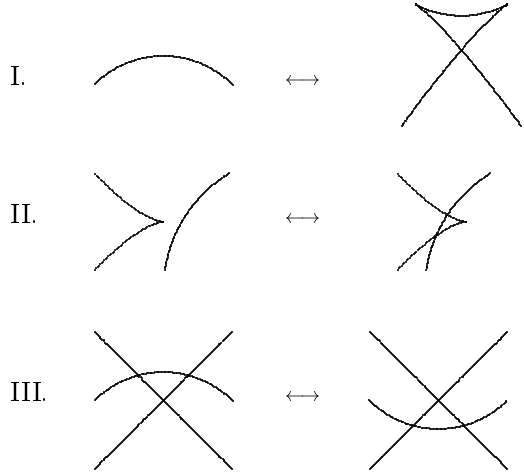}}
\caption{Legendrian Reidemeister moves.}
\label{fig:LRMoves}
\end{figure}

\subsection{Product of fronts} Given front projections $K$ and $L$ we define their product $K\cdot L$ by stacking $K$ vertically above $L$ in $S^1 \times \R$,  
\[
K\cdot L = \raisebox{-7.2ex}{\includegraphics[height=16.2ex]{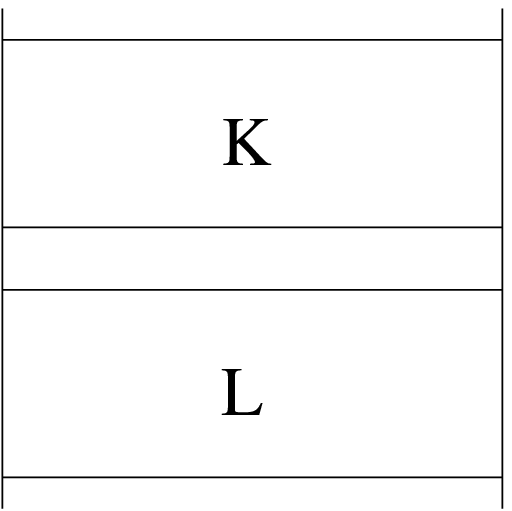}}.
\]
This product is well defined on Legendrian isotopy classes.

\begin{remark}  The corresponding product on smooth knot types is commutative.  However, it follows from work of Traynor  \cite{T1}  that this is not always the case for Legendrian knot types.  See subsection 4.1.
\end{remark}

\subsection{Classical invariants}

The simplest invariants capable of distinguishing between Legendrian links with the same underlying smooth link type are the  {\it Thurston-Bennequin number}, $\textit{tb}$, and the {\it rotation number}, $r$.  For a Legendrian link $L \subset J^1(S^1)$ the Thurston-Bennequin number is computed from a front projection of $L$ as 
\[ \textit{tb}(L) = \mbox{writhe}(L) - \frac{1}{2} (\# \, \mbox{of cusps}),
\]
and the rotation number is given by 
\[
r(L) = \frac{1}{2} ( (\# \, \mbox{of downward oriented cusps}) -  (\# \, \mbox{of upward oriented cusps})).
\]  

\begin{remark}  If $L$ is homologically trival then $\textit{tb}(L)$ is the linking number of $L$ with a link $L^+$ obtained by a small shift in the oriented normal direction to the contact planes.  There exist differing conventions for extending the definition of $\textit{tb}$ to homologically non-trivial links in $J^1(S^1)$.  We follow the definition used in \cite{NgTr} which is natural when working with front projections.  Geometrically, $\textit{tb}(L)$ is the index of intersection of $L^+$ with an oriented surface bounded by $L$ and an appropriate number of copies of $A_1$ or $A_{-1}$ (see Section 4) located far away from $L$ in the $z$-direction.  Alternatively, Tabachnikov defined in \cite{Ta} a ``Bennequin affine invariant'' for Legendrian links in $ST^*\R^2$ using instead an oriented surface bounded by $L$ and some number of distant fibers of the projection $ST^*\R^2 \rightarrow \R^2$.  Under the standard contactomorphism $ ST^*\R^2 \cong J^1(S^1) $ the front diagrams of these fibers appear as phase shifted cosine functions with large amplitudes. \cite{CG} follows the latter convention.
\end{remark}

\section{Normal rulings in $J^1(S^1)$} 

In this section we review Legendrian isotopy invariants introduced by Chekanov and Pushkar in \cite{ChP}.  The invariants depend on a choice of divisor $p | 2 r(L)$ and are computed as counts of additional combinatorial structures associated to a front diagram which we will call $p$-graded normal rulings.  This terminology follows Fuchs \cite{F} who, in connection with augmentations of the Chekanov-Eliashberg DGA, independently introduced similar combinatorial structures for front diagrams of Legendrian knots in standard contact $\R^3$.

\subsection{Maslov Potentials}

After removing cusp points a front diagram is divided into a union of immersed curves which we will call {\it strands}.  Note that along each strand the orientation of $L$ either entirely agrees or entirely disagrees with the orientation of $S^1 = [0, 1]/\{0,1\}$.  In this regard we may view the orientation of $L$ as a function from the strands of $L$ to $\Z/2\Z$ where we make the convention that the value $0$ ($1$) indicates a strand oriented to the right (left).   

\begin{definition} A {\it Maslov potential} for a generic front diagram of a Legendrian knot $L$ is a function $\mu$ from the strands of $L$ to $\Z / (2 r(L) \Z) $  so that at cusps the value increases by $1$ when moving from the lower half of the cusp to the upper half.  See Figure ~\ref{fig:Mas}.  For convenience, we require that reducing $\mu$ $ \mod 2$ gives the orientation of $L$.  A Maslov potential for a multi-component link is a choice of Maslov potential for each component.
\end{definition}

Maslov potentials are extended in an obvious way along a generic Legendrian isotopy.

\begin{figure}
\centerline{\includegraphics[scale=1]{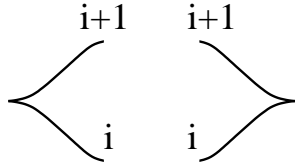}}
\caption{A Maslov potential near cusps.}
\label{fig:Mas}
\end{figure}

\subsection{ $p$-graded normal rulings }

  Suppose that $L$ is a $\sigma$-generic front projection.  Under this assumption the subset $\Sigma \subset S^1$ of $x$-values where $L$ has double points or cusps is finite and for each $x_0 \in \Sigma$ the subset $\{x= x_0 \} \subset S^1 \times \R$ intersects a single crossing or cusp of $L$. 
  
 Let $\pi : S^1\times \R \rightarrow S^1$ denote the projection and, for each $x \in S^1$,  $L_x = L \cap \pi^{-1}(x)$. 

\begin{definition} A continuous function $f$ from a subset $N \subset S^1$ to the front projection $L \subset S^1 \times \R$ is called a {\it section} if $\pi \circ f = \mbox{id}_N$.  
\end{definition}

\begin{definition}  A { \it normal ruling} of a front projection $L$ is a continuous function  $\rho : L \setminus \pi^{-1}(\Sigma) \rightarrow L \setminus \pi^{-1}(\Sigma) $ satisfying:
\begin{enumerate}

\item  $\pi \circ \rho = \pi|_{L \setminus \pi^{-1}(\Sigma)}$, so for each $x \in S^1 \setminus \Sigma$ there is a restriction $\rho_x : L_x \rightarrow L_x$.  

\item  Each $\rho_x$ is a fixed point free involution.  

This condition together with the continuity of $\rho$ implies that on any interval of $S^1 \setminus \Sigma$ the strands of $L$ are divided into pairs.  The remaining requirements give restrictions on this pairing near crossings and cusps.

\item  Strands meeting at a cusp are paired by the involutions $\rho_x$ in a neighborhood of the cusp point.  The pairing of the remaining non-cusp strands should agree before and after the cusp.  

\item   Near a crossing the two strands that meet should not be paired together by $\rho_x$.  

\item   The pairing of strands arising from $\rho_x$ can be continuously extended along a crossing in the following sense.  Let  $x_0 \in \Sigma$ such that $L_{x_0}$ contains a crossing of $L$.  In a neighborhood $N \subset S^1$ of  $x_0$ one should be able to find a number of sections $f_1,  \dots , f_n :N \rightarrow L$ so that every point of $L\cap \pi^{-1}(N)$ is in the image of exactly one of the $f_i$ with the exception of the double point which is in the image of two of the $f_i$.  Furthermore, these sections should be preserved by the involutions $\rho_x$, so that
on $N\setminus \Sigma$ for each $f_i$, $\rho \circ f_i = f_j$ for some $f_j, 1 \leq j \leq n$.  For the  two sections meeting at the crossing there are two possiblities.  Either they follow the diagram and cross transversally at the crossing, or they switch strands at the crossing in a non-smooth manner. In the latter case, the crossing is called a {\it switch} of $\rho$.  

Finally, we have a restriction at switches known as the {\it normality condition}.

\item   Near switches of $\rho$ the two intervals on the $z$-axis arising from connecting crossing strands to their companion strands are either disjoint or one is contained in the other.
\end{enumerate}
\end{definition}

\begin{remark}  Three of the six possible arrangements of the switching strands and their companions along the vertical axis are prohibited by the normality condition.  See Figure ~\ref{fig:NormC}.
\end{remark}

\begin{figure}
\centerline{\includegraphics[scale=1]{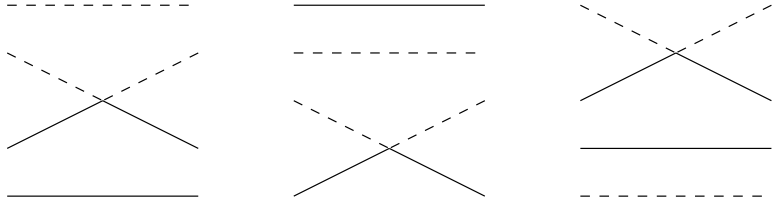}}
\caption{The three configurations for switching strands and companion strands allowed by the normality condition.}
\label{fig:NormC}
\end{figure}

Suppose that a Legendrian link $L$ has components $L_1, \ldots, L_N$, and let $p$ be a common divisor of $2r(L_i), i = 1, \ldots, N$.  

\begin{definition}
A normal ruling $\rho$ of $L$ is called {\it $p$-graded} with respect to a particular Maslov potential $\mu$ for $L$ if, after reducing $\mu$ modulo $p$, whenever two strands are paired by the involutions $\rho_x$ the strand with the larger $z$-coordinate has Maslov potential $1$ larger than the strand with smaller $z$-coordinate.  That is,
\[
\rho(x,z) = (x, z') \, \, \mbox{and} \,\, z' > z \Rightarrow \mu(x,z') = \mu(x,z) +1 \mod p.
\]
\end{definition}

\begin{remark}  (i) Every normal ruling is $1$-graded.

(ii)  We are most interested in the case $p = 2$.  Note that, a normal ruling is $2$-graded exactly when $\rho$ reverses orientation.  Choosing a Maslov potential is unnecessary.

(iii) If a normal ruling is $p$-graded, then at each of the switches the Maslov potentials of the crossing strands must agree modulo $p$.  However, in contrast to Legendrian links in $\R^3$ this condition is no longer sufficient for a normal ruling to be $p$-graded.  

(iv)  For a single component link the $p$-graded condition is independent of the choice of Maslov potential since any two Maslov potentials will differ by a constant.

(v) If $p$ is even, then the involutions $\rho_x$ reverse the orientation of $L$.  It follows that only null-homologous links can have $p$-graded normal rulings when $p$ is even.  

\end{remark}

Given a Legendrian link $L$ with $\sigma$-generic front projection and chosen Maslov potential $\mu$, we let $\Gamma^p( L, \mu)$ denote the set of normal rulings of $L$ which are $p$-graded with respect to $\mu$.  To each $\rho \in  \Gamma^p( L,\mu)$ we associate the integer\footnote{This differs by $1$ from the convention for $j(\rho)$ used in \cite{R}.}
\[
j(\rho) := \#( \mbox{switches} ) - \#( \mbox{right cusps} ).
\]
Finally, we define the {\it $p$-graded ruling polynomial}, $R^p_{L,\mu}(z)$, as
\[
R^p_{L,\mu}(z) =  \sum_{\rho \in \Gamma^p(L,\mu)} z^{j(\rho)}.
\]

As remarked above, if $p=1, 2$ or $L$ has a single component, then the choice of $\mu$ is not relevant and will be suppressed from the notation.

Given a sufficiently generic Legendrian isotopy between links $L_1$ and $L_2$ with $\sigma$-generic front projections, Chekanov and Pushkar provide a bijection between $ \Gamma^1(L_1)$ and $\Gamma^1(L_2)$ which preserves the integers $j(\rho)$.  Assuming the isotopy takes a Maslov potential $\mu_1$ for $L_1$ to the corresponding Maslov potential $\mu_2$ for $L_2$ their bijection takes $\Gamma^p(L_1, \mu_1)$ to $\Gamma^p(L_2, \mu_2)$.  

\begin{theorem}[\cite{ChP}] \label{RP}
If there is a Legendrian isotopy between $L_1$ and $L_2$ which is compatible with corresponding Maslov potentials $\mu_1$ and $\mu_2$ then
\[
R^p_{L_1,\mu_1}(z) = R^p_{L_2,\mu_2}(z).
\]
In particular, $R^1_L(z)$ and $R^2_L(z)$ are Legendrian isotopy invariants.
\end{theorem}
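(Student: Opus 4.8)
The plan is to prove Theorem~\ref{RP} by checking directly that $R^p_{L,\mu}(z)$ is unchanged under each of the local moves that generate Legendrian isotopy of $\sigma$-generic front projections: the Legendrian Reidemeister moves of Figure~\ref{fig:LRMoves}, together with planar isotopies that do not introduce vertical tangencies. For each move I would construct an explicit bijection $\Phi : \Gamma^p(L_1,\mu_1) \to \Gamma^p(L_2,\mu_2)$ and then verify that $\Phi$ preserves the exponent $j(\rho) = \#(\text{switches}) - \#(\text{right cusps})$. Summing $z^{j(\rho)}$ over $\Gamma^p$ on each side then yields the same polynomial. Since these moves, the product structure, and cyclic rotations of the annulus generate all Legendrian isotopies in $J^1(S^1)$, and since the moves are supported in a small disk, the argument is entirely local and essentially identical to the $\mathbb{R}^3$ case treated in \cite{ChP}.

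First I would dispose of planar isotopies that avoid vertical tangencies: these do not change the combinatorial data of the front --- the cyclic order of strands on each vertical slice, the set of crossings, the set of cusps --- so a normal ruling transports verbatim, and neither switches nor right cusps are affected, so $j(\rho)$ is literally unchanged. This also shows that, away from the support of a Reidemeister move, a normal ruling is determined by its restriction to the boundary circle of the local disk. Hence for each Reidemeister move the bijection $\Phi$ only needs to be specified inside the disk, matching the boundary data, and the verification is purely local.

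The moves that create or destroy a pair of cusps, create or destroy a pair of crossings, and the simpler crossing/cusp interactions are the first substantive cases. For each one I would check three things: (a) the pairing forced by conditions (3)--(6) of the definition of a normal ruling determines a unique extension into the disk for each admissible boundary configuration (or, on the more complicated side, the two possible extensions pair off with a single ruling on the simpler side); (b) the $p$-graded condition survives because the Maslov potentials of the strands participating in the move are constrained --- e.g.\ the two strands of a newly created cusp pair differ by $1$, and the hypothesis that the isotopy carries $\mu_1$ to $\mu_2$ keeps the grading coherent; and (c) the count $\#(\text{switches}) - \#(\text{right cusps})$ is balanced, typically because a move that introduces a new right cusp simultaneously forces exactly one new switch. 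The normality condition (7) is precisely what eliminates the spurious local configurations (Figure~\ref{fig:NormC}) and makes this bookkeeping come out right; I expect checking normality in each case to be the fussiest routine step.

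The genuine obstacle is the triple-point move (the Legendrian analog of Reidemeister~III) and the move in which a crossing slides past a cusp. Here the individual $p$-graded rulings on the two sides are not in an obvious one-to-one correspondence: a switch can be forced to migrate from one crossing to another, and a ruling with a switch at one crossing may match a ruling with no switch at the displaced crossing. One must argue that, after accounting for the change in the number of switches and the possible change in the number of cusps, the net exponent $j(\rho)$ is preserved and $\Phi$ remains a bijection. The way to organize this is a finite case analysis of the local pictures compatible with the normality condition and the $p$-grading; the key point is that these two constraints together leave only configurations for which the switch-versus-cusp count is manifestly balanced, so that $z^{j(\rho)}$ is preserved term by term. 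Once all moves are checked, Theorem~\ref{RP} follows. Finally, the ``in particular'' statement --- that $R^1_L$ and $R^2_L$ need no Maslov potential --- is immediate from Remark parts (i) and (ii) (every ruling is $1$-graded, and a $2$-graded ruling is just an orientation-reversing one) together with the observation that for a single-component link any two Maslov potentials differ by a constant, so the $p$-graded condition is independent of the choice.
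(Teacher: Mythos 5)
The paper does not prove this statement: Theorem~\ref{RP} is imported from Chekanov--Pushkar \cite{ChP}, and the text around it only records that \cite{ChP} produce, for a sufficiently generic isotopy, a bijection between $\Gamma^1(L_1)$ and $\Gamma^1(L_2)$ preserving $j(\rho)$ and compatible with Maslov potentials. So there is no in-paper argument to measure you against; your Reidemeister-move strategy is the standard route and is essentially the argument the paper is attributing to \cite{ChP}, adapted to the annulus by locality of the moves.

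That said, as a proof your text is a plan rather than an argument: all of the substance of the theorem lives in exactly the step you defer. For the triple-point move and the cusp--crossing move you say only that a finite case analysis of configurations allowed by normality and the $p$-grading will show the switch count is balanced and that $\Phi$ is a bijection; for the move creating a cusp pair together with a crossing (the one place where $\#(\mbox{right cusps})$ changes) you assert, without checking, that exactly one new switch is forced. These verifications are not routine bookkeeping --- they are the theorem --- and until the local tables of rulings are actually written down and matched (including the cases where a switch migrates between crossings, so the correspondence is not the ``obvious'' one), the proof is incomplete, though nothing in your outline is wrong in principle. Two smaller points: the generation statement you want is the one the paper cites from \cite{Sw} (generic fronts of Legendrian isotopic links differ by the moves of Figure~\ref{fig:LRMoves} and isotopies of the annulus without vertical tangencies); ``the product structure and cyclic rotations'' is not the right formulation. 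And in the annulus you should say explicitly that the ruling outside the disk of the move is left untouched and that the Maslov potential is transported along the isotopy, since in $J^1(S^1)$ the $p$-graded condition is a condition on the global pairing (Remark (iii)), not just on switch degrees; with those remarks the local argument does carry over.
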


\begin{example} 

\begin{figure}
\centerline{\includegraphics[height=2in]{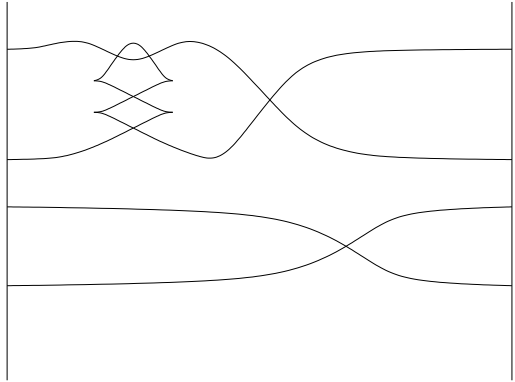}}
\caption{An annular front projection for a Legendrian link $K \subset J^1(S^1)$. } 
\label{fig:FrontExample}
\end{figure}

\begin{figure}
\centerline{\includegraphics[height=2in]{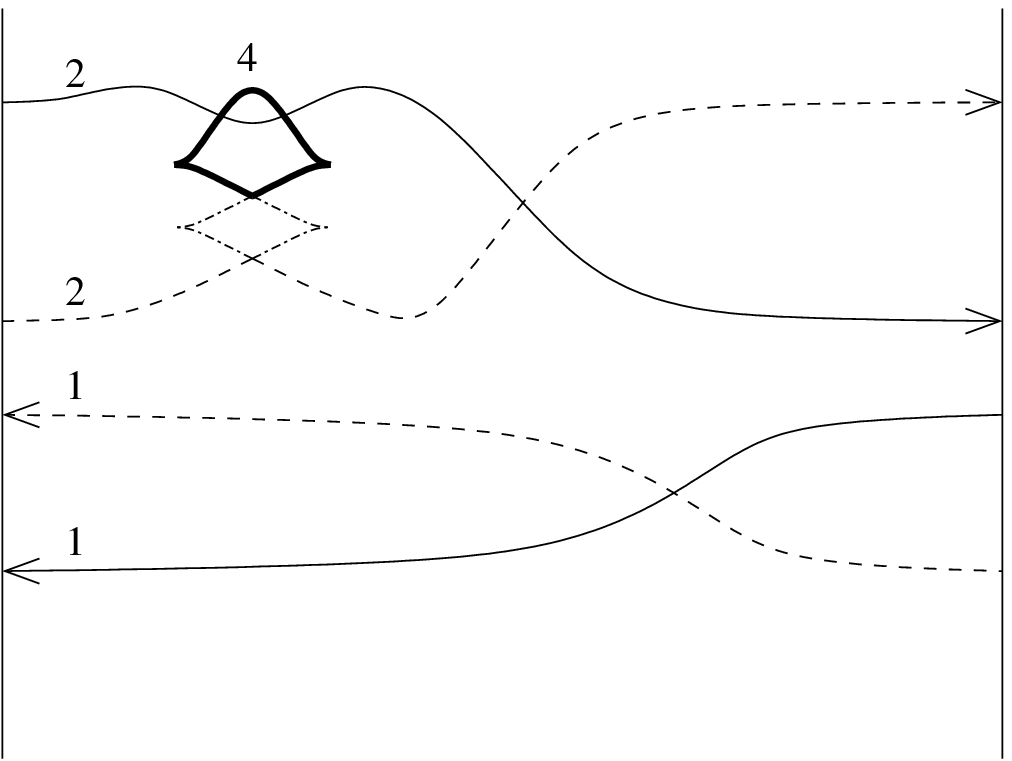} \,\,\,
\includegraphics[height=2in]{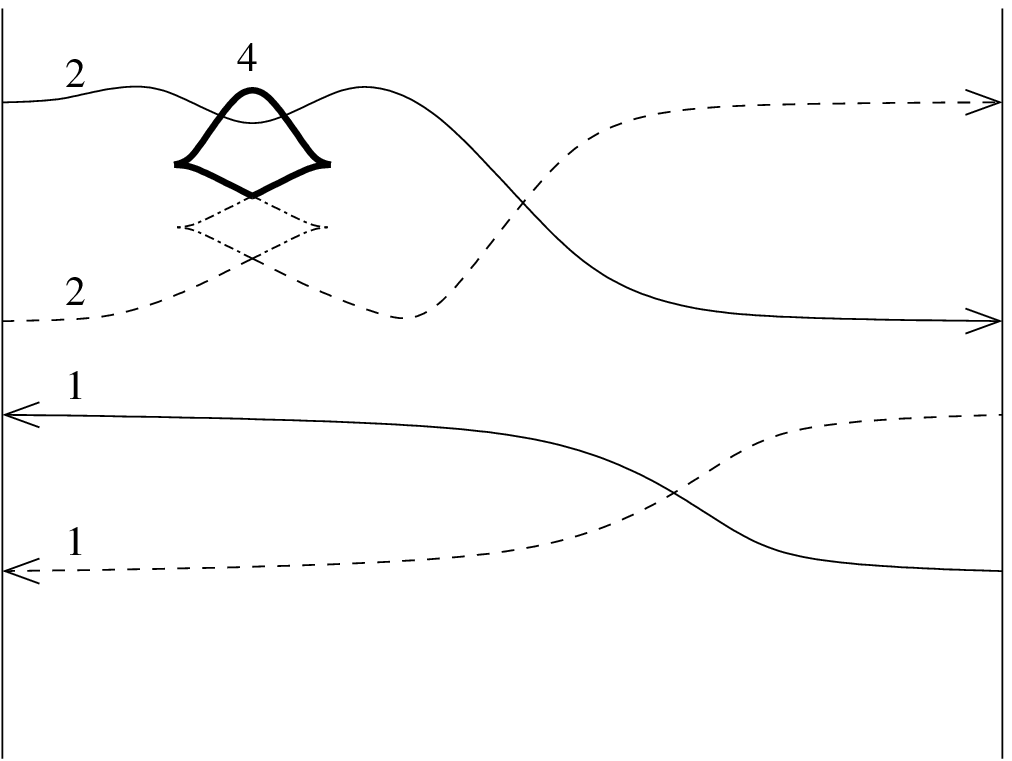}}
\caption{Two normal rulings of the front projection $K$. } 
\label{fig:RulExample}
\end{figure}

In Figure \ref{fig:FrontExample} a Legendrian link $K \subset J^1(S^1) $ is presented via its front projection to $S^1\times \R$.  Two normal rulings of $K$ are pictured in Figure \ref{fig:RulExample}.  Both of the rulings are $0$-graded with respect to the indicated Maslov potential, $\mu$.  In case the value of $\mu$ on the lower component were altered to $3$ the pictured rulings would remain $2$-graded but would fail to be $0$-graded.  $K$ has several other normal rulings, and  its $2$-graded and $0$-graded ruling polynomials are given by
\[
R^2_K(z) = 2 + 3 z^2 + z^4, \, \,\, R^0_{K, \mu}(z) = 2 + z^2.
\]
\end{example}

\section{Computation of $R^2$ for products of basic fronts}

For each positive integer $m \geq 1$, we consider the front diagram $A_m$ which consists of a single component wrapping $m $ times around the annulus with $m-1$ crossings.  $A_m$ is everywhere oriented to the right in $S^1 \times \R$ and can be viewed as the closure of the $m$-braid, $\sigma_1 \sigma_2 \ldots \sigma_{m-1}$.  Here, we compose braids from left to right and number strands from top to bottom.  See Figure ~\ref{fig:BasisKnots}.  We let $A_{-m}$ denote $A_m$ with its orientation reversed.

\begin{figure}
\centerline{
\includegraphics[scale=.8]{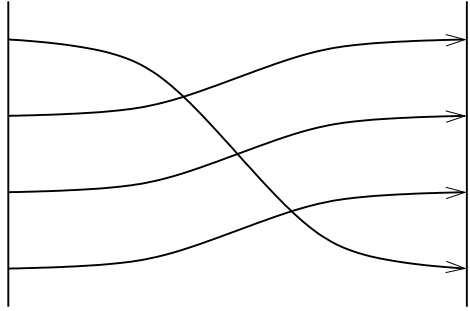}
}
\caption{The front diagram $A_4$.
}
\label{fig:BasisKnots}
\end{figure}

We will sometimes refer to the front diagrams $A_m$ as {\it basic fronts}.  The basic fronts will play a crucial role as their products form a basis for the HOMFLY-PT skein module of the annulus (See Section 5).  

Recall that a finite non-increasing sequence of positive integers $\lambda = (\lambda_1, \ldots, \lambda_\ell),$ $ \lambda_i  \geq \lambda_{i+1}, 1 \leq  i \leq \ell-1$ is called a {\it partition}.  If $\sum \lambda_i = n$ we say that $\lambda$ is a partition of $n$ and write $\lambda \vdash n$.  The integers $\lambda_i, 1 \leq i \leq \ell$ are called the parts of $\lambda$, and we write $\lambda = 1^{m_1} 2^{m_2}  \cdots r^{m_r}$ to indicate that $\lambda$ is the partition with $m_k$ parts equal to $k$, $1 \leq k \leq r$, and no part larger than $r$.   The total number of parts, $\ell= \ell(\lambda) = m_1 + \cdots + m_r$, is called the length of $\lambda$.  

\begin{lemma}
\label{lem:RulComp1}
For any $m \geq 1$, 
$$R^2_{A_m A_{-m} } (z) =  \sum_{\lambda \vdash m} \left ( \frac{\ell(\lambda)!}{m_1!m_2!\cdots m_r! } \right ) ( 1^{m_1} 2^{m_2} \cdots r^{m_r}) z^{2(\ell(\lambda) -1)}. $$
\end{lemma}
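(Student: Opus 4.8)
The plan is to enumerate the $2$-graded normal rulings of $A_mA_{-m}$ directly and read off $j(\rho)$ for each. Since $A_mA_{-m}$ has no cusps, $j(\rho)$ is simply the number of switches of $\rho$, so it suffices to sort the rulings by switch count. I would first fix the convenient $\sigma$-generic front in which all $m-1$ crossings of $A_m$ (forming the braid word $\sigma_1\sigma_2\cdots\sigma_{m-1}$) occur before all $m-1$ crossings of $A_{-m}$. Over a generic $x$ the $m$ strands of $A_m$ all lie above the $m$ strands of $A_{-m}$, and, a $2$-graded ruling being orientation reversing, it matches each $A_m$-strand with an $A_{-m}$-strand; thus $\rho$ is recorded over each such $x$ by a permutation $\phi_x\in S_m$, where the strand of $A_m$ at height $i$ (heights numbered from the top) is matched with the strand of $A_{-m}$ at height $\phi_x(i)$.

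Next I would track how $\phi_x$ changes along the front. It is locally constant away from crossings. At a crossing of $A_m$ interchanging the $A_m$-heights $k$ and $k+1$: if it is not a switch then $\phi_x$ is replaced by $\phi_x\circ(k,k+1)$, while if it is a switch then $\phi_x$ is unchanged but the normality condition must hold, and a short check shows that normality at such a switch is equivalent to the descent condition $\phi_x(k)>\phi_x(k+1)$ (the two companion intervals on the $z$-axis always overlap here, and are nested precisely when this holds). Symmetrically, a crossing of $A_{-m}$ interchanging $A_{-m}$-heights $j$ and $j+1$ either replaces $\phi_x$ by $(j,j+1)\circ\phi_x$ or, if it is a switch, leaves $\phi_x$ unchanged subject to $\phi_x^{-1}(j)>\phi_x^{-1}(j+1)$. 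Hence a $2$-graded normal ruling is exactly the data of a choice of which crossings are switches together with the permutation $\phi_0$ occurring just before the first crossing, subject to the closure condition that $\phi_x$ return to $\phi_0$ after going once around $S^1$ and to the descent condition at every switch.

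The heart of the argument is the classification of this data. If the switched crossings of $A_m$ sit at the positions $c_1 < c_1+c_2 < \cdots < c_1+\cdots+c_{\ell-1}$, then the unswitched ones contribute to $\phi$ the product $w_a$ of the corresponding adjacent transpositions, which is automatically a product of $\ell$ disjoint ``block'' cycles, the $i$-th of length $c_i$ on the heights $c_1+\cdots+c_{i-1}+1,\dots,c_1+\cdots+c_i$; likewise the switched crossings of $A_{-m}$ produce a permutation $w_b$ that is a product of disjoint block cycles. The closure condition reads $w_b\phi_0 w_a=\phi_0$, so $w_b$ is conjugate to $w_a^{-1}$; comparing cycle types shows that the two block-length multisets coincide, so there are $\ell-1$ switched crossings in $A_m$ and $\ell-1$ in $A_{-m}$, whence $j(\rho)=2(\ell-1)$. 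I would then use the descent conditions to show that the switched crossings of $A_{-m}$ are forced to mirror those of $A_m$, that the blocks must be arranged in a single nested tower along the $z$-axis, and that, among the $|Z(w_a)|=\prod_k k^{m_k}m_k!$ permutations $\phi_0$ solving the closure equation, exactly $\prod_i c_i = 1^{m_1}2^{m_2}\cdots r^{m_r}$ survive the descent conditions — the surviving freedom being one cyclic offset per block, the factor $\prod_k m_k!$ dying because the nesting linearly orders the blocks. This puts the $2$-graded normal rulings in bijection with pairs consisting of a composition $(c_1,\dots,c_\ell)$ of $m$ and a choice of cyclic offset in each block, every such ruling having $j(\rho)=2(\ell-1)$.

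Finally I would assemble the count. The compositions of $m$ whose parts form a fixed partition $\lambda=1^{m_1}2^{m_2}\cdots r^{m_r}$ number $\ell(\lambda)!/(m_1!m_2!\cdots m_r!)$, each carrying $1^{m_1}2^{m_2}\cdots r^{m_r}$ offset choices, and each contributing $z^{2(\ell(\lambda)-1)}$; summing over $\lambda\vdash m$ yields the stated formula. I expect the classification of the third paragraph to be the main obstacle — specifically, proving that the descent conditions permit switched crossings only at matching block boundaries and cut the $\phi_0$-freedom down to precisely the per-block cyclic offsets. I would establish this by an induction along the front, repeatedly peeling off the innermost (or outermost) block and checking that the descent conditions are exactly what is needed to continue.
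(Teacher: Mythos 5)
Your proposal is correct and arrives at exactly the same classification as the paper --- $2$-graded rulings of $A_mA_{-m}$ correspond to a composition $(c_1,\dots,c_\ell)$ of $m$ together with one cyclic offset per part, each such ruling having $j(\rho)=2(\ell-1)$ --- but by a genuinely different technical route. The paper resolves every switch into a pair of horizontal arcs, so a ruling becomes a switchless ruling of a product front; normality then forces this front to be $(A_{i_1}\cdots A_{i_\ell})(A_{-i_\ell}\cdots A_{-i_1})$ with the factors paired in opposite order, and the count reduces to the easy observation that $A_kA_{-k}$ has exactly $k$ switchless rulings. You instead keep the original diagram and encode the ruling as the permutation-valued function $\phi_x$, with transposition jumps at unswitched crossings and descent conditions at switches; your translation of normality into the descent inequalities is correct (the two companion intervals always overlap here, and nesting is precisely the stated inequality), and the closure/conjugacy argument correctly forces equal block-length multisets and hence $j(\rho)=2(\ell-1)$ (note a small slip: $w_b$ should be built from the \emph{unswitched} crossings of $A_{-m}$). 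What your model buys is a very explicit combinatorial description and a clean separation of the closure equation (a coset of $Z(w_a)$, of size $\prod_k k^{m_k}m_k!$) from the normality constraints; what it costs is that the key classification --- that the descents force the $A_{-m}$ switch positions to mirror those of $A_m$, force the nested opposite-order pairing of blocks, and cut the coset down to the $\prod_i c_i$ per-block offsets --- still has to be proved, and you only sketch the peeling induction for it. That induction does go through (it is essentially a permutation-language rerun of the paper's ``normality forces the block form'' step, and your counts check out on small cases), so the plan is sound, but its hardest step is exactly the one the paper's resolution-of-switches trick dispatches almost immediately.
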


\begin{proof}

First, note that there are precisely $m$ normal rulings of $A_m A_{-m}$ which have no switches.  The continuity conditions required in the definition of normal ruling show that such a ruling is uniquely determined by the value of the involution on a single strand  of $A_m$ near $x=0$.  Furthermore, an arbitrary choice of this value among the strands of $A_{-m}$ may always be extended to a ruling without switches.  

Now, given a ruling $\rho$ of $A_m A_{-m}$ consider the front diagram, $F_\rho$, arising from resolving the switches of $\rho$ into pairs of horizontal arcs as 
\[
\fig{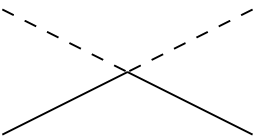} \rightarrow \fig{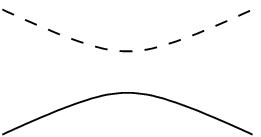}.
\]  $\rho$ gives rise to a normal ruling of $F_\rho$ without switches, and the normality condition forces that 
\[
F_\rho = (A_{i_1}\cdots A_{i_\ell} )(A_{-i_\ell}\cdots A_{-i_1} ), \,\,\, i_1 + \cdots + i_{\ell} = m
\]
with the induced ruling on $F_\rho$ pairing the factors on the left with those on the right in opposite order.  Conversely, any such choice of decomposition $m = i_1 + \cdots + i_{\ell}, i_1,\ldots,i_\ell >0$  and  switchless rulings for $ A_{i_j}A_{-i_j}$, $1 \leq j \leq \ell$, arises in this way from a unique ruling of $A_mA_{-m}$.  The terms in the decomposition $m = i_1 + \cdots + i_{\ell}$ may be reordered to give a partition $\lambda \vdash m$.  In the statement of the lemma, the first term in the sum is the number of ways to rearrange the parts of $\lambda$ to produce 
$ (i_1, \ldots, i_{\ell})$ and the second term $( 1^{m_1} 2^{m_2} \cdots r^{m_r})$ accounts for the choices of switchless rulings.  Finally, the number of switches in a ruling described by this data is $2(\ell(\lambda) - 1)$ which explains the power of $z$.

\end{proof}

For $m \geq 1$ we introduce the notation 
\[
\langle m \rangle = R^2_{A_m A_{-m}}(z),
\]
 and in accordance with Lemma ~\ref{lem:RulComp1} we set
\[
\langle 0 \rangle = z^{-2}.
\]

Next we extend our computations of $R^2$ to products of the $A_m$.  Given partitions  $\lambda = (\lambda_1, \ldots, \lambda_\ell)$ and $\mu = (\mu_1, \ldots, \mu_k)$ we let $A_\lambda$ and $A_{-\mu}$ denote the products
\[
A_\lambda = A_{\lambda_1} \cdots A_{\lambda_{\ell}} \mbox{ and}\, 
A_{-\mu} = A_{-\mu_1} \cdots A_{-\mu_{k}}.
\] 

\begin{theorem} 
\label{the:RulComp2} 
Let $\lambda, \mu \vdash n$ with $\lambda = (\lambda_1, \ldots, \lambda_\ell)$ and $\mu = (\mu_1, \ldots, \mu_k)$.  Denote by $M_{\lambda, \mu}$ the set of $\ell \times k$ matrices with non-negative integer entries such that the entries in the $i$-th row sum to $\lambda_i$ and the entries of the $j$-th column sum to $\mu_j$.  Then,
$$
\displaystyle
R^2_{A_\lambda A_{-\mu}}(z) = z^{2 \ell k - \ell - k} \sum_{(b_{ij}) \in M_{\lambda, \mu} } \prod_{i, j} \langle b_{ij} \rangle.
$$
\end{theorem}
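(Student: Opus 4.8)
The plan is to mimic the proof of Lemma~\ref{lem:RulComp1} by setting up a bijection between the $2$-graded normal rulings $\rho$ of $A_\lambda A_{-\mu}$ and the data consisting of a matrix $(b_{ij})\in M_{\lambda,\mu}$ together with, for each nonzero entry $b_{ij}$, a choice of $2$-graded normal ruling $\rho_{ij}$ of $A_{b_{ij}}A_{-b_{ij}}$; one then reads off how $j(\rho)$ decomposes under this bijection. Two easy preliminaries make this work. Since $A_\lambda$ and $A_{-\mu}$ are closures of positive braids, the diagram $A_\lambda A_{-\mu}$ has no cusps, so $j(\rho)$ is simply the number of switches of $\rho$. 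And a $2$-graded ruling reverses orientation, so $\rho$ must pair each rightward strand (all of which lie in $A_\lambda$) with a leftward strand (all lying in $A_{-\mu}$).

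Given such a $\rho$, let $b_{ij}$ be the number of strands of the factor $A_{\lambda_i}$ whose $\rho$-partner lies in the factor $A_{-\mu_j}$; counting strands of $A_{\lambda_i}$ and of $A_{-\mu_j}$ shows the $i$-th row of $(b_{ij})$ sums to $\lambda_i$ and the $j$-th column sums to $\mu_j$, so $(b_{ij})\in M_{\lambda,\mu}$. The crux of the argument — and the step I expect to be the main obstacle — is a local analysis at the crossings of the spirals $A_{\lambda_i}$ and $A_{-\mu_j}$, parallel to the part of the proof of Lemma~\ref{lem:RulComp1} that pins down $F_\rho$: labeling each strand of $A_{\lambda_i}$ by the index $j$ of the factor containing its partner, one shows (via the continuity of $\rho$ and the normality condition) that a crossing of $A_{\lambda_i}$ whose two strands carry different labels must be a switch, and that normality forces the strand with the larger $z$-coordinate near that crossing to be partnered into the factor $A_{-\mu_j}$ lying further down. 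Granting this, the strands of $A_{\lambda_i}$ split into consecutive blocks of sizes $\{b_{ij}\}_j$; the $N_i-1$ crossings joining distinct blocks are all switches, where $N_i=\#\{j:b_{ij}>0\}$, and similarly each $A_{-\mu_j}$ contributes $M_j-1$ such ``block'' switches with $M_j=\#\{i:b_{ij}>0\}$. Resolving these $\sum_i(N_i-1)+\sum_j(M_j-1)=2N-\ell-k$ switches, where $N=\#\{(i,j):b_{ij}>0\}=\sum_i N_i=\sum_j M_j$, breaks the diagram into one copy of $A_{b_{ij}}A_{-b_{ij}}$ for each nonzero $b_{ij}$, each carrying an orientation-reversing, hence $2$-graded, normal ruling $\rho_{ij}$, the remaining switches of $\rho$ being distributed among the $\rho_{ij}$.

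For the reverse map I would start from $(b_{ij})\in M_{\lambda,\mu}$ and rulings $\rho_{ij}$, reconnect inside each $A_{\lambda_i}$ the sub-spirals $A_{b_{ij}}$ in the unique order compatible with normality while declaring the connecting crossings to be switches, do likewise inside each $A_{-\mu_j}$, and pair strands as prescribed by the $\rho_{ij}$; checking that the result is a normal ruling again comes down to verifying the normality condition at the $2N-\ell-k$ reconnection switches, which holds by the monotone block ordering exactly as in Lemma~\ref{lem:RulComp1}. Uniqueness of this ordering makes the correspondence a bijection, under which $j(\rho)=(2N-\ell-k)+\sum_{b_{ij}>0}j(\rho_{ij})$. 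Summing $z^{j(\rho)}$ over all $\rho$ and factoring the inner sum then gives
\[
R^2_{A_\lambda A_{-\mu}}(z)=\sum_{(b_{ij})\in M_{\lambda,\mu}} z^{2N-\ell-k}\prod_{b_{ij}>0}\langle b_{ij}\rangle ,
\]
and finally the convention $\langle 0\rangle=z^{-2}$ lets me rewrite $z^{2N-\ell-k}=z^{2\ell k-\ell-k}\,z^{-2(\ell k-N)}=z^{2\ell k-\ell-k}\prod_{b_{ij}=0}\langle b_{ij}\rangle$, so that the product runs over all entries $(i,j)$ and the stated formula results.
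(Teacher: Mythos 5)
Your proposal is correct and follows essentially the same route as the paper's proof: decompose each $A_{\lambda_i}$ into blocks according to which factor $A_{-\mu_j}$ the partner strands lie in, use the normality condition to force that inter-block crossings are switches with the block order determined (upper strands paired into lower factors), resolve these switches to obtain copies of $A_{b_{ij}}A_{-b_{ij}}$, and do the same bookkeeping of the $2N-\ell-k$ inter-block switches together with the convention $\langle 0\rangle=z^{-2}$ to get the prefactor $z^{2\ell k-\ell-k}$. The step you flag as the crux is treated at the same level of detail in the paper itself, so no further comparison is needed.
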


\begin{proof}
Let $\rho$ be a normal ruling of $A_\lambda A_{-\mu}$.  Divide each term $A_{\lambda_i}$ into `blocks' $B_{ij}$ where the block $B_{ij}$ denotes the closure of the portion of $A_{\lambda_i}$ paired with $A_{-\mu_j}$ by $\rho$.  
Distinct blocks can meet only at switches.  The normality condition forces that if two blocks $B_{ij}$ and $B_{ik}$ meet at a switch with $B_{ik}$ containing the upper half of the switching strands and $B_{ij}$ the lower half then $j<k$.  It follows that, after resolving the switches between distinct blocks into horizontal lines, $A_{\lambda_i}$ becomes a product
$A_{b_{ik}} \cdots A_{b_{i2}} A_{b_{i1}}$ with the factor $A_{b_{ij}}$ corresponding to the block $B_{ij}$. (If the block $B_{i,j}$ is empty then we put $b_{ij}= 0$ and treat $A_0$ as an identity.)  Clearly, 
$b_{i1}+b_{i2}+\cdots+ b_{ik} = \lambda_i$.  Since the term $A_{-\mu_j}$ is the union of the closures of the images under $\rho$ of blocks $B_{ij}$, we have also that $b_{1j}+b_{2j}+\cdots+ b_{\ell j} = \mu_j$.  Therefore, $(b_{ij})\in M_{\lambda, \mu} $.  Notice that the ordering of $\rho(B_{ij} )$ along the $z$-axis is likewise forced by the normality condition.  Also, for each $b_{ij}$, $\rho$ gives rise to a normal ruling of the front diagram comprised of the $A_{b_{ij}}$ factor of $A_{\lambda_i}$ and the corresponding portion of $A_{-\mu_j}$ which may be viewed as $A_{-b_{ij}}$.   

Conversely, a decomposition of each $A_{\lambda_i}$ into $A_{b_{ik}} \cdots A_{b_{i2}} A_{b_{i1}}$ and each $A_{-\mu_j}$ into $A_{-b_{\ell j}} \cdots A_{-b_{2j}} A_{-b_{1j}}$ with $(b_{ij}) \in M_{\lambda, \mu}$ together with for each $b_{ij}$ a choice of normal ruling for $A_{b_{ij}}A_{-b_{ij}}$ gives a unique ruling of $A_\lambda A_{-\mu}$.  This justifies the terms in the summation.  Notice that in each $A_{\lambda_i}$ there are $k-1- \#\{j | b_{ij}= 0\}$ switches between distinct blocks.  Similarly,  in each $A_{-\mu_j}$ there are $\ell-1- \#\{i | b_{ij}= 0\}$ switches between the images of distinct blocks.  Combined, these switches account for the $ z^{2 \ell k - \ell - k}$ term in front of the sum and the power of $z$ arising from the product of $\langle b_{ij} \rangle$ with $b_{ij} =0$.  Remaining switches are accounted for in the  $\langle b_{ij} \rangle$ with $b_{ij} \neq 0$ which correspond to the choices of rulings for each  $A_{b_{ij}}A_{-b_{ij}}$.
\end{proof}

\subsection{Distinguishing $A_mA_n$ and $A_n A_m$ using $0$-graded rulings.}

The product on smooth knot types arising from stacking knot diagrams is commutative.  However, the corresponding statement in the Legendrian setting fails to be true.  For instance, using generating family methods Traynor \cite{T1} showed that it is not possible to interchange the two components of the Legendrian link $A_1 A_1$ via a Legendrian isotopy.  Using $0$-graded ruling polynomials we are able to provide a generalization of Traynor's result.

\begin{theorem} 
\label{NonCommute}
 Given non-zero integers $m$ and $n$ it is impossible to interchange the positions of the components of $A_m A_n$  via a Legendrian isotopy.  In particular, if $m \neq n$ then $A_m A_n$  is not Legendrian isotopic to $A_n A_m$.
\end{theorem}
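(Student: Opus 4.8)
The plan is to use the $0$-graded ruling polynomial, which --- in contrast to $\textit{tb}$, the rotation number, the HOMFLY-PT polynomial, and even the $2$-graded ruling polynomial (Lemma~\ref{lem:Commute}) --- genuinely depends on the choice of Maslov potential and can therefore register which component of a stacked link lies on top. In each case I will produce a Maslov potential for which the relevant $0$-graded ruling polynomial is non-zero, and observe that transporting it across a hypothetical position-interchanging Legendrian isotopy yields a Maslov potential for which the corresponding polynomial vanishes; by Theorem~\ref{RP} this is a contradiction.

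I would treat first the model case $n=-m$, where one compares the null-homologous links $A_mA_{-m}$ and $A_{-m}A_m$. Equip $A_mA_{-m}$ with the Maslov potential $\mu$ having value $0$ on the right-oriented component $A_m$ and $-1$ on $A_{-m}$. Since in this front every strand of $A_m$ lies above every strand of $A_{-m}$, for an orientation-reversing ruling the $0$-graded requirement ``the higher strand has Maslov potential one larger'' reads $0=-1+1$ and so holds automatically; hence $R^0_{A_mA_{-m},\mu}(z)=R^2_{A_mA_{-m}}(z)=\langle m\rangle$, which by Lemma~\ref{lem:RulComp1} has constant term $m>0$ and is in particular non-zero. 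A Legendrian isotopy interchanging the two components of $A_mA_{-m}$ would carry $\mu$ to the Maslov potential $\mu'$ on $A_{-m}A_m$ with value $-1$ on the now-upper component $A_{-m}$ and $0$ on the now-lower component $A_m$; for $\mu'$ a $0$-graded ruling would pair each (upper) $A_{-m}$-strand with a (lower) $A_m$-strand and demand $-1=0+1$, so $\Gamma^0(A_{-m}A_m,\mu')=\emptyset$ and $R^0_{A_{-m}A_m,\mu'}(z)=0$. Theorem~\ref{RP} then forces $\langle m\rangle=0$, a contradiction.

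For arbitrary non-zero $m$ and $n$ I would run the same scheme after attaching to $A_mA_n$ a product of negatively-oriented basic fronts so as to obtain a null-homologous auxiliary link. After rescaling $A_mA_n$ into a thin horizontal slab (a contactomorphism of $J^1(S^1)$), a position-interchanging isotopy is supported in that slab, hence extends to the auxiliary link and carries Maslov potentials along, and Theorem~\ref{RP} again equates two $0$-graded ruling polynomials. These are computed by the block-decomposition method from the proofs of Lemma~\ref{lem:RulComp1} and Theorem~\ref{the:RulComp2}, refined to track Maslov potentials: a $0$-graded ruling still amounts to a non-negative integer matrix with prescribed row and column sums together with a $0$-graded ruling of each $A_{b_{ij}}A_{-b_{ij}}$, but now the grading condition --- together with the normality condition applied to switches lying inside a negatively-oriented factor, between the portions paired with two distinct positively-oriented factors --- restricts which matrices occur, and these restrictions are controlled by the relative vertical positions of the factors. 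The auxiliary link and Maslov potential are to be chosen, depending on $m$ and $n$, so that an admissible matrix exists for one ordering of the two positively-oriented factors but not for the other, whence a non-zero polynomial is again equated with $0$. The case $m=n$, which asserts that $A_mA_m$ has no self-isotopy exchanging its two components, is handled in the same way, comparing the $0$-graded ruling polynomials of a null-homologous stabilization of $A_mA_m$ under the two Maslov potentials differing by interchanging the constants assigned to the two $A_m$-factors.

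The main obstacle is the general case: producing, for each $(m,n)$, an auxiliary link and Maslov potential for which the $0$-graded ruling count is genuinely order-sensitive, and establishing the resulting asymmetry. The $0$-grading/normality bookkeeping is substantially more delicate in $J^1(S^1)$ than in $\R^3$ --- as noted in the remarks following the definition of $p$-graded ruling, agreement of Maslov potentials at a switch is no longer enough for a ruling to be graded --- so the heart of the argument is a careful analysis of how normality interacts with the grading inside the negatively-oriented factors. A secondary and routine point is checking that a position-interchanging Legendrian isotopy of $A_mA_n$ really does extend, compatibly with Maslov potentials, over the added factors, which the rescaling argument takes care of.
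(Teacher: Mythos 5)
Your overall strategy --- $0$-graded ruling polynomials relative to a Maslov potential, made applicable by adjoining extra components to reach a null-homologous link, and invariance via Theorem~\ref{RP} --- is the same as the paper's, but as written there are two genuine gaps. The first is the step where you transport the Maslov potential: you assert that an interchanging isotopy carries the potential $(0,-1)$ on $A_mA_{-m}$ to the potential taking the values $-1$ and $0$ on the now-upper $A_{-m}$ and now-lower $A_m$. This is not automatic. The potential is extended along the isotopy, during which cusps are born and cancelled and points of the (eventually cusp-free) components pass through them, so the constant value on a component could a priori shift by an even integer by the end. If, say, the value on $A_{-m}$ arrived at $+1$ while $A_m$ stayed at $0$, then $A_{-m}A_m$ with the transported potential would again admit $0$-graded rulings and no contradiction would result. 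The paper isolates exactly this point as Lemma~\ref{lem:MasPo} and proves it by a further ruling-polynomial argument (adjoin a distant copy of $A_{-m}$ with potential one lower; the $0$-graded polynomial of the resulting link equals $\langle m \rangle \neq 0$, which pins down the constant). Your argument uses the conclusion of that lemma without proof, and it is the linchpin even of your model case $n=-m$.

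The second gap is the one you flag yourself: for general nonzero $(m,n)$ you leave the choice of auxiliary link and Maslov potential, and the proof that the resulting $0$-graded count is order-sensitive, as ``the main obstacle.'' As it stands the proposal establishes the theorem only for $n=-m$, modulo the first gap. For comparison, the paper's resolution is short and needs none of the refined normality/grading bookkeeping you anticipate: reversing the isotopy and/or reversing orientations reduces to the case $m>0>n$ with $|m|\geq |n|$; one then adjoins a single component $A_{-m-n}$ far below (no extra component is needed when $m+n=0$) and assigns potential $1$ to the right-oriented component $A_m$ and $0$ to the two left-oriented components. Before the interchange every strand of $A_m$ lies above all left-oriented strands, so $0$-graded rulings exist (described as in Theorem~\ref{the:RulComp2}); after the interchange the strands of $A_n$ lie above $A_m$ yet, being left-oriented, must be paired with strands of $A_m$, which violates the grading, so the polynomial vanishes. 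Combined with Lemma~\ref{lem:MasPo} applied componentwise this yields the contradiction immediately, with no matrix analysis required.
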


\begin{lemma}  
\label{lem:MasPo} 
Suppose $L_t$, $0 \leq t \leq 1$ is a Legendrian isotopy, so that $L_0 = A_m$ and $L_1$ is a translation of $A_m$ along the $z$-axis.  If $\mu_0$ is a Maslov potential for $L_0$ taking the value $\mu_0 \equiv k \in \Z$ and $\mu_0$ is extended during the isotopy to $\mu_t$, $0 \leq t \leq 1$, then $\mu_1 \equiv k$.
\end{lemma}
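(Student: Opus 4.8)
The plan is to track the Maslov potential along the isotopy by reducing to a statement about the homotopy class of the Gauss map of the front, since the Maslov potential is essentially a combinatorial incarnation of the rotation/winding data of $L_t$. First I would recall that a Maslov potential is only determined up to an overall additive constant on each component; since $A_m$ is connected, $\mu_0$ is globally determined by the single value $k$. The content of the lemma is therefore that the Maslov potential cannot ``drift'' by a nonzero constant over the course of the isotopy even though the knot returns (up to $z$-translation) to its starting position. The key point is that $A_m$ wraps $m$ times around the annulus while remaining everywhere oriented to the right, so $r(A_m)=0$ and the Maslov potential takes values in $\Z/(2r(A_m)\Z) = \Z$; thus there is no modular ambiguity and a well-defined integer is being transported.

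Next I would make the transport precise. Choose a generic point $x_* \in S^1$ and a strand $s_0$ of $L_0$ over $x_*$; following it along a generic $L_t$ gives a continuous family of strands $s_t$, and $\mu_t(s_t)$ is the integer obtained by extending $\mu_0$ across the finitely many Reidemeister moves and cusp births/deaths occurring during the isotopy. The Chekanov--Pushkar extension rules for Maslov potentials across the Legendrian Reidemeister moves are local and preserve the potential on the strands not involved in the move, and for the cusp moves the newly created strands differ by $1$ in accordance with the cusp convention. Hence $\mu_t(s_t)$ is constant in $t$ as long as $s_t$ is never destroyed; at a cusp death involving $s_t$ one passes to an adjacent strand, but the value one lands on is again determined by the cusp rule and matches. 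The upshot is that $\mu_1(s_1)$ equals $\mu_0(s_0) = k$. Since $L_1$ is a $z$-translate of $A_m$, which is connected, $\mu_1$ is the constant potential with that value, so $\mu_1 \equiv k$.

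An alternative and perhaps cleaner route avoids strand-tracking entirely: observe that $\mu_t$ restricted to any strand can be recovered from winding-number data. Namely, parametrize the Legendrian knot $L_t$ by the circle; its tangent direction in the front gives a map $S^1 \to S^1$ (via the front's Gauss map, which records ascending/descending and the $dz/dx$ slope), and the Maslov potential of a point, normalized by $\mu_t(s_t)=\mu_0(s_0)$ at the basepoint, is given by adding $1$ each time the front passes through an upward cusp and subtracting $1$ at a downward cusp as one traverses the knot. Because the knot is connected and the isotopy is through closed Legendrian curves, this assignment varies continuously and integrally in $t$, hence is constant; evaluating at $t=0$ and $t=1$ gives the claim. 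The main obstacle in either approach is purely bookkeeping: one must check that the extension of $\mu$ across each of the Legendrian Reidemeister moves of Figure~\ref{fig:LRMoves}, and across cusp births and deaths, is consistent with the cusp convention and does not introduce a shift — this is routine but must be verified move by move. Once that compatibility is in hand, the connectedness of $A_m$ and the vanishing of its rotation number do the rest.
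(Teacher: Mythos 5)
There is a genuine gap, and it sits exactly where your argument says ``hence is constant.'' Both of your routes use only soft data: local consistency of the extension rules across Reidemeister moves and cusp births/deaths, and winding/Gauss-map data of the front. But the lemma is a global monodromy statement: after a long chain of moves returning the front to (a translate of) $A_m$, the extended potential is again constant, and the question is whether that constant has drifted by a (necessarily even) integer. Nothing in the local rules forbids this: the value of $\mu_t$ on ``the strand through a fixed material point'' is \emph{not} continuous in $t$ (it jumps by $\pm 1$ each time a cusp sweeps past that point), strands themselves are born and die, and when a tracked strand dies in a cusp cancellation the adjacent strand differs from it by $1$ — so the claim that ``the value one lands on \ldots matches'' is unjustified. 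Likewise, in the second route the normalization ``$\mu_t(s_t)=\mu_0(s_0)$ at the basepoint'' presupposes exactly the no-drift statement you are trying to prove; the tangent-direction lift determines the potential only up to the overall shift in question. The gap is not cosmetic: for loops of Legendrian \emph{immersions} based at $A_m$ the drift is realizable (by the h-principle one can rotate all tangent directions through $2\pi$, shifting the potential by $2$), so any argument that never uses embeddedness beyond formal front combinatorics proves too much and cannot be completed by ``routine bookkeeping.''

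The paper's proof supplies precisely the missing rigidity input. It forms the auxiliary isotopy $\widetilde{L}_t = L_t\cdot A_{-m}$ with a distant copy of $A_{-m}$ carrying constant potential $k-1$, and invokes Chekanov--Pushkar invariance (Theorem~\ref{RP}) of the $0$-graded ruling polynomial: by Lemma~\ref{lem:RulComp1}, $R^0_{\widetilde{L}_0,\widetilde{\mu}_0}(z)=R^2_{A_mA_{-m}}(z)\neq 0$, whereas if $\mu_1\equiv k'\neq k$ then no pairing of $L_1$-strands with $A_{-m}$-strands can be $0$-graded and $R^0_{\widetilde{L}_1,\widetilde{\mu}_1}(z)=0$, a contradiction. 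If you want to salvage your approach, you must replace the continuity/winding argument with some Legendrian-isotopy invariant (rulings, generating families, or the like) that detects the drift; as written the proposal does not prove the lemma.
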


\begin{proof}  Consider the Legendrian isotopy $\widetilde{L}_t$ arising from taking the products $L_t A_{-m}$.  Here $A_{-m}$ is placed sufficiently far along the negative $z$-axis to not intersect the fronts $L_t$ at any point during the isotopy.  We equip $\widetilde{L}_t$ with the Maslov potentials $\widetilde{\mu}_t$ where
\[
\widetilde{\mu}_t|_{L_t} = \mu_t \,\,\, \mbox{and} \,\,\, \widetilde{\mu}_t|_{A_{-m}} \equiv k-1
\]
Now using  Theorem ~\ref{RP} and Lemma ~\ref{lem:RulComp1},
\[
R^0_{\widetilde{L}_1, \widetilde{\mu}_1}(z) = R^0_{\widetilde{L}_0, \widetilde{\mu}_0}(z) = R^2_{A_m A_{-m}}(z) \neq 0.
\]
However, if $\mu_1 \neq k$ then 
$R^0_{\widetilde{L}_1, \widetilde{\mu}_1}(z) = 0$.
\end{proof}
\begin{proof}[Proof of Theorem ~\ref{NonCommute}.]
Assume that $L_t, 0\leq t \leq 1,$ is a Legendrian isotopy with $L_0 = A_m A_n$ and $L_1 = A_n A_m$ so that during the course of the isotopy the two components are interchanged.  Without loss of generality we can assume $|m| \geq |n|$ (if not reverse the isotopy) and that $m>0$ and $n <0$ (if not reverse orientations appropriately ).  

Now, consider the isotopy $\widetilde{L}_t$ arising from including an extra component $A_{-m-n }$ far below the other two.  Equip $\widetilde{L}_t$ with a Maslov potential $\mu_0$ so that $ \mu_0|_{A_m} \equiv 1$ and $ \mu_0|_{A_n} = \mu_0|_{A_{-m-n}} = 0$.  $\mu_0$ may be uniquely extended along the isotopy as $\mu_t, 0 \leq t \leq 1$.  According to Lemma ~\ref{lem:MasPo} $\mu_1$ will take these same values on the respective components.  
$(\widetilde{L}_0, \mu_0)$  has $0$-graded rulings (they can be described as in Theorem ~\ref{the:RulComp2}), but $(\widetilde{L}_1, \mu_1)$ does not.  Thus, Theorem ~\ref{RP} gives the contradiction
\[
0 = R^0_{\widetilde{L}_1, \mu_1 }(z) = R^0_{\widetilde{L}_0, \mu_0}(z) \neq 0.
\]

\end{proof}

However, $2$-graded rulings cannot be used to distinguish products of the basic fronts $A_m$, and this will play a crucial role in the proof of Theorem~\ref{mainT}.

\begin{lemma}  
\label{lem:Commute}
If $L_1$ and $L_2$ are products of the basic fronts $A_m$ which differ only in the ordering of factors then $R^2_{L_1}(z) = R^2_{L_2}(z)$.  
\end{lemma}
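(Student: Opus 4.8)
The plan is to reduce, by sorting, to a single elementary move and then to dispatch that move with an explicit bijection of normal rulings. First I would observe that the computation in the proof of Theorem~\ref{the:RulComp2} never uses that the parts of $\lambda$ and $\mu$ appear in non-increasing order: for a product in which all positive-index basic fronts are stacked above all negative-index ones, $A_{i_1}\cdots A_{i_\ell}\cdot A_{-j_1}\cdots A_{-j_k}$, permuting the $i$'s (or the $j$'s) merely permutes the rows (or the columns) of the matrices counted by $M_{\lambda,\mu}$, so the sum $z^{2\ell k-\ell-k}\sum_{(b_{ij})}\prod_{i,j}\langle b_{ij}\rangle$ is unchanged. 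Hence $R^2$ of any ``positives-above-negatives'' product of basic fronts depends only on the two multisets of indices. Since the sign pattern of a sequence $(i_1,\dots,i_N)$ can be brought to sorted form by bubble sort, each step exchanging a negative-index factor with the positive-index factor immediately below it, it suffices to prove that
\[
R^2_{K\cdot A_{-a}\cdot A_b\cdot K'}(z)=R^2_{K\cdot A_b\cdot A_{-a}\cdot K'}(z),\qquad a,b\ge 1,
\]
for arbitrary products of basic fronts $K,K'$.

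To prove this I would construct a bijection between the sets of $2$-graded normal rulings of the two fronts that preserves the number of switches; since no cusps occur in $A_{\pm a}$ or $A_{\pm b}$, this also preserves $j(\rho)$, hence the full ruling polynomial. In $K\cdot A_{-a}\cdot A_b\cdot K'$ the four pieces occupy disjoint horizontal bands, so every crossing is internal to one piece. Because $A_{-a}$ is oriented to the left and $A_b$ to the right, a $2$-graded (orientation-reversing) ruling $\rho$ pairs each strand of $A_{-a}$ with a strand of $A_b$ or with a rightward strand of $K\cup K'$, and dually for $A_b$; the structure of the ``internal'' part of $\rho$ (the pairs joining $A_{-a}$ to $A_b$) is controlled by the normality condition exactly as in the proof of Theorem~\ref{the:RulComp2}, while the ``external'' strands of the two bands reach out through $K$ and $K'$. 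The bijection exchanges the $z$-positions of the $A_{-a}$ and $A_b$ bands, mirrors the internal block structure accordingly, and reattaches each external strand to the companion it had before the move.

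The hard part will be verifying that this surgery is well defined and switch-preserving. Since strands of $A_{-a}$ and of $A_b$ can be companions of strands lying in $K$ or $K'$, the associated ruling disks thread through the band being moved, and one must check that re-routing them through the band in its new vertical location neither creates nor destroys a switch and never produces one of the three arrangements forbidden by the normality condition (Figure~\ref{fig:NormC}). As a guide, and a sanity check, I note that the case $K=K'=\emptyset$ is immediate: the contactomorphism $(x,y,z)\mapsto(-x,y,-z)$ of $J^1(S^1)$ carries $A_{-a}\cdot A_b$ to $A_{-b}\cdot A_a$, and then reversing all orientations (which changes neither the switches nor the cusps of any ruling, so fixes $R^2$) yields $A_b\cdot A_{-a}$; with Theorem~\ref{RP} this gives $R^2_{A_{-a}A_b}=R^2_{A_bA_{-a}}$. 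The substance of the lemma, and the reason a direct bijection is needed rather than a Legendrian isotopy (which Theorem~\ref{NonCommute} rules out), is that this equality survives in the presence of $K$ and $K'$.
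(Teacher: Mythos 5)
Your reduction to a single adjacent swap is fine (permutation invariance in the positives-above-negatives case does follow from the formula of Theorem~\ref{the:RulComp2}, and bubble sort organizes the rest), but the argument stops exactly where the content of the lemma lies: you announce a switch-preserving bijection for the swap $K\cdot A_{-a}\cdot A_b\cdot K' \leftrightarrow K\cdot A_b\cdot A_{-a}\cdot K'$ and then write that ``the hard part will be verifying that this surgery is well defined and switch-preserving'' --- and that verification is never carried out. Moreover, as described the surgery is not quite the right map: one cannot simply ``reattach each external strand to the companion it had before the move,'' because after the swap the normality condition forces a \emph{different} ordering of the blocks inside $A_{-a}$, $A_b$, and inside those factors of $K$ and $K'$ that have companions in the two moved bands; keeping the literal pairing while translating the bands will in general produce one of the forbidden configurations. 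The correct bijection goes through the combinatorial data (the matrix of block sizes between each pair of factors together with a switchless ruling of each matched block pair), and what must actually be proved is: (i) for any ordering of the factors, the normality condition forces a unique admissible ordering of the blocks within each factor --- namely the reverse of the cyclic order in which the other factors appear along the $z$-axis after cutting it just above the given factor --- so the data determines a unique ruling of the new front; and (ii) the number of switches inside each factor equals its number of nonempty blocks minus one, which is independent of the ordering, so $j(\rho)$ is preserved. This is precisely the ``key observation'' in the paper's proof, which runs the block analysis of Theorem~\ref{the:RulComp2} directly for an arbitrary ordering of factors (no reduction to adjacent swaps is needed) and concludes that $R^2_L$ equals $z^{2\ell k-\ell-k}\sum_{(b_{ij})\in M_{\alpha,\beta}}\prod_{i,j}\langle b_{ij}\rangle$ for every ordering. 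Without (i) and (ii), your proposal has a genuine gap.

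A minor side point: in your $K=K'=\emptyset$ sanity check, the map $(x,y,z)\mapsto(-x,y,-z)$ is an ambient contactomorphism, not a Legendrian isotopy, so Theorem~\ref{RP} does not apply as stated; one would instead observe directly that $180^\circ$ rotation of the annulus induces a switch-preserving bijection of $2$-graded rulings. But that case is not load-bearing; the essential missing piece is the unverified swap move above.
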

\begin{proof}  For such a link $L$ suppose that the components of $L$ are precisely
$A_{\alpha_1}, \ldots, A_{\alpha_\ell}$ and $A_{-\beta_1}, \ldots, A_{-\beta_k}$ where $\alpha_1 \geq \ldots \geq \alpha_\ell \geq 1$ and $\beta_1 \geq \ldots \geq \beta_k \geq 1$.  A slight variation of the proof of Theorem ~\ref{the:RulComp2} shows that regardless of the order in which these factors appear we may compute
\[
R_L =  z^{2 \ell k - \ell - k} \sum_{(b_{ij}) \in M_{\alpha, \beta} } \prod_{i, j} \langle b_{ij} \rangle.
\]
Once again, a ruling $\rho$ of $L$ divides each component $A_{\alpha_i}$ into `blocks' $B_{ij}$, $1 \leq j \leq k$ where $\rho( B_{ij}) \subset A_{-b_j}$.  The key observation is that the normality condition still forces the ordering along the $z$-axis  of both the blocks $B_{ij}$ within $A_{\alpha_i}$ as well as their images $\rho(B_{ij})$ within $A_{-\beta_j}$.  Specifically, the ordering of the $B_{ij}$ within $A_{\alpha_i}$ must be as follows.  Cut the $z$-axis just above $A_{\alpha_i}$ and glue the end at $+\infty$ to the end at $-\infty$.  The factors $A_{-\beta_j}$ appear in some order along this now unbroken interval, and the ordering of the $B_{ij}$ within $A_{\alpha_i}$ should be reverse to this.  From here the calculation proceeds as in Theorem ~\ref{the:RulComp2}.
\end{proof}

\section{HOMFLY-PT skein module of the annulus}

Let $R=\Z[a^{\pm1}, z^{\pm1}]$ be the ring of Laurent polynomials in variables $a$ and $z$.  Denote by $\mathcal{L}$ the set of equivalence classes of oriented link diagrams in the annulus up to regular isotopy.  (That is, two diagrams are considered equivalent if they are related via Reidemeister moves of type II or III.)  In addition, let $R\mathcal{L}$ denote the free $R$-module generated by $\mathcal{L}$.

The {\it HOMFLY-PT skein module} of the annulus, $\C$, is the quotient of $R\mathcal{L}$ obtained by imposing the skein relations
\begin{equation*}
\tag{i}\fig{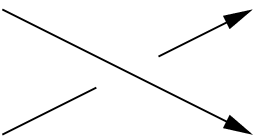} - \fig{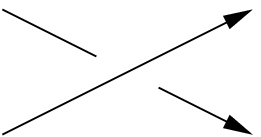} = z \fig{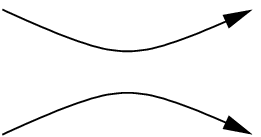}
\end{equation*}
\begin{equation*}
\tag{ii}\fig{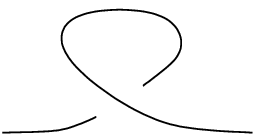} = a \raisebox{-.6ex}{\includegraphics[height=1.2ex]{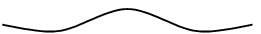}} \,\,\, \mbox{and} \,\,\, \fig{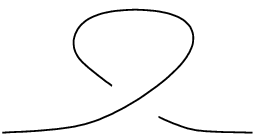} = a^{-1} \raisebox{-.6ex}{\includegraphics[height=1.2ex]{HSR7.eps}},
\end{equation*}
\begin{equation*}
\tag{iii}\displaystyle  D \sqcup \fig{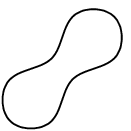} = \frac{a-a^{-1}}{z} D.
\end{equation*}

\begin{remark}
(i) The third relation follows from the first two except in the case when $D$ is the diagram of an empty link.

(ii)  Here we consider usual diagrams of smooth knots and links rather than Legendrian front diagrams.  However, a front diagram may be considered as a usual smooth knot diagram by rounding cusps and hence determines an element of $\C$.

(iii) $\C$ inherits a multiplication from the stacking of diagrams as in Section 2.  In contrast to the Legendrian case, the multiplication is commutative at the level of diagrams.  

(iv)  The diagrams appearing in the skein relations share the same homology class, so $\C$ inherits a grading,
\[
\displaystyle
\C = \bigoplus_{x \in H_1(S^1) } \C_x.
\]
\end{remark}

Turaev introduced the skein module $\C$ in \cite{Tu1}  and proved that $\C$ is free with linear basis, $ \{ A_\lambda A_{-\mu} | \lambda\vdash n_1, \mu\vdash n_2; n_1, n_2 \geq 0  \} $ consisting of monomials in the basic fronts $A_{\pm m}$.  

$\C$ has subalgebras
\[
\C^+ = \bigoplus_{n\geq 0} \C^+_n, \, \C_n^+= \mbox{span}\{A_\lambda | \lambda \vdash n\} 
\]
\[
			\C^- = \bigoplus_{n \geq 0} \C^-_{-n}, \, \C^-_{-n}= \mbox{span}\{A_{-\mu} | \mu \vdash n\}.
\]
satisfying
\begin{equation}
\label{eq:SkeinMod}
\C = \C^+\otimes\C^-.
\end{equation}

Using Turaev's basis, $2$-graded ruling polynomials provide a linear map
\[
\C \rightarrow R, \,\,\, A_\lambda A_{-\mu} \mapsto R^2_{ A_\lambda A_{-\mu} }(z)
\]
which in view of Equation (\ref{eq:SkeinMod}) may be considered as a bilinear form on $\C^+$,
\[
\langle\cdot,\cdot \rangle : \C^+ \times \C^+ \rightarrow R, \,\,\, \langle A_\lambda, A_\mu \rangle = R^2_{A_\lambda A_{-\mu}}(z). 
\]
\begin{remark}  $\langle , \rangle$ is symmetric as reversing the orientation of all components of a Legendrian link will not change the $2$-graded ruling polynomial.  We will see in the next section that $\langle , \rangle$ is actually a positive definite inner product.
\end{remark}

\subsection{Identification of $\C^+$ with the algebra of symmetric functions}

$\C^+$ is a free algebra with unit possessing one generator $A_m$ in each grading degree $m \geq 1$.  Another well known graded algebra with this property is the algebra of symmetric functions $\Lambda$, and in this section we shall fix an isomorphism between them following existing conventions in the literature \cite{AM}, \cite{Lu}, \cite{MM}.  Turaev's geometric basis $A_\lambda$ will be identified with a deformation of the power sum symmetric functions.  As the power sums form a rational basis for $\Lambda$ it will be necessary to begin by enlarging our coefficient ring.  

Let $R'$ denote the smallest subring of rational functions in two variables $a$ and $s$ containing $\Z[a^{\pm 1}, s^{\pm 1}]$ as well as the denominators $s^r-s^{-r}$, $r\geq 1$.  We set $z= s- s^{-1}$ so that $R \subset R'$.   In this section, we consider the HOMFLY-PT skein module $\C_{R'} = R' \otimes_R \C$ over the coefficient ring $R'$ although we will not continue to indicate this with our notation.

Let $\Lambda= \Lambda_{R'} $ denote the algebra of symmetric functions in a countably infinite set of variables ${\bf X}=  \{ x_1, x_2, x_3, \ldots \}$. Here we take coefficients in $R'$.  $\Lambda$ consists of formal polynomials in the $x_i$'s which are unchanged by permuting the variables.  See for instance \cite{Mac} or \cite{St}. A grading,  $\Lambda = \bigoplus_{n\geq 0} \Lambda_n$ arises where $\Lambda_n$ consists of those symmetric functions which are homogeneous of degree $n$ in the $x_i$'s.

\begin{theorem}[\cite{AM}, \cite{Lu}, \cite{MM}] There is an isomorphism  of graded algebras
\[
\begin{array}{ccl} \C^+ & \cong & \Lambda_{R'} \\
Q_\lambda &\leftrightarrow &s_\lambda
\end{array}
\]
where $s_\lambda$ denotes the Schur function and the $Q_\lambda$ satisfy
\begin{equation}
\label{eq:Iso}
A_m = \sum_{\substack{a +b = m-1\\ a,b \geq 0}} (-1)^b s^{a-b} Q_{(a | b)}.
\end{equation}
for $m \geq 1$.  Here, $(a|b)$ denotes the hook partition $(a|b) = (a+1, \underbrace{ 1, \ldots, 1}_{b })$.
\end{theorem}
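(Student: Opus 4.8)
The plan is to prove that the isomorphism $\C^+ \cong \Lambda_{R'}$ sending $Q_\lambda$ to the Schur function $s_\lambda$ exists and is compatible with the defining relation (\ref{eq:Iso}). Since $\C^+$ is a free polynomial algebra with one generator in each positive degree, it suffices to specify the image of each generator $A_m$ and verify that the resulting map is an isomorphism. The strategy is: (1) define a map $\Lambda_{R'} \to \C^+$ by declaring $s_\lambda \mapsto Q_\lambda$ where the $Q_\lambda$ are \emph{defined} as the unique elements making (\ref{eq:Iso}) hold, and (2) show this is a bijection by exhibiting the inverse via a change of basis that is upper-triangular with invertible diagonal.

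First I would set up the combinatorial backbone. Recall that in $\Lambda_{R'}$ the Schur functions $\{s_\lambda\}_{\lambda \vdash n}$ form a basis of $\Lambda_n$. Define elements $P_m \in \Lambda$ by $P_m = \sum_{a+b=m-1,\ a,b\geq 0} (-1)^b s^{a-b} s_{(a|b)}$; these are deformations of the power sums $p_m$ (setting $s=1$ recovers $p_m = \sum_\lambda (-1)^{\ell(\lambda)-\text{(number of columns)}}\dots$, or more precisely the standard expansion $p_m = \sum_{a+b=m-1}(-1)^b s_{(a|b)}$ by the Murnaghan–Nakayama rule on a hook). The key algebraic input is that the $P_m$ are algebraically independent and generate $\Lambda_{R'}$ as an algebra — this follows because, after setting $s=1$, the $p_m$ generate $\Lambda \otimes \mathbb{Q}$, and the denominators $s^r - s^{-r}$ we adjoined to $R'$ are exactly what is needed to invert the change of basis between $\{P_m\}$ and $\{p_m\}$ (or directly between the $P$'s and the $s_\lambda$'s) in each degree. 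Thus there is a well-defined algebra isomorphism $\varphi \colon \Lambda_{R'} \to \C^+$ determined by $\varphi(P_m) = A_m$, using that $\C^+$ is the free algebra on the $A_m$.

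Second, I would check that $\varphi(s_\lambda) = Q_\lambda$ in the sense intended by the theorem, i.e. that the $Q_\lambda := \varphi(s_\lambda)$ satisfy (\ref{eq:Iso}). But this is immediate: applying $\varphi$ to the identity $P_m = \sum_{a+b=m-1}(-1)^b s^{a-b} s_{(a|b)}$ in $\Lambda_{R'}$ and using that $\varphi$ is an $R'$-algebra homomorphism with $\varphi(P_m)=A_m$ yields exactly $A_m = \sum_{a+b=m-1}(-1)^b s^{a-b} Q_{(a|b)}$. Finally, since $\{s_\lambda\}$ is an $R'$-basis of $\Lambda_{R'}$ and $\varphi$ is an isomorphism, $\{Q_\lambda\}$ is an $R'$-basis of $\C^+$, and $\varphi$ respects the gradings because $\deg P_m = \deg A_m = m$. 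This establishes the graded algebra isomorphism with the stated correspondence.

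I expect the main obstacle to be the algebraic verification underlying step one: namely, confirming that the transition matrix (in each degree $n$) between the products $\{P_\lambda = P_{\lambda_1}\cdots P_{\lambda_\ell}\}_{\lambda \vdash n}$ and the Schur basis $\{s_\lambda\}_{\lambda \vdash n}$ is invertible over $R'$, equivalently that the $P_m$ really do freely generate $\Lambda_{R'}$. This reduces to the classical fact about power sums together with careful bookkeeping of which denominators appear — and one must check the denominators $s^r - s^{-r}$ suffice, which is precisely why $R'$ was defined that way. The skein-theoretic content (that $\C^+$ is free on the $A_m$) is already available from Turaev's theorem quoted above, so no new topology is needed; the proof is essentially a translation of known identities in $\Lambda$ through the free-algebra presentation of $\C^+$. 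I would also remark that this is not original — it is attributed to \cite{AM}, \cite{Lu}, \cite{MM} — so the write-up can be brief, citing those sources for the identity (\ref{eq:Iso}) at the level of symmetric functions and for the algebra isomorphism, and merely noting the compatibility with Turaev's basis.
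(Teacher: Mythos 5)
First, note that the paper itself does not prove this statement: it is quoted from \cite{AM}, \cite{Lu}, \cite{MM}, and the remark that follows it only sketches how the $Q_\lambda$ are constructed in those references (as closures of the Hecke-algebra idempotents $E_\lambda$, equivalently as eigenvectors of the meridian map $\varphi$). So there is no in-paper proof to compare against, and the real question is whether your sketch would stand on its own as a proof of the quoted result.

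It would not, for two reasons. The main one is that you take \eqref{eq:Iso} as a \emph{definition} of the $Q_\lambda$, setting $Q_\lambda:=\varphi(s_\lambda)$ for a map determined by sending the deformed power sums $P_m=\sum_{a+b=m-1}(-1)^bs^{a-b}s_{(a|b)}$ to $A_m$; with that move the theorem becomes an almost tautological existence statement (``there is some basis of $\C^+$ indexed by partitions in which the $A_m$ expand by the hook formula''). But the $Q_\lambda$ in the theorem are specific, previously defined skein elements, and the substance of the cited results is precisely that \emph{those} elements multiply like Schur functions (\cite{Lu}) and are related to Turaev's generators by the hook expansion \eqref{eq:Iso} (\cite{MM}); these are genuinely skein-theoretic facts (manipulations of Hecke idempotents in the annulus) that your argument never engages. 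Your construction would suffice as a substitute definition for the formal manipulations later in this paper, but it does not prove the theorem as stated, nor does it identify your $Q_\lambda$ with the idempotent closures that make the link to quantum invariants mentioned in the remark. Secondly, even the weaker existence statement hinges on the step you explicitly defer: that the $P_m$ (equivalently the $P_\lambda$) give an $R'$-basis, i.e.\ that the transition matrix to the Schur basis is invertible over $R'$ and not merely over $\mathbb{Q}(s)$. This is not the classical $s=1$ fact plus hand-waving about denominators; it requires showing the relevant determinants are units in $R'$ (already in degree $2$ one gets $s+s^{-1}=(s^2-s^{-2})/(s-s^{-1})$, and one must control what happens in all degrees). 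As written, the ``main obstacle'' you name is exactly the content of your own argument, so the proposal is a plan rather than a proof; if you intend to rely on \cite{Lu} and \cite{MM} for both the isomorphism and \eqref{eq:Iso}, the honest write-up is simply a citation, as the paper does.
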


\begin{remark}
The skein elements $Q_\lambda$ are described in \cite{AM}.  They arise as closures (identify the boundaries) of linear combinations, $E_\lambda$, of link diagrams in the rectangle $[0, 1]\times \R$ with $n$ boundary points on each of $\{0\} \times \R$ and $\{1\} \times \R$ oriented as inputs and outputs respectively.  The $E_\lambda$ are explicitly described in terms of the Young diagram of $\lambda$.  The skein module generated by diagrams of this type in $[0, 1]\times \R$  is one version of the Hecke algebra $\mathcal{H}_n$ (the product here is defined composing diagrams side to side rather than vertically) which specializes to the group algebra of the symmetric group $S_n$ when $s=1$.  The $E_\lambda$ are idempotents which specialize to appropriate multiples of the Young symmetrizers when $s=1$.  

Alternatively, in \cite{Lu} the  $Q_\lambda$ are characterized up to scalars as the eigenvectors of the endomorphism $\varphi : \C^+ \rightarrow \C^+$ defined by adding an extra loop around a diagram
\[
\varphi(X) = \raisebox{-5.4ex}{\includegraphics[height=14.4ex]{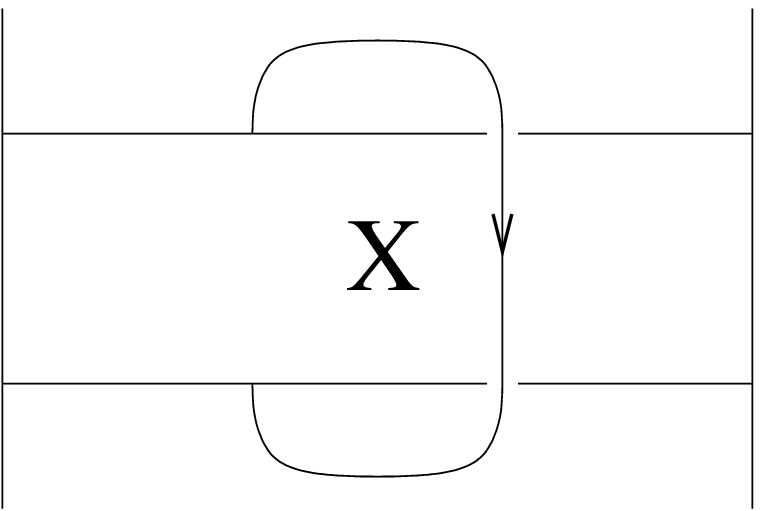}} \,\, .
\]
\cite{Lu} provides as well a skein theoretic proof that the identification of the $Q_\lambda$ with the Schur symmetric functions gives an algebra isomorphism between $\C^+$ and $\Lambda$.  This is remarked in \cite{AM} as a consequence of the fact that the $SU(N)_q$ quantum invariants of links in $\R^3$ with components decorated by irreducible representations $V_\lambda$ may be computed from the HOMFLY-PT polynomial by satelliting each component with the corresponding $Q_\lambda$ and then specializing the variables. 

The relationship of Turaev's basic fronts $A_m$ with the $Q_\lambda$ given in Equation (\ref{eq:Iso}) is found in \cite{MM}.  Under this identification the basic fronts $A_m$ specialize to the power sum functions when $s=1$.  \cite{MM} also contains formulas relating the $A_m$ with other well known bases for $\Lambda$.
 
\end{remark}

\begin{remark}  During the final preparation of this article the author noticed that a seemingly related deformation of the power sum symmetric functions has appeared in the literature on representation theory of Hecke algebras.  The interested reader may wish to make a comparison of the $A_\mu$ described in the present paper with the symmetric functions $q_\mu(x;q)$ appearing in \cite{HLR} keeping in mind that the versions of the Hecke algebra used there and in \cite{MM} differ a bit.  We note that \cite{HLR} contains a computation of the inner product $\left(q_\mu(x;q), q_\lambda(x;q) \right)$ involving a sum of similar nature to the one appearing in our Theorem~\ref{the:RulComp2}, and this result may be related to Theorem \ref{the:product} below.  However, no analog of the variable $z$ is considered in \cite{HLR}, and the proofs seem to be quite different. 
 
\end{remark}

The algebra $\Lambda$ has a standard inner product with respect to which the Schur functions form an orthonomal basis.  Hence, it is natural to define an inner product on $\C^+$ so that the $Q_{\lambda}$ form an orthonormal basis,
\[
(,): \C^+\times \C^+ \rightarrow R', \,\,\,
(Q_\lambda, Q_\mu) := \delta_{\lambda, \mu}.
\]
It turns out that $(,)$ may be interpreted on Turaev's basis $A_{\lambda}$ in terms of ruling polynomials, and in fact agrees with the bilinear form $\langle, \rangle$ defined earlier in this section.

\begin{theorem}  
\label{the:product}
For any partitions $\lambda$ and $\mu$,
\[
(A_\lambda, A_\mu) = R^2_{A_{\lambda}A_{-\mu}}(z) = \langle A_\lambda, A_\mu \rangle.
\]
\end{theorem}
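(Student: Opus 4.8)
The plan is to prove the only substantive assertion, namely $(A_\lambda,A_\mu)=R^2_{A_\lambda A_{-\mu}}(z)$; the equality $R^2_{A_\lambda A_{-\mu}}(z)=\langle A_\lambda,A_\mu\rangle$ is merely the definition of the bilinear form $\langle\,\cdot\,,\cdot\,\rangle$. The idea is to compute the left-hand side by a generating-function identity in $\Lambda_{R'}$ and recognize the matrix sum of Theorem~\ref{the:RulComp2}. Throughout I would use the standard fact that the inner product $(\,\cdot\,,\cdot\,)$ on $\Lambda$ is the self-duality (Hopf) pairing, i.e. $(fg,h)=(f\otimes g,\Delta h)$ and $(f,gh)=(\Delta f,g\otimes h)$, where $\Delta f(\mathbf{X},\mathbf{Y})=f(\mathbf{X}\cup\mathbf{Y})$ is the coproduct under which the power sums $p_n$ are primitive (see \cite{Mac}).

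The first step is to repackage Equation~(\ref{eq:Iso}). A standard manipulation with hook Schur functions (using $s_{(a|b)}=\sum_{j=0}^{b}(-1)^{j}h_{a+1+j}e_{b-j}$), which is also recorded in \cite{MM}, turns (\ref{eq:Iso}) into the closed form
\[
A(t):=1+(s-s^{-1})\sum_{m\ge 1}A_m t^m=\prod_{i}\frac{1-s^{-1}x_i t}{1-s\,x_i t}=\exp\!\left(\sum_{n\ge 1}\frac{s^n-s^{-n}}{n}\,p_n\,t^n\right).
\]
Since each $p_n$ is primitive, $\Delta A(t)=A(t)\otimes A(t)$, so $A(t)$ is grouplike. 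Hence both $(A(t),\,\cdot\,)$ and $(\,\cdot\,,A(u))$ are unital $R'$-algebra homomorphisms into $R'[[t]]$, resp. $R'[[u]]$, and from $(p_n,p_m)=n\,\delta_{nm}$ one gets $(A(t),p_n)=(s^n-s^{-n})t^n$, whence
\[
(A(t),A(u))=\exp\!\left(\sum_{n\ge 1}\frac{(s^n-s^{-n})^2}{n}(tu)^n\right)=\frac{(1-tu)^2}{(1-s^2 tu)(1-s^{-2}tu)}.
\]

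The second step is to pass to several variables. Since a product of grouplike elements is grouplike, iterating the two pairing identities gives the Cauchy-type factorization
\[
\left(\prod_{i=1}^{\ell}A(t_i),\ \prod_{j=1}^{k}A(u_j)\right)=\prod_{i=1}^{\ell}\prod_{j=1}^{k}\frac{(1-t_i u_j)^2}{(1-s^2 t_i u_j)(1-s^{-2}t_i u_j)}.
\]
For partitions $\lambda=(\lambda_1,\dots,\lambda_\ell)$ and $\mu=(\mu_1,\dots,\mu_k)$, the coefficient of $t_1^{\lambda_1}\cdots t_\ell^{\lambda_\ell}u_1^{\mu_1}\cdots u_k^{\mu_k}$ on the left equals $(s-s^{-1})^{\ell+k}(A_\lambda,A_\mu)$, because extracting $t_i^{\lambda_i}$ (with $\lambda_i\ge 1$) from the $i$-th factor of $\prod_i A(t_i)$ forces the term $(s-s^{-1})A_{\lambda_i}t_i^{\lambda_i}$, and likewise for the $u_j$. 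On the right I would expand each factor as
\[
\frac{(1-w)^2}{(1-s^2 w)(1-s^{-2}w)}=(s-s^{-1})^2\sum_{b\ge 0}\langle b\rangle\,w^{b},
\]
where $\langle b\rangle=\tfrac{s^{2b}-s^{-2b}}{s^2-s^{-2}}$ for $b\ge 1$ and $\langle 0\rangle=z^{-2}=(s-s^{-1})^{-2}$; the values for $b\ge 1$ agree with $R^2_{A_b A_{-b}}(z)$ by Lemma~\ref{lem:RulComp1} (a one-line generating-function check, $\sum_{b\ge 1}\langle b\rangle t^b=t/(1-(s^2+s^{-2})t+t^2)$), so this is consistent with the paper's notation. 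Multiplying out, the coefficient of $t_1^{\lambda_1}\cdots t_\ell^{\lambda_\ell}u_1^{\mu_1}\cdots u_k^{\mu_k}$ on the right is $(s-s^{-1})^{2\ell k}\sum_{(b_{ij})\in M_{\lambda,\mu}}\prod_{i,j}\langle b_{ij}\rangle$. Dividing by $(s-s^{-1})^{\ell+k}$ yields $(A_\lambda,A_\mu)=z^{2\ell k-\ell-k}\sum_{(b_{ij})\in M_{\lambda,\mu}}\prod_{i,j}\langle b_{ij}\rangle$, which is exactly $R^2_{A_\lambda A_{-\mu}}(z)$ by Theorem~\ref{the:RulComp2}.

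The main obstacle is the first step, i.e. converting the Schur-basis expansion (\ref{eq:Iso}) into the exponential/product form of $A(t)$ and justifying it cleanly (alternatively one may take this generating function as the definition of the isomorphism $\C^+\cong\Lambda$ and re-derive (\ref{eq:Iso}) from it). Once $A(t)$ is exhibited as grouplike, everything downstream is formal power-series bookkeeping; the only point there requiring care is the uniform treatment of the empty blocks $b_{ij}=0$, which is precisely what the convention $\langle 0\rangle=z^{-2}$ accomplishes, matching the constant term of $(1-w)^2/\big((1-s^2w)(1-s^{-2}w)\big)$. As a byproduct this identifies $\langle\,\cdot\,,\cdot\,\rangle$ with the standard (hence positive definite) inner product on $\Lambda$.
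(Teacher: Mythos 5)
Your argument is correct, and it reaches the theorem by a recognizably different packaging of the same underlying Hopf-algebra strategy used in the paper. The paper's proof has three ingredients: a computation of $(A_m,A_m)$ from (\ref{eq:Iso}) together with a generating-function verification that this equals $\langle m\rangle$ (Lemma~\ref{lem:m}); a Littlewood--Richardson computation of $\Delta Q_{(a|b)}$ (Lemma~\ref{lem:hook}) giving $\Delta(A_m)=z\sum_i A_i\otimes A_{m-i}$ (Proposition~\ref{prop:co}); and iterated-coproduct/adjointness bookkeeping with the operators $D_k$ that reduces $(A_\lambda,A_\mu)$ to $z^{2\ell k-\ell-k}\sum_{(b_{ij})\in M_{\lambda,\mu}}\prod_{i,j}(A_{b_{ij}},A_{b_{ij}})$, which Theorem~\ref{the:RulComp2} identifies with $R^2_{A_\lambda A_{-\mu}}(z)$. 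You compress the second and third ingredients into the single statement that $A(t)=1+z\sum_{m\ge1}A_mt^m$ is grouplike --- which is exactly equivalent to Proposition~\ref{prop:co} --- obtained from the closed form $A(t)=\exp\big(\sum_n\frac{s^n-s^{-n}}{n}p_nt^n\big)$, and then the Cauchy-kernel factorization of $\big(\prod_iA(t_i),\prod_jA(u_j)\big)$ does in generating-function language what the paper does with $D_k$; the first ingredient reappears in your expansion of $(1-w)^2/\big((1-s^2w)(1-s^{-2}w)\big)$. What your route buys is a one-shot closed formula, a uniform treatment of the empty blocks $b_{ij}=0$ via the constant term, and an arguably more transparent reason why the matrix sum of Theorem~\ref{the:RulComp2} appears; what the paper's route buys is that it never leaves the coefficient ring $R'$ and never needs the product/exponential form of $A(t)$. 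Two points in your write-up deserve to be made explicit rather than waved at: first, the passage from (\ref{eq:Iso}) to the product form, via $s_{(a|b)}=\sum_{j}(-1)^jh_{a+1+j}e_{b-j}$, does work but requires the final cancellation $\sum_{c+d=m}(-1)^dh_ce_d=0$ (or an appeal to \cite{MM}, where this form of $A_m$ is recorded), and since the exponential in power sums introduces denominators $n$, you should note that you verify an identity between elements of $R'$ after the harmless scalar extension to $\mathbb{Q}(a,s)$; second, your ``one-line generating-function check'' that Lemma~\ref{lem:RulComp1} yields $\sum_{b\ge1}\langle b\rangle t^b=t/\big(1-(s^2+s^{-2})t+t^2\big)$ is precisely the content of the paper's Lemma~\ref{lem:m} (summing over compositions, $\sum_{\ell\ge1}z^{2(\ell-1)}\big(\sum_{i\ge1}it^i\big)^{\ell}=t/\big((1-t)^2-z^2t\big)$), so it is short but should be written out, as the whole identification of the Hall pairing with ruling counts hinges on it.
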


After providing some lemmas we complete this section with the proof of Theorem ~\ref{the:product}.

\begin{lemma} 
\label{lem:m}
For $m\geq 1$,$ (A_m, A_m) = \langle m \rangle.$
\end{lemma}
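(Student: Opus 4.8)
The plan is to compute both sides of the claimed identity in closed form and check that they agree. For the left-hand side I would use the expansion (\ref{eq:Iso}) of $A_m$ together with the fact that $\{Q_\lambda\}$ is an orthonormal basis for $\C^+$. Since the hook partitions $(a|b)$ with $a+b=m-1$, $a,b\ge 0$, are pairwise distinct, the cross terms in $(A_m,A_m)$ all vanish and the signs $(-1)^b$ get squared away, leaving
\[
(A_m,A_m)=\sum_{\substack{a+b=m-1\\ a,b\ge 0}} s^{2(a-b)}=\sum_{b=0}^{m-1}s^{2(m-1-2b)}=\frac{s^{2m}-s^{-2m}}{s^2-s^{-2}},
\]
where the last step sums a finite geometric series (and uses $z=s-s^{-1}$ only through the definition of the coefficient ring $R'$).

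For the right-hand side I would first repackage the partition sum of Lemma~\ref{lem:RulComp1} as a sum over ordered compositions. The multinomial coefficient $\ell(\lambda)!/(m_1!\cdots m_r!)$ is exactly the number of rearrangements of the parts of $\lambda$ into a composition $(i_1,\dots,i_\ell)$ of $m$ into $\ell$ positive parts, and the quantity $1^{m_1}2^{m_2}\cdots r^{m_r}=i_1 i_2\cdots i_\ell$ is constant on each such rearrangement class, so
\[
\langle m\rangle=\sum_{\ell\ge 1} z^{2(\ell-1)}\ \sum_{\substack{i_1+\cdots+i_\ell=m\\ i_j\ge 1}} i_1 i_2\cdots i_\ell .
\]
Passing to the generating function and using $\sum_{i\ge 1} i t^i = t/(1-t)^2$ followed by summing the geometric series in $\ell$ yields
\[
\sum_{m\ge 1}\langle m\rangle\, t^m=\frac{t}{(1-t)^2-z^2 t}=\frac{t}{1-(s^2+s^{-2})t+t^2}=\frac{1}{s^2-s^{-2}}\left(\frac{1}{1-s^2 t}-\frac{1}{1-s^{-2}t}\right),
\]
where the middle equality uses $2+z^2=s^2+s^{-2}$ and the last is a partial-fraction decomposition. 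Extracting the coefficient of $t^m$ gives $\langle m\rangle=(s^{2m}-s^{-2m})/(s^2-s^{-2})$, matching the computation of $(A_m,A_m)$ above.

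I do not expect a serious obstacle here; the only point requiring genuine care is the combinatorial bookkeeping that converts the partition sum of Lemma~\ref{lem:RulComp1} into the composition sum, i.e. checking that the multinomial coefficient really counts compositions and that $\prod_k k^{m_k}=\prod_j i_j$, after which the identity is a short generating-function manipulation. As an alternative that avoids generating functions, one could verify directly that both $(A_m,A_m)$ and $\langle m\rangle$ satisfy the recursion $f(m)=(s^2+s^{-2})f(m-1)-f(m-2)$ with the common initial values $f(1)=1$ and $f(2)=s^2+s^{-2}$, but the generating-function route is cleaner and will make the bookkeeping transparent.
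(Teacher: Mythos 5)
Your proof is correct and follows essentially the same route as the paper: both sides are handled the same way, with $(A_m,A_m)=(s^{2m}-s^{-2m})/(s^2-s^{-2})$ coming from the hook expansion (\ref{eq:Iso}) and orthonormality of the $Q_\lambda$, and $\langle m\rangle$ evaluated via the generating function obtained by rewriting Lemma~\ref{lem:RulComp1} as a sum over compositions. The only difference is cosmetic: you extract the coefficient in closed form by factoring $1-(s^2+s^{-2})t+t^2$ and using partial fractions, whereas the paper verifies the same identity by checking that the coefficients of $t^m$, $m\ge 1$, in the product of the two generating functions' defining relations cancel.
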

By definition, we have $\langle m \rangle = \langle A_m, A_m \rangle$ which was computed in Lemma ~\ref{lem:RulComp1}.

\begin{proof}
Consider the generating function $\displaystyle F(t) = z^2 \sum_{m\geq 0} \langle m \rangle t^m$ (we maintain here the convention that $\langle 0 \rangle= z^{-2}$).  Standard calculations with formal power series show that
\[\displaystyle
F(t) = (1 - \sum_{m\geq 1} m z^2 t^m )^{-1}.
\]

Now, introduce the notation 
\[\displaystyle
\left\{ m \right\} := (A_m, A_m) = \sum_{a= 0}^{m-1} s^{2 (2 a - (m-1))}= \frac{s^{2 m} - s^{-2 m} }{s^2-s^{-2} }
\]
and generating function
\[\displaystyle
G(t) = 1 + \sum_{m\geq 1 } z^2 \left\{ m \right\} t^m.
\]
To see that $F(t) = G(t)$ we show that in the product
\[\displaystyle
G(t) \left(1 - \sum_{m\geq 1} m z^2 t^m \right)
\]
\[\displaystyle = 1 + \sum_{m\geq 1}\left( \left[\sum_{k = 1}^{m-1} z^2 \left\{ k \right\}(-(m-k)z^2)\right] + -mz^2 + z^2\left\{ m \right\} \right) t^m
\]
the coefficients of $t^m$ vanish for $m \geq 1$.  After removing a factor of $\displaystyle\frac{z^2}{s^2-s^{-2} } $ the $m$-th coefficient becomes
\[\displaystyle
\left[\sum_{k=1}^{m-1} (s^{2k} - s^{-2k})(k-m)z^2\right] + s^{2m} -s^{-2m} -m(s^2-s^{-2}) =
\]
\[\displaystyle
\left[\sum_{k=1}^{m-1} (k-m)(s^{2k} - s^{-2k})(s^2 - 2 + s^{-2})\right] + s^{2m} -s^{-2m} -m(s^2-s^{-2}). 
\]
Expand the product in the summation.  After collecting terms into pairs and reindexing the summations we have
\[\everymath{\displaystyle}
\begin{array}{rcl} & \left[\sum_{k = 2}^{m} (k-m -1)(s^{2 k} - s^{-2k})\right] & + (s^{2m} - s^{-2m}) + (-m)(s^2-s^{-2}) \\
+ &\left[\sum_{k = 0}^{m-2}(k-m +1)(s^{2 k} - s^{-2k})\right] & + ((m-1) -m +1)(s^{2(m-1)} - s^{-2(m-1)} ) \\
+ & \sum_{k = 1}^{m-1}(-2)(k-m )(s^{2 k} - s^{-2k}) & = 
\end{array}
\]
\[\displaystyle
\sum_{k = 1}^{m-1}\left[ (k-m-1) + (k-m + 1) - 2(k-m) \right] (s^{2 k}- s^{-2k}) = 0.
\]

\end{proof}

To deduce the more general calculation of $(A_\lambda, A_\mu)$ from that of $(A_m,A_m)$ we make use of a coproduct on $\Lambda$.  As described, for instance in \cite{Mac} page 91, one can consider $\Lambda \otimes \Lambda$ as consisting of functions of two countably infinite sets of variables ${\bf X}$ and ${\bf Y}$ which are symmetric with respect to permutations of both ${\bf X}$ and ${\bf Y}$.  Due to the countable number of variables, given $f \in \Lambda$ one may define $\Delta(f) \in \Lambda \otimes \Lambda$ by using a bijection $\mathbb{N}\times \mathbb{N} \cong \mathbb{N}$ to substitute
\[
\Delta(f) ({\bf X}, {\bf Y}) = f({\bf X}, {\bf Y}).
\]

Properties of $\Delta$ which will be important for us include

\begin{itemize}
\item $\Delta$ is an algebra homomorphism.  (In fact $\Lambda$ may be given the structure of a Hopf algebra.)

\item With respect to $(,)$ and the induced inner product on $\Lambda \otimes \Lambda$, $\Delta$ is adjoint to multiplication.  That is, for any $f,g,h \in \Lambda$
\[
(f, g\cdot h) = (\Delta(f), g\otimes h).
\]

\item Coproduct of the Schur functions $Q_{\lambda}$ may be computed as 
\[\displaystyle
\Delta(Q_{\lambda}) = \sum_{\mu, \nu} c_{\mu\nu}^\lambda Q_{\mu} \otimes Q_{\nu}
\]
where $c_{\mu\nu}^\lambda$ are the Littlewood-Richardson coefficients.  

\end{itemize}

Recall that $c_{\mu\nu}^\lambda$ is $0$ unless the Young diagram of $\mu$ is contained in that  of $ \lambda$.  In the latter case $c_{\mu\nu}^\lambda$ is the number of Littlewood-Richardson tableaux of shape $\lambda\setminus \mu$ consisting of $\nu_1$ $1$'s, $\nu_2$ $2$'s, etc.  In turn, such a tableau, $T$, is given by removing those boxes in the Young diagram of $\lambda$ which are contained in $\mu$ and then labeling the remaining boxes with positive integers so that:
\begin{itemize}
\item Rows are weakly increasing from left to right and columns are strictly decreasing from top to bottom, and
\item If a word $w_1w_2\cdots w_n$ is formed from the entries of $T$ by reading each  row from right to left and working top to bottom, then for $k,l\geq 1$ the number of occurrences of $k$ in the truncation $w_1w_2\cdots w_l$ is greater than or equal to the number of occurrences of $k+1$.
\end{itemize}

To simplify the next formula we make the convention that $Q_{(a|b)} =0$ if one of $a$ or $b$ is negative and the other is positive.

\begin{lemma} \label{lem:hook}  For the hook partition $(a|b)\vdash m$ we have
\[\displaystyle
\Delta(Q_{(a|b)}) = \sum_{k=0}^{m-2} \left(\sum_{a'+b' = k} Q_{(a'|b')}\otimes Q_{(a-a'-1|b-b')} + Q_{(a'|b')}\otimes Q_{(a-a'|b-b'-1)} \right)
\]
\[\displaystyle
Q_\emptyset\otimes Q_{(a|b)} + Q_{(a|b)} \otimes Q_\emptyset.
\]
\end{lemma}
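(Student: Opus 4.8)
The plan is to read off $\Delta(Q_{(a|b)})$ directly from the Littlewood-Richardson description of the coproduct recalled above, $\Delta(Q_\lambda) = \sum_{\mu,\nu} c^\lambda_{\mu\nu}\, Q_\mu\otimes Q_\nu$, exploiting that all partitions $\mu$ and skew shapes that occur are of a very restricted type when $\lambda$ is a hook.

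First I would record that $c^{(a|b)}_{\mu\nu}$ vanishes unless $\mu$ is contained in $(a|b) = (a+1, 1^b)$, and that the partitions contained in a hook are exactly $\emptyset$ and the hooks $(a'|b')$ with $0 \le a' \le a$ and $0 \le b' \le b$. The extreme choices $\mu = \emptyset$ and $\mu = (a|b)$ are immediate: $c^{(a|b)}_{\emptyset,\nu} = \delta_{\nu,(a|b)}$ since $Q_\emptyset$ is the unit, and $c^{(a|b)}_{(a|b),\nu} = \delta_{\nu,\emptyset}$ for degree reasons, and these account precisely for the summands $Q_\emptyset\otimes Q_{(a|b)} + Q_{(a|b)}\otimes Q_\emptyset$. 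So the content of the lemma lies in computing $c^{(a|b)}_{(a'|b'),\nu}$ for $\mu = (a'|b')$ with $0\le a'\le a$, $0\le b'\le b$ and $(a',b')\ne(a,b)$.

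The key observation is that for such $\mu$ the skew shape $(a|b)\setminus(a'|b')$ is the disjoint union of a horizontal strip $H$ formed by the $p := a-a'$ rightmost boxes of the first row and a vertical strip $V$ formed by the $q := b-b'$ lowest boxes of the first column, and that $H$ and $V$ share neither a row nor a column. I would then enumerate the Littlewood-Richardson tableaux of this shape. Column strictness constrains only $V$ (its entries are distinct and increase down the column), while the row condition constrains only $H$ (its entries weakly increase from left to right); hence the reading word is the run coming from $H$ (read right to left, hence weakly decreasing) followed by the run coming from $V$ (read top to bottom). The lattice condition forces the first letter of the reading word --- which is the largest entry of $H$ --- to equal $1$, so $H$ is filled entirely with $1$'s; carrying the lattice condition forward as one reads $V$ then forces $V$ to be either $1, 2, \ldots, q$ or $2, 3, \ldots, q+1$. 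These two fillings, each occurring exactly once, have contents $(p+1, 1^{q-1}) = (p\,|\,q-1)$ and $(p, 1^q) = (p-1\,|\,q)$, with the degenerate situations $q = 0$, $p = 0$, or $\nu$ having one negative hook-coordinate being precisely those killed by the convention $Q_{(c|d)}=0$ introduced just before the lemma. Thus $c^{(a|b)}_{(a'|b'),\nu}$ equals $1$ for $\nu\in\{(a-a'\,|\,b-b'-1),\ (a-a'-1\,|\,b-b')\}$ and $0$ otherwise.

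Finally I would reassemble the coproduct: summing $Q_{(a'|b')}\otimes Q_\nu$ over all hooks $(a'|b')$ with $0\le a'\le a$, $0\le b'\le b$, $(a',b')\ne(a,b)$ --- equivalently over $a'+b' = k$ with $0\le k\le m-2$ --- and over the two admissible $\nu$ for each, gives exactly the double sum in the statement, the convention $Q_{(c|d)}=0$ discarding the spurious boundary terms. The step I expect to be the main obstacle is the Littlewood-Richardson tableau count: one has to follow the ballot condition carefully across the gap between the two connected components of the skew shape and check the degenerate cases $p=0$ and $q=0$ so that the boundary terms of the $k$-sum come out right. One could shorten this step by instead using that a disconnected skew shape has a product skew Schur function, $s_{(a|b)/(a'|b')} = h_p e_q$ with $h_p$ complete homogeneous and $e_q$ elementary, and then invoking the Pieri rule for $e_q\cdot s_{(p)}$; I would probably keep the direct tableau computation, since the combinatorics of Littlewood-Richardson tableaux has just been set up in the text.
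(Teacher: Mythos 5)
Your proposal is correct and follows essentially the same route as the paper: both enumerate the Littlewood--Richardson tableaux of the hook skew shapes $(a|b)\setminus(a'|b')$ directly, observing that the row part must be all $1$'s and the column part consists of consecutive integers starting at $1$ or $2$, with the degenerate cases $a'=a$ or $b'=b$ absorbed by the convention $Q_{(c|d)}=0$ and the terms $\mu=\emptyset$, $\mu=(a|b)$ handled separately. Your treatment of the ballot condition across the two components and the boundary cases is just a more explicit version of the paper's argument.
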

\begin{proof}
The convention guarantees that in the summation only $(a'|b')$ with $a' \leq a$ and $b' \leq b$ appear.  When both inequalities are strict there are two Littlewood-Richardson tableaux.  The top row of such a tableau must consist entirely of $1$'s and the left hand column will consist of consecutive integers beginning with either $1$ or $2$.  The first of these accounts for the $Q_{(a-a'|b-b'-1)}$ term and the second for $Q_{(a-a'-1|b-b')}$.  If $a' = a$ or $b'= b$, then there is only one Littlewood-Richardson tableau of shape $(a|b)\setminus (a'|b')$ and according to the convention one of the terms in the sum will correspondingly vanish.  The only remaining possibilities for $\mu$ are $\emptyset$ or $(a|b)$ and these account for the other two terms.
\end{proof}

\begin{proposition} \label{prop:co}
Letting $A_0= z^{-1}$ we have for $m\geq 1$
\[\displaystyle
\Delta(A_m) = z \sum_{i=0}^m A_i \otimes A_{m-i}.
\]
\end{proposition}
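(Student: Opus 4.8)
The plan is to repackage the hook formula~(\ref{eq:Iso}) and Lemma~\ref{lem:hook} as an identity of generating series, and then reduce the statement to the assertion that one explicit series is grouplike in $\Lambda$. Introduce the bivariate series
\[
\mathbf{Q}(x,y) \;=\; \sum_{a,b\geq 0} x^a y^b\, Q_{(a|b)},
\]
whose coefficient of $x^a y^b$ is the hook Schur function $Q_{(a|b)}=s_{(a|b)}$, homogeneous of degree $a+b+1$; since $\Delta$ preserves degree it extends termwise to such series. By~(\ref{eq:Iso}) one has $t\,\mathbf{Q}(st,-s^{-1}t)=\sum_{m\geq 1}A_m t^m$, so it suffices to understand $\Delta$ of this series.

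First I would compute $\Delta(\mathbf{Q}(x,y))$. Summing the statement of Lemma~\ref{lem:hook} over all $(a,b)\in\Z_{\geq 0}^2$ weighted by $x^a y^b$, the two explicit terms $Q_\emptyset\otimes Q_{(a|b)}$ and $Q_{(a|b)}\otimes Q_\emptyset$ reassemble to $1\otimes\mathbf{Q}(x,y)+\mathbf{Q}(x,y)\otimes 1$ (the unit $1=Q_\emptyset$ never occurs as a coefficient of $\mathbf{Q}$, those coefficients having degree $\geq 1$). In the double sum I substitute $(c,d)=(a-a'-1,b-b')$ in the first family of terms and $(c,d)=(a-a',b-b'-1)$ in the second; each substitution is a bijection onto all of $\Z_{\geq 0}^4$, once one checks that the quadruples excluded by the range $0\leq k\leq m-2$ and by the vanishing convention for $Q_{(a|b)}$ are precisely those for which the would‑be $c$ or $d$ is negative. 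The outcome is
\[
\Delta(\mathbf{Q}(x,y)) \;=\; 1\otimes\mathbf{Q}(x,y) + \mathbf{Q}(x,y)\otimes 1 + (x+y)\,\mathbf{Q}(x,y)\otimes\mathbf{Q}(x,y).
\]

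To finish, specialize $x=st$, $y=-s^{-1}t$ so that $x+y=(s-s^{-1})t=zt$, and write $S(t):=\sum_{m\geq 1}A_m t^m=t\,\mathbf{Q}(st,-s^{-1}t)$. Clearing powers of $t$ from the displayed identity gives $\Delta(S(t))=1\otimes S(t)+S(t)\otimes 1+z\,S(t)\otimes S(t)$, and a one‑line rearrangement using $\Delta(1)=1\otimes 1$ shows $\Delta\big(1+zS(t)\big)=\big(1+zS(t)\big)\otimes\big(1+zS(t)\big)$. Since $A_0=z^{-1}$ we have $1+zS(t)=z\sum_{m\geq 0}A_m t^m=:z\,\mathbf{A}(t)$, hence $\Delta(\mathbf{A}(t))=z\,\mathbf{A}(t)\otimes\mathbf{A}(t)=z\sum_{i,j\geq 0}A_i\otimes A_j\,t^{i+j}$; reading off the coefficient of $t^m$ yields $\Delta(A_m)=z\sum_{i=0}^m A_i\otimes A_{m-i}$.

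I expect the only real obstacle to be the bookkeeping in the middle step: checking that the reindexed double sum runs exactly once over every $Q_{(a'|b')}\otimes Q_{(c|d)}$ with $a',b',c,d\geq 0$, i.e.\ that Lemma~\ref{lem:hook}'s summation bounds together with the vanishing convention neither drop nor double‑count any term. The rest is formal manipulation in $\Lambda$. One could equally run the argument coefficient‑by‑coefficient in $t$, the price being a less transparent exponent count; there the overall factor $z$ comes from combining the two families of terms in Lemma~\ref{lem:hook} via $(-1)^{b'+d}\big(s^{\,a'-b'+c-d+1}-s^{\,a'-b'+c-d-1}\big)=z\,(-1)^{b'+d}s^{\,a'-b'+c-d}$.
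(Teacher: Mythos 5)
Your proof is correct and is essentially the paper's own argument in generating-function form: both rest on Equation (\ref{eq:Iso}) and Lemma \ref{lem:hook} with the same reindexing $(c,d)=(a-a'-1,b-b')$, $(a-a',b-b'-1)$ of the double sum, the paper carrying this out coefficient-by-coefficient in $m$ while you package it as the grouplike identity for $1+z\sum_{m\geq 1}A_m t^m$. The bookkeeping point you flag does check out: since $c+d\geq 0$ throughout, the two hook coordinates are never both negative, so the vanishing convention excludes exactly the out-of-range quadruples and each $Q_{(a'|b')}\otimes Q_{(c|d)}$ with $a',b',c,d\geq 0$ occurs exactly once.
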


\begin{proof}
\[\displaystyle
\Delta(A_m) = \sum_{a+b = m-1}(-1)^b s^{a-b} \Delta( Q_{(a|b)}) = \, \mbox{(Lemma ~\ref{lem:hook} )}
\]
\[\displaystyle
\sum_{a+b = m-1} (-1)^b s^{a-b}\left(  \sum_{k=0}^{m-2} \sum_{a'+b' = k} Q_{(a'|b')}\otimes Q_{(a-a'-1|b-b')} + Q_{(a'|b')}\otimes Q_{(a-a'|b-b'-1)} \right)
\]
\[\displaystyle
+ \sum_{a+b = m-1} Q_\phi\otimes \left((-1)^b s^{a-b} Q_{(a|b)}\right) + \left((-1)^b s^{a-b}Q_{(a|b)}\right) \otimes Q_\phi .
\]
The final two terms are just $1\otimes A_m + A_m\otimes 1 = z(A_0\otimes A_m + A_m \otimes A_0)$.  After putting $c= a-a'$ and $d=b-b'$ the  first term becomes
\[\displaystyle
\sum_{k=0}^{m-2} \sum_{a'+b' = k} \sum_{c+d = m-1-k} (-1)^{b'+d} s^{a'+c-b'-d}\left( Q_{(a'|b')} \otimes Q_{(c-1|d)} +
Q_{(a'|b')} \otimes Q_{(c|d-1)}\right) =
\]
\[\displaystyle
\sum_{k=0}^{m-2} \left( \sum_{a'+ b' =k}(-1)^{b'} s^{a'-b'} Q_{(a'|b')} \right) \otimes s
\left( \sum_{(c-1)+ d =(m-1-k)-1}(-1)^{d}  s^{(c-1)-d} Q_{(c-1|d)} \right) +
\]
\[\displaystyle
\sum_{k=0}^{m-2} \left( \sum_{a'+ b' =k}(-1)^{b'} s^{a'-b'} Q_{(a'|b')} \right) \otimes(- s^{-1})
\left( \sum_{c+ (d-1) =(m-1-k)-1}(-1)^{d-1}  s^{c-(d-1)} Q_{(c|d-1)} \right) =
\]
\[\displaystyle
\sum_{k=0}^{m-2}A_{k+1}\otimes\left(s A_{m-k-1} - s^{-1} A_{m-k-1}\right)= z \sum_{k=1}^{m-1} A_k\otimes A_{m-k}.
\]
\end{proof}

\begin{proof}[Proof of Theorem ~\ref{the:product}.] Let $\lambda = (\lambda_1, \ldots, \lambda_\ell)$, $\mu= (\mu_1, \ldots, \mu_k)$.

Inductively define operators 
\[
D_k: \C^+ \rightarrow (\C^+)^{\otimes k}, D_1 = id, D_{k+1}= (\Delta \otimes (id)^{\otimes k-1} ) \circ D_k.
\]
From the properties of $\Delta$ and Proposition ~\ref{prop:co} we have
\begin{itemize}
\item The $D_k$ are algebra homomorphisms.
\item $(f, g_1 g_2 \cdots g_k) = \left( D_k(f), g_1 \otimes g_2\otimes \cdots \otimes g_k \right)$.
\item Again, letting $A_0 = z^{-1}$, $\displaystyle D_k(A_m) = z^{k-1} \sum_{i_1 + \ldots i_k = m} A_{i_1}\otimes \cdots \otimes A_{i_k}$ where the indices $i_r$ are non-negative integers.
\end{itemize}

Now,
\[\displaystyle
(A_\lambda, A_\mu) = \left( D_k(A_{\lambda_1}\cdots A_{\lambda_\ell}), A_{\mu_1}\otimes\cdots\otimes A_{\mu_k} \right)=
\]
\[\displaystyle
\left( D_k(A_{\lambda_1})\cdots D_k(A_{\lambda_\ell}), A_{\mu_1}\otimes\cdots\otimes A_{\mu_k} \right) =
\]

\begin{multline*}
\displaystyle
\bigg(\left(z^{k-1}\sum_{b_{11} +\cdots+b_{1k} = \lambda_1} A_{b_{11}}\otimes\cdots\otimes A_{b_{1k} }\right)\cdots\left(
 z^{k-1}\sum_{b_{\ell1} +\cdots+b_{\ell k} = \lambda_\ell} A_{b_{\ell1}}\otimes\cdots\otimes A_{b_{\ell k} }\right),
\\
\shoveright{ A_{\mu_1}\otimes\cdots\otimes A_{\mu_k} \bigg)= \\}
\end{multline*}
\begin{equation}\label{eq:A}
\displaystyle
z^{\ell k - \ell} \sum_{(b_{ij}) \in M_{\lambda,\mu} } \prod_{j=1}^k( A_{b_{1j}}\cdots A_{b_{\ell j}}, A_{\mu_j}).
\end{equation}
We are able to restrict the sum to $(b_{ij}) \in M_{\lambda,\mu} $ because the graded components of $\C^+$ are orthogonal with respect to $(,)$.  To conclude, (\ref{eq:A}) becomes
\[\displaystyle
z^{\ell k - \ell} \sum_{(b_{ij}) \in M_{\lambda,\mu} } \prod_{j=1}^k( A_{b_{1j}}\otimes\cdots \otimes A_{b_{\ell j}}, D_\ell(A_{\mu_j}))=
\]
\[\displaystyle
z^{\ell k - \ell} z^{k \ell - k} \sum_{(b_{ij}) \in M_{\lambda,\mu} }\prod_{i,j} (A_{b_{ij}}, A_{b_{ij}}). 
\]
According to Lemma ~\ref{lem:m} and Theorem ~\ref{the:RulComp2} this is equal to $R^2_{A_\lambda A_{-\mu}}$.
\end{proof}

\section{$2$-graded rulings and the Bennequin estimate}

We define the HOMFLY-PT polynomial of a solid torus link $L$ in two steps.  First, using an annular diagram of $L$ and Turaev's basis we have
\[
\begin{array}{ccc} 
\C & \cong & \Z[a^{\pm 1}, z^{\pm 1}, A_{\pm 1}, A_{\pm 2}, \ldots] \\
  \left[L \right]& \leftrightarrow & H_L(a, z, A_i)
\end{array}
\]
$H_L$ is a regular isotopy invariant, and provides an invariant of $L$ as a framed link (assume the framing is blackboard with respect to the projection of $L$ used).  The {\it HOMFLY-PT polynomial} of $L$ is then defined using the normalization
\[
P_L(a,z, A_i) = a^{-w(L) } H_L(a,z, A_i)
\]
where $w(L)$ denotes the {\it writhe} of the diagram $L$.  The writhe is a signed sum of crossings (see Figure ~\ref{fig:writhe}) in the diagram used to compute $H_L$.

Chmutov and Goryunov established the following upper bound in $J^1(S^1)$.
\begin{theorem}[\cite{CG}] \label{chmutov} For any Legendrian link $L \subset J^1(S^1)$,
\[
\textit{tb}(L) + |r(L)| \leq - \mbox{deg}_aP_L.
\]
\end{theorem}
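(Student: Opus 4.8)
The plan is to translate the inequality into a statement about front diagrams and prove it by induction, following the inductive scheme used for Theorem~\ref{mainT}. Let $D$ be a $\sigma$-generic front for $L$ with $c$ cusps and writhe $w$; rounding the cusps presents $D$ as an ordinary annular diagram with $[D]=H_L$ and $P_L=a^{-w}H_L$. Since $\textit{tb}(L)=w-\tfrac12 c$, the claimed bound $\textit{tb}(L)+|r(L)|\le -\deg_aP_L$ is equivalent to
\[
\deg_a H_D \le \tfrac12 c(D) - |r(L)|.
\]
Reversing the orientations of all components of $L$ fixes $c$ and the HOMFLY-PT element $H_D$ but negates $r$, so it suffices to prove $\deg_a H_D\le\tfrac12 c(D)-|r(D)|$ for every $\sigma$-generic front $D$.

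I would prove this by induction on a complexity of $D$. As in the proof of Theorem~\ref{mainT}, the number of crossings alone is not enough: one needs a lexicographic refinement whose minimal values are attained by disjoint unions of basic fronts $A_{\pm m}$ together with standard $2$-cusp unknots. On those the bound is immediate and sharp: for $A_\lambda A_{-\mu}$ one has $H=P$ equal to a single Turaev basis monomial, so $\deg_a H=0=\tfrac12\cdot 0-0$; each once-around circle $A_{\pm1}$ contributes $\deg_a 0$ and no cusps; and each standard unknot is removed by skein relation (iii) at the cost of a factor $(a-a^{-1})/z$ of $a$-degree $1$, exactly matching its budget $\tfrac12\cdot 2=1$.

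For the inductive step I would split into two cases. If $D$ has no cusps it is a disjoint union of circles winding around the annulus; applying skein relation (i) at a crossing gives $H_D=H_{D'}\pm z\,H_{D_0}$, where the oriented resolution $D_0$ is again cuspless with one fewer crossing and the same $r$, so the inductive hypothesis bounds $\deg_a(z\,H_{D_0})$, while the crossing change $D'$ is handled by the "additional algorithm for cuspless fronts" of Theorem~\ref{mainT}, which uses skein relation (i) and the Legendrian Reidemeister moves (regular isotopy does not change $H_D$) to push $D'$ toward a product of basic fronts, strictly decreasing the secondary complexity; none of these moves involves $a$, consistent with the vanishing budget $\tfrac12\cdot 0-0=0$ in the cuspless case. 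If $D$ has a cusp I would instead perform a local reduction near a right cusp as in \cite{R}, which strictly decreases the complexity and realizes the removal of two cusps either as peeling off a contractible unknot (relation (iii), $a$-degree $1$) or as smoothing a zigzag to a curl (relation (ii), factor $a^{\pm1}$); a short sign analysis shows that a "down" zigzag smooths to a negative curl and an "up" zigzag to a positive one, so the change in $\deg_a$ is at most $1-\big(|r(D)|-|r(D')|\big)$ and the bound is preserved.

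I expect the main obstacle to be the design of the complexity and of the cuspless-reduction algorithm: one must exhibit a lexicographic measure so that every term produced by the skein recursion — both the oriented resolution and the crossing change — strictly decreases it, the process terminates at disjoint unions of basic fronts, and the measure is simultaneously compatible with the $a$-degree/cusp-count/rotation-number bookkeeping above. This is exactly the bundle of complications already flagged for Theorem~\ref{mainT}; the subsidiary technical point is the sign computation of how an isolated zigzag smooths to a curl, since that is what makes the cusp reduction respect the bound when $r(L)\neq 0$.
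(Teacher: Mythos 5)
Your reformulation of the bound as $\deg_a H_L \le c(L)-|r(L)|$ (equation (\ref{eq:altEst})), the induction on a refined complexity of fronts, and the base case of products of basic fronts are exactly the skeleton of the paper's proof, but the two inductive steps as you describe them contain genuine gaps. In the cuspless case you resolve an arbitrary crossing and propose to dispose of the crossing-change term $D'$ by ``the additional algorithm''; but a crossing change is not an operation on front diagrams (the over-strand is dictated by slope), it does not decrease any complexity you have specified, and the HOMFLY-PT recursion does not terminate under crossing changes alone --- this is precisely the difficulty the actual argument is organized to avoid. The paper first applies the word-area--preserving braid manipulation from Case 1 of the proof of Lemma~\ref{lem:mainL} to Legendrian isotope a cuspless front either into a product of basic fronts or into a front containing a doubled crossing $\sigma_s\sigma_s$; at a doubled crossing the crossing change becomes a Reidemeister II simplification, so relations (i) and (ii) combine into equation (\ref{eq:pf}), which expresses $H$ in terms of three fronts of strictly smaller word area, with the factor $a^{-1}$ arising in the antiparallel case exactly offset by the extra right cusp ($c(F_3)=c(F)+1$). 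Relatedly, your claim that the oriented resolution $D_0$ is ``again cuspless with the same $r$'' fails at crossings of oppositely oriented strands: there the smoothing acquires a cusp pair, and this is exactly where the interplay between $\deg_a$, the cusp count, and $r$ has to be checked rather than asserted to ``not involve $a$.''

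In the cusped case your bookkeeping rests on a false premise: in the front projection a stabilization \fig{Zig.eps} contains no crossing, and rounding its two cusps yields an arc regular isotopic to a plain strand, so deleting it leaves $H$ unchanged --- there is no curl and no $a^{\pm1}$ from relation (ii). (You may be thinking of the Lagrangian projection, where stabilizations do appear as curls.) The correct accounting, as in the paper, is simply $H_F=H_{F_0}$ with $c(F_0)=c(F)-1$ and $|r(F_0)|=|r(F)|\pm1$, so the budget $c-|r|$ does not increase; your proposed sign analysis of ``zigzag smooths to a curl'' is therefore both unnecessary and incorrect in this convention. Finally, you explicitly defer ``the design of the complexity and of the cuspless-reduction algorithm,'' but that design --- the word area, the braid-word algorithm producing $\sigma_s\sigma_s$, and the cusp algorithm that uses skein moves $l_m\sigma_{m+1}\leftrightarrow l_{m+1}\sigma_m$ (valid modulo smaller-area terms) together with Legendrian isotopies until a stabilization or a split unknot appears --- is the actual content of the proof; as written, your proposal is a plan whose decisive steps are missing or misstated.
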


\begin{remark}  A proof of Theorem~\ref{chmutov} is given at the end of Section 6.  
\end{remark}

\begin{figure}
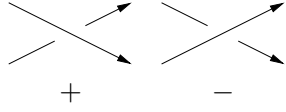

\[\begin{array}{cc}
\includegraphics[height=5.2ex]{HSR1.eps} & \includegraphics[height=5.2ex]{HSR2.eps} \\
+ & - 
\end{array}\]
\caption{A positive crossing and a negative crossing.
}
\label{fig:writhe}
\end{figure}

In $\R^3$ there is a strong connection between an analogous bound and the $2$-graded ruling polynomial \cite{R}.  Namely, $R^2(z)$ is equal to the coefficient of $a^{-\textit{tb}(L)}$ in $P_L$.  (Here we use the convention that the unknot is normalized to $(a-a^{-1})/z$.)  As a consequence, $L$ has a $2$-graded ruling if and only if  $\textit{tb}(L) = - \mbox{deg}_aP_L$.

Analogous results in $J^1(S^1)$ can be obtained provided we specialize the HOMFLY-PT polynomial using the inner product from section 5.  Specifically, for any $L \subset J^1(S^1)$ we let $\widehat{P}_L(a,z) $ be the image of $P_L(a,z, A_i)$ under the $\Z[a^{\pm 1}, z^{\pm 1}]$-module morphism $\C \rightarrow \Z[a^{\pm 1}, z^{\pm 1}]$ defined on Turaev's basis according to
\[
A_{\lambda} A_{-\mu} \mapsto (A_\lambda, A_\mu).
\]

Explicitly, 
\[\displaystyle
P_L(a,z,A_i) = \sum_{\lambda, \mu} c_{\lambda, \mu}(a,z) A_\lambda A_{-\mu} \mapsto
\]
\[
\widehat{P}_L(a,z) = \sum_{\lambda, \mu} c_{\lambda, \mu}(a,z) (A_\lambda, A_{\mu}) 
\displaystyle
=\sum_{\lambda, \mu} c_{\lambda, \mu}(a,z) R^2_{A_\lambda A_{-\mu}}(z).
\]

\begin{theorem} \label{mainT} For any Legendrian link $L \subset J^1(S^1)$ 
\[
R^2_L(z) = \mbox{coefficient of }\, a^{-\textit{tb}(L)} \, \, \mbox{in} \, \, \widehat{P}_L(a,z). 
\]
\end{theorem}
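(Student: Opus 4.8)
\textbf{Proof proposal for Theorem~\ref{mainT}.}
The plan is to follow the inductive strategy of \cite{R}, but with a complexity measure on $\sigma$-generic front projections of $L$ that is adapted to the annular setting. First I would set up the induction on a suitable notion of complexity: something like the pair consisting of the number of crossings plus cusps of the front, ordered lexicographically together with an auxiliary quantity that measures how far the front is from being (isotopic to) a product of basic fronts $A_i$. The key point of such a measure is that it must decrease under the moves we will use to simplify the front, and it must bottom out exactly at the base case. Since $\widehat P_L$ is a regular isotopy invariant of $L$ (being a specialization of $H_L$ times $a^{-w(L)}$, with the $a$-power keeping track of the Legendrian framing) and $R^2_L(z)$ is a Legendrian isotopy invariant by Theorem~\ref{RP}, the statement to be proved only depends on the Legendrian isotopy class; this is what allows us to replace $L$ by a more convenient representative at each inductive step.

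The base case is the essential new ingredient: when $L$ is a product of basic fronts $A_{i_1}\cdots A_{i_N}$, one must check directly that the coefficient of $a^{-\textit{tb}(L)}$ in $\widehat P_L$ equals $R^2_L(z)$. Here $\textit{tb}(L)=w(L)-\tfrac12(\#\text{cusps})$; for a product of basic fronts there are no cusps, so $\textit{tb}(L)=w(L)$, and the $a^{-w(L)}$ normalization means we are looking at the $a^0$ part of $H_L$. Since $H_L$ for such a link is literally the monomial $A_\lambda A_{-\mu}$ (after collecting positive and negative factors), applying the specialization $A_\lambda A_{-\mu}\mapsto R^2_{A_\lambda A_{-\mu}}(z)$ gives exactly $R^2_{A_\lambda A_{-\mu}}(z)$; and by Lemma~\ref{lem:Commute} this equals $R^2_L(z)$ regardless of the order in which the factors $A_{i_j}$ were stacked. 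So the base case reduces to Lemma~\ref{lem:Commute} together with the observation that the $a$-degree bookkeeping is trivial for cusp-free fronts. I would also need to verify that products of basic fronts are the \emph{only} fronts that survive all simplification moves, i.e.\ that any $\sigma$-generic front not of this form admits a complexity-reducing move.

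For the inductive step I would distinguish two situations. If $L$ has a cusp, I would use a move analogous to the one in \cite{R}: resolve or eliminate a rightmost cusp (or a crossing adjacent to it) via the skein relations, tracking how $H_L$, $w(L)$, and the cusp count change, and matching this against the recursion satisfied by $R^2$ under the corresponding resolution of normal rulings. The skein relation (i) expands a crossing into two simpler diagrams with a $z$ correction; on the ruling side, a crossing is either a switch (contributing a factor matching the $z$-term) or is smoothed in one of two ways, and the $p=2$ grading forces exactly the orientation-reversing pairings to be admissible, which is what makes the bookkeeping close up. If $L$ has no cusps but is not yet a product of basic fronts, then $L$ is a union of closed curves in the annulus with nonzero winding, and I would invoke the \emph{additional algorithm} promised in the introduction: a sequence of Reidemeister moves and skein-relation resolutions that pushes the diagram toward a product of the $A_i$ while strictly decreasing the complexity. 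In both cases the heart of the argument is checking that the $a$-degree filtration behaves correctly: one must show that the terms in the skein expansion which could a priori contribute to the $a^{-\textit{tb}}$ coefficient are precisely those corresponding to admissible $2$-graded rulings, and that the Chmutov--Goryunov bound (Theorem~\ref{chmutov}) guarantees no lower-order $a$-terms interfere.

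The main obstacle I expect is the cusp-free reduction algorithm and its interaction with the complexity measure. In $\R^3$ every front can be simplified down to a disjoint union of standard unknots, so the base case is a single normalization; here the base case is an infinite family (all products of all $A_i$ in all orders), and one must design the complexity measure carefully enough that (a) it is genuinely monotone under \emph{both} the cusp-elimination move and the new cusp-free move, and (b) its minima are exactly the products of basic fronts and nothing else. Getting these two requirements to hold simultaneously — while also keeping the $a$-degree and ruling-polynomial recursions synchronized at every step, and handling the non-commutativity of the Legendrian product (which is why Lemma~\ref{lem:Commute} rather than naive commutativity is needed) — is where the real work lies.
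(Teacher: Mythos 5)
Your outline reproduces the paper's architecture: a base case consisting of all products of basic fronts (handled exactly as you say, via Lemma~\ref{lem:Commute} and the observation that for a cusp-free product $H_L$ is literally the Turaev monomial $A_\lambda A_{-\mu}$, so the specialization returns $R^2_{A_\lambda A_{-\mu}}(z)$), plus an induction in which both $R^2$ and the coefficient of $a^{-\textit{tb}}$ are shown to obey a common set of skein relations, with the Chmutov--Goryunov estimate (Theorem~\ref{chmutov}) supplying the vanishing on stabilized fronts. So the shape is right. However, the proposal defers exactly the two steps that constitute the proof, and the one concrete suggestion you make for the first of them would fail. The complexity measure cannot be ``number of crossings plus cusps'' (even refined lexicographically by an unspecified auxiliary quantity): several moves the induction must use --- most importantly pushing a crossing from the right side of a left cusp into the region to its left, and the Reidemeister III manipulation preceding it --- leave the crossing and cusp counts unchanged. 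The paper instead decomposes a $\sigma$-generic annular front into elementary tangles $f_i \in \textit{Front}(N_{i-1},N_i)$ and inducts on the word area $\sum_i N_i$, which drops by $2$ when a crossing passes a left cusp and is preserved by the braid-relation isotopies, cyclic permutations, and the skein move; producing such a measure is precisely the ``more subtle'' ingredient the introduction advertises, and your proposal does not supply it.

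The second gap is the cusp-free algorithm, which you invoke but do not give. It is not enough to say the diagram is ``pushed toward a product of the $A_i$ while strictly decreasing complexity'': the paper writes the cusp-free front as a cyclic braid word and gives an explicit procedure (cyclic permutations and braid relations, all word-area preserving) that terminates either with a product of basic fronts or with a subword $\sigma_s\sigma_s$, at which point the relation of Lemma~\ref{lem:AltRel} re-expresses the front through diagrams of strictly smaller word area; termination itself requires an argument (in the problematic case the sum of the braid indices strictly increases while $s$ is bounded). Likewise, in the cusped case one needs the explicit algorithm of skein moves and isotopies that ends in smaller word area, a stabilization, or a split unknot, and one needs the verification that both $R^2$ and the coefficient function actually satisfy relations (i)--(iii) of Lemma~\ref{lem:skeinrel} (on the ruling side, (i) is a cancellation/bijection argument between rulings of the two left-cusp diagrams with the crossing unswitched; your sketch gestures at this but does not carry it out). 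Without a monotone complexity function whose minima are exactly the products of basic fronts and without these reduction procedures, what you have is a correct plan with the same skeleton as the paper's proof, not yet a proof.
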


\begin{example}  For the Legendrian $K$ with front diagram pictured in Figure ~\ref{fig:FrontExample} we have
\[
\textit{tb}(K) = 4; \,\,\, P_K(a,z, A_i) = a^{-4}[(1+z^2) A_2A_{-2}] + a^{-6} [z A_1^2A_{-2} + z^2 A_2A_{-2}];
\]
\[\widehat{P}_K(a,z) = a^{-4}(z^4 + 3 z^2 + 2) + a^{-6} (z^4 + 3 z^2); \,\,\, \mbox{and} \,\,R^2_K(z) = z^4 + 3z^2 + 2.
\]
\end{example}

\begin{corollary}  If a Legendrian link  $L \subset J^1(S^1)$ has a $2$-graded ruling, then $\textit{tb}(L)$ is  maximal among knots of the same smooth knot type as $L$. 
\end{corollary}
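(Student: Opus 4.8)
The plan is to deduce this immediately from Theorem~\ref{mainT} together with the Chmutov--Goryunov estimate (Theorem~\ref{chmutov}). The key point is that a $2$-graded ruling of $L$ exists precisely when $R^2_L(z) \neq 0$, and that this forces $\textit{tb}(L)$ to equal the a priori upper bound $-\deg_a P_L$.

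First I would observe that if $L$ admits a $2$-graded ruling $\rho$, then $R^2_L(z)$ contains the monomial $z^{j(\rho)}$ with a positive coefficient; since all powers of $z$ occurring in $R^2_L$ have the same parity and the coefficients are non-negative, no cancellation can occur and $R^2_L(z) \neq 0$. By Theorem~\ref{mainT}, $R^2_L(z)$ is the coefficient of $a^{-\textit{tb}(L)}$ in $\widehat{P}_L(a,z)$, so this coefficient is nonzero; hence $a^{-\textit{tb}(L)}$ appears with nonzero coefficient in $\widehat{P}_L$, and therefore
\[
\deg_a \widehat{P}_L(a,z) \geq -\textit{tb}(L).
\]
Next I would note that the specialization $P_L(a,z,A_i) \mapsto \widehat{P}_L(a,z)$ only substitutes for the $A_i$ variables and does not affect the variable $a$, so $\deg_a \widehat{P}_L \leq \deg_a P_L$ (it could only drop if the top $a$-terms happened to specialize to zero). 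Combining with Theorem~\ref{chmutov},
\[
-\textit{tb}(L) \leq \deg_a \widehat{P}_L \leq \deg_a P_L \leq -\textit{tb}(L),
\]
so all inequalities are equalities; in particular $\textit{tb}(L) = -\deg_a P_L$. Since $-\deg_a P_L$ depends only on the smooth knot type of $L$ and serves as an upper bound for $\textit{tb}$ on every Legendrian representative (again by Theorem~\ref{chmutov}), this shows $\textit{tb}(L)$ is maximal among Legendrian representatives of its smooth knot type.

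The only subtle point is the claim $\deg_a \widehat{P}_L \leq \deg_a P_L$: one must be sure that passing from the polynomial in the $A_i$ to the Laurent polynomial obtained by the non-multiplicative substitution $A_\lambda A_{-\mu} \mapsto R^2_{A_\lambda A_{-\mu}}(z) \in \Z[z^{\pm 1}]$ cannot raise the $a$-degree. This is immediate because the substituted values lie in $\Z[z^{\pm 1}]$, carrying no $a$ at all, so each monomial $c_{\lambda,\mu}(a,z)A_\lambda A_{-\mu}$ maps to something of $a$-degree at most $\deg_a c_{\lambda,\mu}(a,z) \leq \deg_a P_L$. I expect this to be the only place requiring a word of care; the rest is a direct chaining of the two cited theorems with the sign/parity remark about $R^2_L$.
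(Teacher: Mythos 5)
Your argument is correct and is exactly the intended deduction: the paper leaves this corollary as an immediate consequence of Theorem~\ref{mainT} and Theorem~\ref{chmutov}, precisely via the chain $-\textit{tb}(L) \leq \deg_a \widehat{P}_L \leq \deg_a P_L \leq -\textit{tb}(L)$ (using that $R^2_L$ has non-negative coefficients and that the specialization, taking values in $\Z[z^{\pm 1}]$, cannot raise the $a$-degree), so that $\textit{tb}(L) = -\deg_a P_L$ with the right-hand side a smooth-type upper bound for $\textit{tb}$ of every Legendrian representative. Nothing further is needed.
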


\begin{corollary}  The $2$-graded ruling polynomial, $R^2(z)$, cannot distinguish Legendrian links in $J^1(S^1)$ with the same smooth knot type and Thurston-Bennequin number.
\end{corollary}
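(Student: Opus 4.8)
The plan is to obtain this as an immediate consequence of Theorem~\ref{mainT}, once we observe that the specialized polynomial $\widehat P_L(a,z)$ is an invariant of the smooth link type of $L$ in the solid torus $J^1(S^1)$.

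First I would check that $\widehat P_L$ depends only on the smooth isotopy class of $L$ in $J^1(S^1)$. Recall that $P_L(a,z,A_i)$ is built from an annular diagram of $L$ by expanding its class in $\C$ in Turaev's basis $\{A_\lambda A_{-\mu}\}$ and multiplying by $a^{-w(L)}$; since $H_L$ is a regular isotopy invariant and the factor $a^{-w(L)}$ exactly absorbs the effect of Reidemeister~I moves, $P_L$ is unchanged under ambient isotopy of $L$ in the solid torus. The polynomial $\widehat P_L$ is the image of $P_L$ under the fixed $\Z[a^{\pm1},z^{\pm1}]$-linear map $\C\to\Z[a^{\pm1},z^{\pm1}]$, $A_\lambda A_{-\mu}\mapsto R^2_{A_\lambda A_{-\mu}}(z)$, so it inherits this invariance. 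Hence if two Legendrian links $L_1,L_2\subset J^1(S^1)$ represent the same smooth link type then $\widehat P_{L_1}(a,z)=\widehat P_{L_2}(a,z)$.

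Next I would assume in addition that $\textit{tb}(L_1)=\textit{tb}(L_2)$, and call this common value $t$. Comparing the coefficients of $a^{-t}$ on the two sides of $\widehat P_{L_1}=\widehat P_{L_2}$ and then applying Theorem~\ref{mainT} to $L_1$ and to $L_2$ yields $R^2_{L_1}(z)=R^2_{L_2}(z)$, which is the assertion.

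I do not anticipate a genuine obstacle: the deduction from Theorem~\ref{mainT} is formal. The only step needing (routine) attention is the invariance of $\widehat P_L$ under smooth isotopy noted above, and the only point worth stressing is that ``same smooth knot type'' must be read as same smooth link type inside the solid torus $J^1(S^1)$. It is precisely because the HOMFLY-PT skein module of the annulus properly contains that of the disc that the statement has content; the analogous assertion would fail if one remembered only the link type in $\R^3$ or $S^3$.
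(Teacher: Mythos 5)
Your proposal is correct and matches the paper's (implicit) reasoning: the corollary is stated as an immediate consequence of Theorem~\ref{mainT}, using exactly the observation that $\widehat P_L(a,z)$ is a smooth (solid-torus) link-type invariant because $P_L=a^{-w(L)}H_L$ is, and then reading off the coefficient of $a^{-\textit{tb}(L)}$. Your added remark that ``same smooth knot type'' must be taken inside $J^1(S^1)$ is the right reading of the statement.
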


\subsection{Proof of Theorem 6.3}

Let us introduce the notation $B_L(z)$ for the coefficient of $a^{-\textit{tb}(L)}$ in $\widehat{P}_L(a,z)$.  Note that $B_L(z)$ is a Legendrian isotopy invariant.  Using a corresponding specialization of $H_L$,  $B_L(z)$ is given as the coefficient of $a^{c(L)}$ in $\widehat{H}_L(a,z)$ where $c(L)$ is the number of right cusps of $L$.

The proof of Theorem ~\ref{mainT} is based on several lemmas.

\begin{lemma} \label{lem:BaseInd} $R^2_L(z) = B_L(z)$ whenever $L$ is a product of the basic fronts $A_m, m = \pm 1, \pm 2 \ldots$.
\end{lemma}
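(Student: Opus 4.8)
\textbf{Proof plan for Lemma~\ref{lem:BaseInd}.}
The plan is to compute both sides for $L = A_{i_1}\cdots A_{i_N}$ and check they agree. By Lemma~\ref{lem:Commute} the left side $R^2_L(z)$ depends only on the multiset of indices, so I may assume the factors are arranged as $A_\lambda A_{-\mu}$ for partitions $\lambda\vdash n$, $\mu\vdash n$ (both positive and negative indices must sum to the same $n$, since $R^2$ vanishes on links that are not null-homologous, and $B_L$ likewise picks out the null-homologous part). Theorem~\ref{the:RulComp2} then gives a closed formula for $R^2_{A_\lambda A_{-\mu}}(z)$. So the real content is to compute $\widehat{P}_{A_\lambda A_{-\mu}}(a,z)$ and extract the coefficient of $a^{-\textit{tb}}$.

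The key step is to understand the HOMFLY-PT polynomial of $A_\lambda A_{-\mu}$ as a \emph{smooth} solid torus link. Here I would argue that $A_\lambda A_{-\mu}$, viewed as a product of basic fronts and then smoothed, is already written in Turaev's basis: as an element of $\C$ it \emph{is} the basis element $A_\lambda A_{-\mu}$, i.e. $H_{A_\lambda A_{-\mu}} = A_\lambda A_{-\mu}$ (up to the regular-isotopy framing/writhe bookkeeping). This is essentially because the basic fronts $A_m$ are closures of positive braids $\sigma_1\cdots\sigma_{m-1}$ and stacking them realizes exactly the monomial $A_{i_1}\cdots A_{i_N}$ in the skein module with no correction terms. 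Consequently $\widehat{H}_{A_\lambda A_{-\mu}}(a,z) = (A_\lambda, A_\mu) = R^2_{A_\lambda A_{-\mu}}(z)$, which is independent of $a$. The writhe normalization converts $H$ to $P$ by a factor $a^{-w}$, and one checks $w(A_\lambda A_{-\mu}) = \sum(\lambda_i - 1) + \sum(\mu_j-1) = n - \ell(\lambda) + n - \ell(\mu)$ while the number of right cusps / the relevant shift matches $\textit{tb}(A_\lambda A_{-\mu})$ as computed from the front. So $\widehat{P}_{A_\lambda A_{-\mu}}(a,z) = a^{-w} R^2_{A_\lambda A_{-\mu}}(z)$ is a single monomial in $a$, and its exponent must be $-\textit{tb}$; hence $B_{A_\lambda A_{-\mu}}(z) = R^2_{A_\lambda A_{-\mu}}(z)$.

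Thus the argument reduces to three bookkeeping checks: (1) that the smoothed product of basic fronts equals the Turaev basis monomial $A_\lambda A_{-\mu}$ in $\C$ with no lower-order terms — this is the place I would be most careful, since it uses that distinct $A_{\pm m}$ factors, when stacked and smoothed, do not interact via skein relations beyond producing the monomial itself; (2) the writhe count $w(A_\lambda A_{-\mu})$ from the braid presentation; and (3) the Thurston-Bennequin computation $\textit{tb}(A_\lambda A_{-\mu}) = \mathrm{writhe} - \frac12(\#\text{cusps})$ from the front diagram, matching $w$ so that the single surviving power of $a$ is indeed $a^{-\textit{tb}}$. Step (1) is the main obstacle: it requires knowing that the front diagrams $A_m$, which have crossings that are all of one sign, contribute exactly one basis monomial and that stacking is compatible with the product structure on $\C$ that Turaev's theorem refers to. Once these are in place, the equality $R^2_L = B_L$ follows by combining Theorem~\ref{the:RulComp2} with the definition of $\widehat{P}_L$.
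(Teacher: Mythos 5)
Your proposal is correct and follows essentially the same route as the paper: reduce to $L = A_\lambda A_{-\mu}$ via Lemma~\ref{lem:Commute}, observe that $H_L$ is literally the Turaev basis monomial so that $\widehat{H}_L = (A_\lambda, A_\mu) = R^2_{A_\lambda A_{-\mu}}(z)$, and then match the $a$-power. Your explicit writhe versus $\textit{tb}$ bookkeeping is just a longer version of the paper's observation that, since $L$ has no cusps, $B_L$ is the coefficient of $a^{0}$ in $\widehat{H}_L$; and your worry in step (1) is a non-issue, since Turaev's basis elements are by definition the (smoothed) monomials in the basic fronts, so no skein relations are ever applied.
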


\begin{proof} 
From Lemma ~\ref{lem:Commute} we know that $R^2_L$ is independent of the ordering of the factors.  This is immediate for $B_L$, so we may assume that $L = A_\lambda A_{-\mu}$.  
Then, $H_L = A_\lambda A_{-\mu}$ and so by the definition of the specialization we have
\[
\widehat{H}_L= (A_\lambda, A_{\mu}) = R^2_{A_\lambda A_{-\mu}}(z).
\]
Since $L$ has no cusps the result follows.
\end{proof}

\begin{lemma} \label{lem:skeinrel}  Both $R^2(z)$ and $B(z)$ satisfy skein relations
\begin{equation*}
\tag{i} \figg{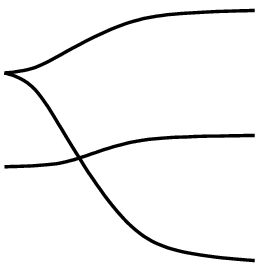} - \figg{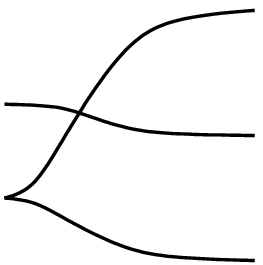} = z\left( \delta_1 \figg{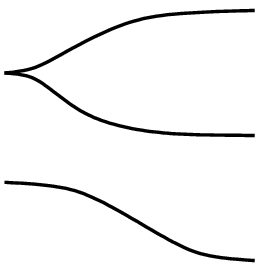} - \delta_2 \figg{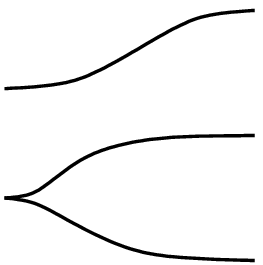}\right)
\end{equation*}
\begin{equation*}
\tag{ii}  \fig{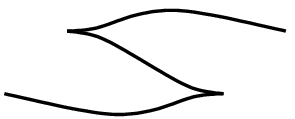} = \fig{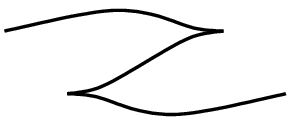} = 0
\end{equation*}
\begin{equation*}
\tag{iii}  K \bigsqcup \fig{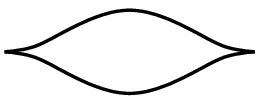} = z^{-1} K.
\end{equation*}
In (i), $\delta_1$ (resp. $\delta_2$) is $1$ when the crossing in the first (resp. second) term on the LHS is positive and $0$ if it is negative.
\end{lemma}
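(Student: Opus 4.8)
The plan is to verify each of the three relations separately for $B(z)$ and for $R^2(z)$; since every relation is local, in each case it suffices to argue inside the disk where the pictured fronts differ. On the $B(z)$ side the key point is that the specialization $\C \to \Z[a^{\pm1},z^{\pm1}]$, $A_\lambda A_{-\mu}\mapsto R^2_{A_\lambda A_{-\mu}}(z)$, is $\Z[a^{\pm1},z^{\pm1}]$-linear, so the defining relations (i)--(iii) of $\C$ from Section~5 hold verbatim for $\widehat H_L$. I would deduce each relation of the lemma for $B(z)$ by applying the matching relation of Section~5 to $\widehat H_L$ and then extracting the coefficient of $a^{c(L)}$, tracking (a) the change in the writhe, which the normalization $P_L = a^{-w(L)}H_L$ turns into explicit powers of $a$, and (b) the change in the number of right cusps $c(L)$, which shifts the power of $a$ one extracts (recall $\mathit{tb}(L)=w(L)-c(L)$ for a closed front, so $B_L$ is the coefficient of $a^{c(L)}$ in $\widehat H_L$). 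For relation (i) this amounts to rewriting $P_+-P_-=zP_0$ in terms of $H$, splitting into the two local orientation patterns at the crossing --- parallel strands, where the oriented resolution adds no cusp, versus antiparallel strands, where it adds a right cusp and a left cusp --- and reading off in each case which resolution term and which sign survive, which is exactly the data encoded by $\figg{LegSR3.eps}$, $\figg{LegSR4.eps}$ and the indicators $\delta_1,\delta_2$. The terms that would spoil the identity are those calling for a coefficient of $a$ strictly above the top $a$-degree of $\widehat H$; these vanish by the Chmutov--Goryunov bound $\deg_a\widehat P_L\le -\mathit{tb}(L)$ (equivalently $\deg_a\widehat H_L\le c(L)$, valid since specialization does not raise the $a$-degree), which also produces the vanishing in (ii) and, after dividing by $(a-a^{-1})/z$, the factor $z^{-1}$ in (iii).

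On the $R^2(z)$ side the arguments are combinatorial. For relation (i) I would fix the distinguished crossing $c$ and partition the set $\Gamma^2$ of $2$-graded normal rulings of the left-hand front according to whether $c$ is a switch of $\rho$ or not. A ruling for which $c$ is \emph{not} a switch should restrict, outside a neighborhood of $c$, to a $2$-graded ruling of one of the resolutions; a ruling for which $c$ \emph{is} a switch restricts to a $2$-graded ruling of the other local resolution, now with one fewer switch. In each case one must check that the orientation-reversing requirement defining a $2$-graded ruling and the normality condition transfer correctly under the restriction, that these restrictions are bijections, and then compute how $j(\rho)=\#(\text{switches})-\#(\text{right cusps})$ changes: it is preserved up to the shift caused by the loss of a switch and by any change in the right-cusp count, which produces the power of $z$. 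The forbidden local orientation patterns --- where a resolution is not an honest oriented front, or where no $2$-graded ruling can have $c$ as a switch --- are precisely those in which a resolution term drops out, which is what $\delta_1,\delta_2$ record. Relations (ii) and (iii) for $R^2$ are then quick: a front containing $\fig{Zig.eps}$ or $\fig{Zag.eps}$ admits no continuous fixed-point-free orientation-reversing involution on the relevant strands, so $\Gamma^2$ is empty and $R^2\equiv 0$; while a split copy of $\fig{UFO.eps}$ forces exactly one new switchless companion pair and contributes exactly one right cusp, so every ruling of $K\sqcup \fig{UFO.eps}$ has $j$ one smaller than the corresponding ruling of $K$, whence $R^2_{K\sqcup \fig{UFO.eps}}(z)=z^{-1}R^2_K(z)$.

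The step I expect to be the main obstacle is relation (i) on the $R^2(z)$ side: one has to run through every local orientation pattern at the crossing, verify in each that the two resolutions are genuine front diagrams and that the switch/non-switch restrictions are normality-preserving bijections, and confirm that the net change in $j(\rho)$ comes out exactly right, so that the resulting identity for $R^2$ matches the identity for $B$ term for term and sign for sign --- reproducing the same indicators $\delta_1,\delta_2$ and the same choice between $\figg{LegSR3.eps}$ and $\figg{LegSR4.eps}$. The remaining work is routine bookkeeping of writhes, cusps, and $a$-degrees.
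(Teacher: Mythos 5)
Your treatment of $B(z)$ is essentially the paper's: relations (i) and (iii) are pulled back through the $\Z[a^{\pm1},z^{\pm1}]$-linear specialization from the HOMFLY-PT skein relations, and (ii) comes from the Chmutov--Goryunov bound (Theorem~\ref{chmutov}), which is exactly how the paper disposes of that half. The same goes for your (iii) on the ruling side: the disjoint unknot component is forced by the cusp condition to be paired with itself, contributing one right cusp and no switches, hence the factor $z^{-1}$.

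The genuine gap is in your argument for relation (i) on the $R^2(z)$ side. You partition the rulings of ``the left-hand front'' (singular) at the distinguished crossing $c$ into switched and non-switched, and send the non-switched rulings to one of the right-hand resolutions and the switched rulings to the other. That correspondence does not exist and would in any case prove the wrong identity: when $c$ is not a switch the two ruling sections pass transversally through the crossing, and there is no $j$-preserving way to view such a ruling as a ruling of a crossingless diagram. The left-hand side of (i) is a \emph{difference} of two fronts (the crossing on either side of the left cusp, i.e.\ the two crossing signs after rounding), and the mechanism is: (a) rulings in which $c$ is \emph{not} switched are in a natural $j$-preserving bijection between these two fronts, so their contributions cancel in the difference; (b) the $2$-graded condition allows $c$ to be a switch in at most one of the two fronts (this is what the indicators $\delta_1,\delta_2$ record), and resolving that switch into horizontal arcs gives a bijection with the rulings of the single surviving right-hand term, with exactly one fewer switch --- whence the factor $z$. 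Without the cancellation step (a) and the ``only one side can switch'' step (b), the bookkeeping you propose cannot reproduce the stated relation. (Your justification of (ii) for $R^2$ --- that no orientation-reversing involution exists on the zigzag strands --- is also not the real obstruction; ungraded rulings of stabilized fronts do exist, and ruling out $2$-graded ones uses the normality condition together with the grading, as in the reference [R] the paper cites.)
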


\begin{remark}  Although the orientations are not pictured they are assumed to agree (outside of the pictured portion) in the terms on the LHS of (i).  Whichever term on the RHS has coefficient $\delta_i \neq 0$ is assumed to be oriented in agreement with the terms on the LHS.
\end{remark}

\begin{proof}
The proof is the same as in \cite{R} and will only be sketched here.

To see that $R^2$ satisfies (i), observe that for the two diagrams appearing on the LHS there is a bijection between those rulings where the visible crossing is not switched.  Terms corresponding to these rulings cancel.  Due to the $2$-graded condition only one of the fronts on the LHS can have rulings with the crossing switched.  These remaining rulings are in bijection with the rulings of the term on the RHS with $\delta_i \neq 0$.

For $B_L$, (i) and (iii) follow from the HOMFLY skein relations  and (ii) follows from Theorem ~\ref{chmutov}.
\end{proof}

The proof of Theorem ~\ref{mainT} is then completed by

\begin{lemma} \label{lem:mainL}  A Legendrian isotopy invariant function
\[
\mathcal{F} : \{ \mbox{Annular front diagrams }  \} \rightarrow \Z[a^{\pm 1} , z^{\pm 1}]
\]
satisfying the relations of Lemma ~\ref{lem:skeinrel} is uniquely determined by its values on products of the basic fronts, $A_i, i = \pm 1, \pm 2, \ldots$.
\end{lemma}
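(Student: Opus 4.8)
The strategy is a standard skein-theoretic induction: assign to every annular front diagram a measure of complexity, show that the relations of Lemma~\ref{lem:skeinrel} allow us to express $\mathcal{F}(L)$ for any $L$ in terms of $\mathcal{F}$ evaluated on strictly simpler diagrams, and observe that the base case — diagrams of minimal complexity — consists precisely of products of the basic fronts $A_i$. Uniqueness then follows, since at each step the value of $\mathcal{F}$ is forced by its values on the simpler pieces.

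\textbf{Setting up the induction.} First I would fix a $\sigma$-generic front diagram $L$ and define its complexity. The natural first-order measure is the number of cusps (equivalently the number of right cusps, since left and right cusps are equal in number for each component), and secondarily the number of crossings; one may also need the number of strands crossing a generic vertical line as a tie-breaker. If $L$ has a cusp, then near that cusp the diagram looks locally like one of the pictures in relation (ii), or can be brought to that form by Legendrian Reidemeister moves and planar isotopy not introducing vertical tangencies. Relation (ii) together with a Reidemeister~I-type move lets us cancel the cusp against a nearby crossing, or relation (iii) removes an unknotted closed loop (a ``zig-zag'' circle) contributing a factor $z^{-1}$; either way the number of cusps strictly decreases. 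Meanwhile, if $L$ has crossings, relation (i) rewrites $\mathcal{F}(L)$ as $\pm\mathcal{F}(L')$ plus $z$ times $\mathcal{F}$ of the two oriented resolutions, where $L'$ is the diagram with the crossing changed and the resolutions have strictly fewer crossings; changing a crossing does not change the underlying framed-link-up-to-crossing-changes data, so by iterating crossing changes and resolutions we reduce to crossingless diagrams. A crossingless $\sigma$-generic annular diagram with no cusps is, up to Legendrian isotopy, a disjoint union of parallel copies of $A_1$ and $A_{-1}$ together with some null-homotopic cusp-free circles — but a cusp-free circle in the annulus must in fact be an $A_{\pm 1}$ (a cusp-free embedded curve has no vertical tangency, hence wraps nontrivially), so crossingless cusp-free diagrams are exactly products of $A_{\pm 1}$'s. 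Combined with the cusp-reduction step, every diagram reduces to a $\Z[a^{\pm1},z^{\pm1}]$-linear combination of products of basic fronts.

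\textbf{The main obstacle.} The delicate point is verifying that the complexity genuinely decreases at every application of a relation and that the reduction process terminates — in particular, relation (ii) as stated requires the cusp to sit in a standard local position, and arranging this via Reidemeister moves can temporarily increase the number of crossings. I would handle this by choosing the complexity function as a lexicographically ordered tuple (number of cusps first), so that a move which removes a cusp at the expense of adding finitely many crossings still strictly decreases complexity, and then showing that once the cusp count is fixed the crossing-reduction via relation (i) terminates because each resolution strictly drops the crossing number and each crossing change is part of a finite ``unknotting-in-the-solid-torus'' sequence that reduces the diagram to a crossingless one in its homotopy class. A secondary subtlety is that relation~(i) only lets us change a crossing, not resolve it in both ways independently, so one must argue as in \cite{R} that the resolutions appearing have strictly smaller complexity; this is where the grading of $\C$ by $H_1(S^1)$ and the $\sigma$-generic hypothesis keep the bookkeeping finite. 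Since all of this precisely parallels the argument in \cite{R} for $\R^3$, with the single new input that cusp-free crossingless annular diagrams are products of $A_{\pm1}$ rather than (multiples of) the unknot, the proof goes through.
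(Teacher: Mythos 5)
There is a genuine gap, and it is at the heart of your strategy: you treat relation (i) of Lemma~\ref{lem:skeinrel} as if it were the topological HOMFLY-PT relation, i.e.\ as a tool to ``change a crossing'' and pick up $z$ times resolutions, and you then aim to reduce every diagram to crossingless ones (products of $A_{\pm1}$). Neither step is available in the Legendrian category. A front diagram has no over/under data to change --- the strand of lesser slope is always on top --- so there is no ``diagram with the crossing changed.'' The two fronts on the left of relation (i) are not a crossing change: they are the two ways a crossing can sit next to a left cusp ($l_m\sigma_{m+1}$ versus $l_{m+1}\sigma_m$), i.e.\ a \emph{skein move} that slides a crossing past a cusp, with the resolved fronts as correction terms. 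Consequently your reduction to crossingless diagrams cannot be carried out with the given relations, and in fact it must not be possible: the statement of the lemma requires the values on \emph{all} products of the basic fronts $A_i$, $i=\pm1,\pm2,\ldots$, which have crossings, precisely because cusp-free fronts (closures of positive braid words) cannot in general be simplified further. This enlargement of the base case is flagged in the introduction as one of the main new features compared to \cite{R}, and your proposal collapses it back to $A_{\pm1}$.

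Relatedly, your complexity function and termination argument do not work as stated. Relation (ii) is not a device for cancelling a cusp against a crossing; it says that a front containing a stabilization (zig-zag) is sent to $0$, and one must first \emph{arrive} at a stabilized front by an explicit algorithm. The paper's proof instead measures fronts by the \emph{word area} $\sum_i N_i$ of an elementary-tangle decomposition and runs two algorithms: for cusp-free fronts, a braid-word manipulation (cyclic permutation and braid relations, which preserve word area) either exhibits the front as a product of basic fronts $A_i$ or produces a double crossing $\sigma_s\sigma_s$, to which the derived relation of Lemma~\ref{lem:AltRel} applies with all right-hand terms of strictly smaller area; for fronts with cusps, a sequence of area-preserving skein moves and Legendrian isotopies either decreases the word area or produces a stabilization (value $0$ by (ii)) or a split unknot component (handled by (iii)). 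The skein move changes neither the number of cusps nor the number of crossings, so your lexicographic measure (cusps, then crossings) does not decrease under it, and without the braid algorithm and the $\sigma_s\sigma_s$ relation you have no mechanism to handle cusp-free fronts at all. To repair the proposal you would need to replace the crossing-change/resolution scheme by this kind of word-area induction with the skein move and the double-crossing relation as the basic reduction steps.
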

 
The proof of Lemma ~\ref{lem:mainL} is by induction on the value of a certain complexity function on front diagrams described in the following subsection.  First we record some additional relations which follow from Lemma ~\ref{lem:skeinrel}.

\begin{lemma} \label{lem:AltRel} A Legendrian isotopy invariant satisfying the relations of Lemma ~\ref{lem:skeinrel} also satisfies
\[
\fig{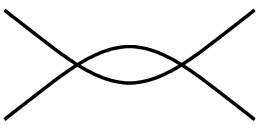} = \fig{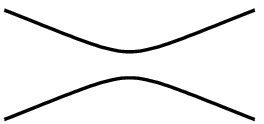} + z\left( \delta_1 \fig{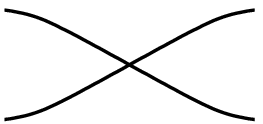} - \delta_2 \fig{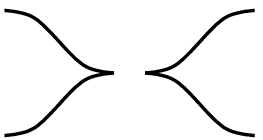} \right).
\]
where  $\delta_1$ (resp. $\delta_2$) is $1$ (resp. $0$) when the crossings in term on the LHS are positive and $0$ (resp. $1$) if they are negative.

\end{lemma}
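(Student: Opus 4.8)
The plan is to derive the two–crossing relation purely formally from the one–crossing skein relation (i) of Lemma~\ref{lem:skeinrel} together with relation (ii), without any further geometric input. The picture on the left-hand side shows a clasp: two strands crossing twice in succession (the two crossings having the same sign). I would view the left clasp as the ``positive'' resolution of the \emph{upper} of its two crossings and apply relation (i) there. This replaces the left-hand diagram by: (a) the diagram in which that upper crossing is switched to the opposite crossing — which now produces a Reidemeister–II pair that cancels, leaving the identity-looking diagram (the first term on the right-hand side of Lemma~\ref{lem:AltRel}); plus (b) the $z$-term, in which the upper crossing is smoothed (oriented resolution). The sign bookkeeping here is exactly the $\delta_1$ versus $\delta_2$ alternative: if the clasp crossings are positive the smoothing appears with a $+z$ and the ``switched'' term is the one that simplifies via Reidemeister II; if they are negative the roles reverse, which is precisely the statement ``$\delta_1$ is $1$ (resp.\ $0$) when the crossings are positive and $0$ (resp.\ $1$) if negative.''

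Concretely, the key steps in order are: first, isolate the upper crossing of the clasp and write down the instance of relation (i) with that crossing as the distinguished one, keeping careful track of which of the two terms on the left of (i) is the clasp and which is the ``uncrossed'' diagram. Second, observe that the ``uncrossed'' diagram obtained this way contains an embedded Reidemeister~II bigon (the remaining single crossing of the original clasp, now paired with its undoing), so by regular isotopy invariance of $\mathcal F$ it equals the first term on the right-hand side of the asserted identity. Third, identify the $z$-term of (i): smoothing the upper clasp crossing yields one of the two pictured one-strand-local diagrams on the right-hand side of Lemma~\ref{lem:AltRel}, with coefficient $\delta_1$ or $\delta_2$ according to the crossing sign, while the other of the two pictured diagrams arises symmetrically if one instead starts from the opposite clasp; since only one of $\delta_1,\delta_2$ is nonzero in either case this produces exactly the stated right-hand side. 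Fourth, if any of the resulting local pictures is a ``zig-zag'' of the form appearing in relation (ii), it contributes $0$, which is consistent with the claimed formula.

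The only real subtlety — and the step I expect to require the most care — is the orientation and sign bookkeeping: making sure that after smoothing the chosen crossing the orientations on the two local strands match those drawn in Lemma~\ref{lem:AltRel}, and that the $\delta_i$ convention as stated in Lemma~\ref{lem:skeinrel}(i) (where $\delta_1,\delta_2$ refer to the two \emph{terms} on the left of the one-crossing relation) is correctly translated into the $\delta_1,\delta_2$ convention of Lemma~\ref{lem:AltRel} (where they refer to the sign of the \emph{clasp} crossings). This is a finite check over the handful of orientation configurations of a two-strand clasp, and in each case the Reidemeister–II cancellation and the single smoothing term combine to give the displayed equation; because the argument is diagram-local and uses only relations already established, there is no new geometric content beyond this case analysis.
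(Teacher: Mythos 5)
There is a genuine gap, and it lies in how you read relation (i) of Lemma~\ref{lem:skeinrel}. You treat (i) as the usual one‑crossing HOMFLY-PT relation --- ``positive crossing minus the same diagram with the crossing switched equals $z$ times the oriented smoothing'' --- and then argue by switching the upper crossing of the clasp and cancelling a Reidemeister~II bigon. But the invariant $\mathcal{F}$ is only assumed to be a \emph{Legendrian isotopy} invariant of front diagrams satisfying the relations as stated, and relation (i) is not a crossing‑change relation at a single crossing: its two left-hand terms are the distinct fronts $l_m\sigma_{m+1}$ and $l_{m+1}\sigma_m$, i.e.\ a crossing sitting on one side or the other of a left cusp, and the right-hand terms are likewise fronts involving the cusp. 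Switching a crossing is not an operation on fronts at all (the over-strand is determined by slope), and no hypothesis of Lemma~\ref{lem:skeinrel} relates a front to a ``crossing-switched'' version of it. In particular, the clasp on the left of Lemma~\ref{lem:AltRel} has no cusp adjacent to its crossings, so relation (i) cannot be applied in place as you propose; likewise the ``Reidemeister~II cancellation'' you invoke is a smooth-category move applied to a diagram that is not even a front.

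What is missing is precisely the content of the paper's proof: a chain of Legendrian isotopies that first deforms the clasp into a front containing the local configuration $l_m\sigma_{m+1}$ (a crossing adjacent to a cusp), so that relation (i) genuinely applies, followed by further Legendrian isotopies that simplify the three resulting fronts to the pictures $\fig{LemmaSR2.eps}$, $\fig{LemmaSR3.eps}$, $\fig{LemmaSR4.eps}$ appearing in the statement; the $\delta_1/\delta_2$ bookkeeping then comes along for free from (i). Your overall strategy (derive the two-crossing identity from (i) plus invariance, with a sign case-check) is the right one and matches the intent of the paper, but without supplying these cusp-introducing and cusp-removing isotopies --- and without replacing the illegitimate ``crossing switch'' step by an application of (i) in its actual cusp-adjacent form --- the argument does not follow from the stated hypotheses.
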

\begin{proof}
\[
\fig{LemmaSR1.eps} = \fig{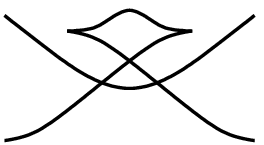} = \fig{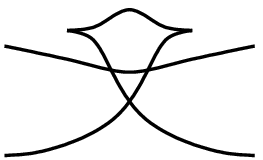} =
\]
\[
\fig{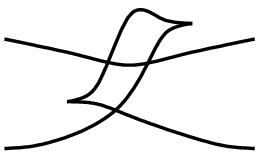} + z\left( \delta_1 \fig{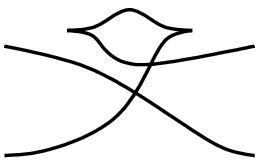} - \delta_2 \fig{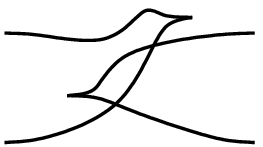} \right) =
\]
\[
\fig{LemmaSR2.eps} + z \left( \delta_1 \fig{LemmaSR3.eps} - \delta_2 \fig{LemmaSR4.eps} \right).
\]
The third equality is Lemma ~\ref{lem:skeinrel}, and the rest are Legendrian isotopies.
\end{proof}

\begin{remark}  Actually, the skein relations  given in Lemma ~\ref{lem:AltRel}  and Lemma ~\ref{lem:skeinrel} (i) are equivalent.
\end{remark}

\subsubsection{Setup for induction}

For non-negative integers $N$ and $M$ we let $\textit{Front}(N,M)$ denote the collection of front diagrams in a rectangle $[0, 1] \times \R$ with $N$ boundary points on $\{0\}\times \R$ and $M$ boundary points on $\{1\}\times \R$. 
 Given front diagrams $f_1 \in \textit{Front}(N_0,N_1)$ and $f_2 \in \textit{Front}(N_1, N_2)$ we may form their product $f_1 f_2 \in \textit{Front}(N_0, N_2)$ by rescaling the first coordinate and then identifying the right boundary of $f_1$ with the left boundary of $f_2$.  (This may involve modifying $f_1$ and $f_2$ a bit near their boundaries so that the boundary points fit together appropriately, but the result is well defined up to Legendrian isotopy.)  

\begin{definition}  A front diagram in $\textit{Front}(N, M)$ is called an {\it elementary tangle} if it contains a single crossing or cusp.
\end{definition}

We adopt the convention of labeling the boundary points of a tangle in $\textit{Front}(N,M)$ as $1, \ldots, N$ and $1,\ldots, M$ from top to bottom. We introduce notations for elementary tangles. $\sigma_m \in \textit{Front}(N,N)$ will denote a crossing between the strands with boundary points labeled $m$ and $m+1$.  
$l_m \in \textit{Front}(N, N+2)$ (resp. $r_m \in \textit{Front}(N+2, N)$) will denote a left (resp. right) cusp where the strands meeting at the cusp are labeled $m$ and $m+1$ at their boundary.  See Figure ~\ref{fig:ElemTan}.

After cutting along vertical lines any $\sigma$-generic annular front diagram  $F$ may be decomposed into a product of elementary tangles,
\[
F = f_1 f_2 \ldots f_n,  \,\,\, f_i \in \textit{Front}(N_{i-1}, N_i), \,\,\, N_0 = N_n.
\]
Here each $f_i$ is some $\sigma_m, l_m,$ or $r_m$.  The factors that appear in such a decomposition of $F$ are unique up to cyclic reordering.  $n$ is called the {\it word length} of $F$.  For our induction we need a slightly more refined measure of the complexity of a front.

\begin{definition}  Given a $\sigma$-generic annular front diagram $F$ as above define the {\it word area} of $F$
\[
\textit{Area}(F) = \sum_{i = 1}^n N_i.
\]
\end{definition}

\begin{figure}
\includegraphics[scale=.75]{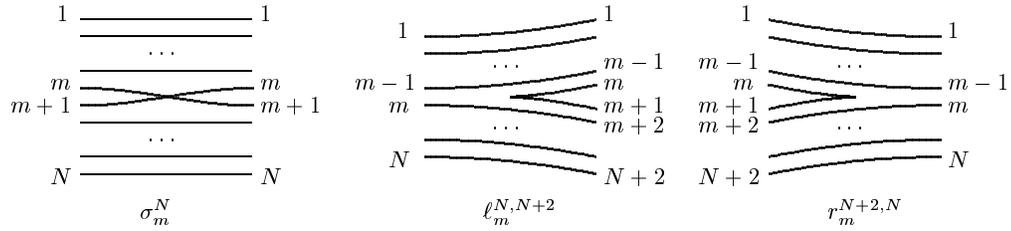}
\caption{ Elementary tangles}
\label{fig:ElemTan}
\end{figure}

\begin{example}  The basic front $A_m$ has word area $m(m-1)$.  The front pictured in Figure ~\ref{fig:AreaEx} has word area 14.   For each $N$ there are fronts with word area $0$ corresponding to the empty product in $\textit{Front}(N,N)$.  These are simply products of the basic fronts $A_1$ and $A_{-1}$. 

\begin{figure}
\includegraphics{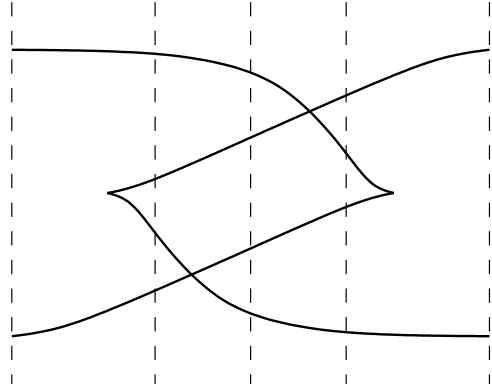}
\caption{An annular front diagram $F$ with $\textit{Area}(F)= 4+4+4+2= 14$.}
\label{fig:AreaEx}
\end{figure}

\end{example}

\begin{proof}[Proof of Lemma ~\ref{lem:mainL}] By induction on $\textit{Area}(F)$.  The base case follows from Lemma ~\ref{lem:BaseInd} since $\textit{Area}(F)=0$ implies $F$ is a product of $A_1$ and $A_{-1}$.  

For the inductive step, given an annular front $F$ we need to show  $\mathcal{F}(F)$ may be evaluated in terms of the values of $\mathcal{F}$ on basic fronts and fronts of lesser word area.  

\noindent {\bf Case 1.}  $F$ has no cusps.

We show that either $F$ is a product of the $A_i$, or we can find a front $F'$ Legendrian isotopic to $F$ so that $\textit{Area}(F) = \textit{Area}(F')$ and part of $F'$ has the form $\fig{LemmaSR1.eps}$.  In the latter case the result follows from Lemma ~\ref{lem:AltRel} as all the front diagrams on the RHS have lesser area than $F'$.

Write $F$ as a word in the $\sigma_m$.  We describe an algorithm to transform $F$ into the desired form using a combination of cyclic permutations and the braid relations 
\[
\sigma_k\sigma_l = \sigma_l \sigma_k, |k-l| \geq 2, \,\,\, \mbox{and} \,\,\, \sigma_i \sigma_{i+1} \sigma_i = \sigma_{i+1} \sigma_i \sigma_{i+1}
\]
both of which correspond to word area preserving Legendrian isotopies.

Assume that we have successfully modified $F$ to a front of the form 
\begin{equation}A_{i_1} \cdots A_{i_r} \overline{F},  r\geq 0 \label{eq:StForm}
\end{equation}
 (product of diagrams in the solid torus). 
If $\overline{F}$ is empty then we have a product of basic fronts and the work is complete.  
 
Else, write $\overline{F} = \sigma_1 \sigma_2 \cdots \sigma_s W, s\geq 0$.  We may always assume that $W$ contains at least one $\sigma_i$ with $i \leq s+1$.  If this is not the case than we could write $\overline{F}= A_{\pm (s+1)} G$ which allows us to absorb the first factor into the product in equation \eqref{eq:StForm} and replace $\overline{F}$ with $G$.

Now, $W$ has the form $\sigma_i W'$ and we proceed as follows:
\begin{itemize}
\item If $i > s+1$, commute  $\sigma_i$ with $\sigma_1 \sigma_2 \cdots \sigma_s$ and cyclicly permute it to get 
\[
\sigma_1 \sigma_2 \cdots \sigma_s \sigma_i W' \rightarrow \sigma_1 \sigma_2 \cdots \sigma_s W' \sigma_i.
\]
Replace $W$ with $W' \sigma_i$ and repeat. 

\item If $i = s+1$, increase $s$ to $s+1$ and repeat the argument with $W$ replaced by $W'$.

\item If $i = s$, then $\overline{F}$ contains $\sigma_s\sigma_s = \fig{LemmaSR1.eps}$ and the algorithm is complete.

\item If $i < s$, then
\[
\begin{array}{cccc}
\sigma_1 \sigma_2 \cdots \sigma_s \sigma_i W'& \rightarrow & \sigma_1 \cdots \sigma_i \sigma_{i+1} \sigma_i \cdots \sigma_s  W' &\rightarrow \\
\sigma_1 \cdots \sigma_{i+1} \sigma_{i} \sigma_{i+1} \cdots \sigma_s  W' &\rightarrow & \sigma_{i+1}\sigma_1 \cdots \sigma_i \sigma_{i+1}  \cdots \sigma_s  W' &\rightarrow \\
\sigma_1 \sigma_2  \cdots \sigma_s  W'  \sigma_{i+1}.  &&&
\end{array}
\]
Now, replace $W$ with $W'\sigma_{i+1}$ and repeat.  
\end{itemize}

It is clear that this procedure cannot loop indefinitely.  $s$ is bounded above, and every time the case $i <s$ occurs the sum of the indices of the $\sigma_i$ occuring in $\overline{F}$ is increased.  

\noindent {\bf Case 2.}  F has cusps.

The following is a slight modification of an argument from \cite{R}.

Note that if the result is known for fronts of lesser area then it is true for a diagram of the form $\cdots l_m\sigma_{m+1} \cdots= \figg{LegSR1.eps}$ if and only if it is true for $\cdots l_{m+1} \sigma_m= \figg{LegSR2.eps}$.  This follows from Lemma ~\ref{lem:skeinrel} since the diagrams appearing on the RHS have smaller area than the two on the LHS.  We will refer to the interchanging of $l_m\sigma_{m+1} $ with $\cdots l_{m+1} \sigma_m$ as a {\it skein move}.  Note that performing a skein move does not change the word area of a front.

This case is dealt with by describing an algorithm which uses a combination of skein moves and Legendrian isotopies to  reduce the word area of $F$ or arrange the front diagram to contain a stabilization \fig{Zig.eps} or a disjoint unknot component \fig{UFO.eps}.  
Whenever skein moves are applied during the algorithm the word area will be such that the inductive hypothesis applies to the corresponding diagrams \figg{LegSR3.eps} and \figg{LegSR4.eps} on the RHS of Lemma ~\ref{lem:skeinrel} so that they may be safely ignored.  In the case that the resulting diagram is stabilized the value of $\mathcal{F}$ is $0$ according to Lemma ~\ref{lem:skeinrel} (ii), and in the case we arrive at an unknot component the value of $\mathcal{F}$ is uniquely determined by Lemma ~\ref{lem:skeinrel} (iii) together with the inductive hypothesis.

The algorithm is nearly identical to Statement A of \cite{R}, but for the reader's convenience we include the argument here.  The reader is also refered to  Figure 1 of \cite{Ng} for an excellent pictoral description of the algorithm.

Begin by writing $F$ as a product of elementary tangles.  It must be the case that there exists a portion of this product which has the form $l_m W r_n$ where $W$ is a word consisting entirely of crossings.  ($F$ has cusps.  Therefore, it must contain both left cusps and right cusps, and one of these left cusps must appear adjacently to a right cusp.)  A cyclic permutation then transforms $F$ into a word of the form $l_m W r_n X$.

Now suppose we are given a word of the form $l_m W r_n X$ where $W$ is a word in the $\sigma_i$ which is written in the form
\[
W = \sigma_{m+1} \sigma_{m+2} \cdots \sigma_{m+s} W', \, \, \mbox{for some} \,\, s\geq 0.
\]

\noindent {\bf Subcase 1.} $W'$ is non-empty.

Then $W' = \sigma_i W''$ and we proceed as follows:
\begin{enumerate}
\item If $i < m-1$, then a Legendrian isotopy commutes $\sigma_i$ past $l_m \sigma_{m+1} \sigma_{m+2} \cdots \sigma_{m+s}$ and when it passes the cusp the word area decreases by $2$.
\item If $i= m-1$, we apply a Legendrian isotopy then a skein move
\[\begin{array}{cccc}
l_m \sigma_{m+1}  \cdots \sigma_{m+s} \sigma_{m-1} W'' \cdots & \rightarrow  & l_m  \sigma_{m-1} \sigma_{m+1}  \cdots \sigma_{m+s} W'' \cdots & \rightarrow \\
l_{m-1}  \sigma_{m} \sigma_{m+1}  \cdots \sigma_{m+s} W'' &&& 
\end{array}
\]
then repeat the argument with $W'$ replaced by $W''$.
\item If $i= m$, then when $s=0$ the front is Legendrian isotopic to a stabilized front, 
\[
\fig{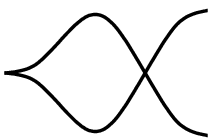} \rightarrow \figg{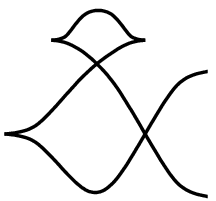} \rightarrow \figg{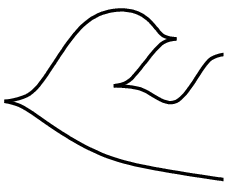}.
\]
When $s>0$ we can apply a Type II Legendrian Reidemeister move to decrease the word area
\[\begin{array}{cccc}
l_m \sigma_{m+1}  \cdots \sigma_{m+s} \sigma_{m} W'' \cdots & \rightarrow  & l_m  \sigma_{m+1} \sigma_m \cdots \sigma_{m+s} W'' \cdots & \rightarrow \\
l_{m+1}  \sigma_{m+2}  \cdots \sigma_{m+s} W'' &&& 
\end{array}
\]
\item If $m < i < m+s$ then we apply a Type III Reidemeister move and subsequently pass a crossing by the left cusp which decreases word area
\[\begin{array}{cccc}
l_m \sigma_{m+1}  \cdots \sigma_{m+s} \sigma_{i} W'' \cdots & \rightarrow  & l_m  \sigma_{m+1}  \cdots \sigma_i \sigma_{i+1} \sigma_i \cdots \sigma_{m+s} W'' \cdots & \rightarrow \\
l_m  \sigma_{m+1}  \cdots \sigma_{i+1} \sigma_{i} \sigma_{i+1} \cdots \sigma_{m+s} W'' \cdots &\rightarrow & \sigma_{i+1} l_m  \sigma_{m+1}  \cdots  \sigma_{m+s} W''.& 
\end{array}
\]  

\item If $i = m+s, s>0$ then we can apply $s$ successive skein moves followed by a Type II Reidemeister move to decrease word area,
\[\begin{array}{cccc}
l_m  \sigma_{m+1} \sigma_{m+2} \cdots \sigma_{m+s} \sigma_{m+s} \cdots & \rightarrow & l_{m+1}  \sigma_{m} \sigma_{m+2} \cdots \sigma_{m+s} \sigma_{m+s}\cdots & \rightarrow \\
l_{m+1} \sigma_{m+2} \cdots \sigma_{m+s} \sigma_{m+s} \sigma_m \cdots & \rightarrow & \ldots & \rightarrow \\
l_{m+s} \sigma_{m+s-1} \sigma_{m+s } \sigma_{m+s-2} \cdots \sigma_{m+1}\sigma_m \cdots  & \rightarrow & l_{m+s-1}  \sigma_{m+s-2} \cdots \sigma_{m+1}\sigma_m&
\end{array}\]

\item If $i = m+s+1$ replace $W'$ with $W''$ and repeat the algorithm.  
\item if $i > m+s+1$ then a Legendrian isotopy commutes $\sigma_i$ past $l_m \sigma_{m+1} \sigma_{m+2} \cdots \sigma_{m+s}$ and when it passes the cusp the word area decreases by $2$.
\end{enumerate}

\noindent {\bf Subcase 2.} $W'$ is empty. 

We are given a word of the form   $l_m  \sigma_{m+1} \sigma_{m+2} \cdots \sigma_{m+s} r_n \ldots$, for some $s \geq 0$
\begin{itemize}
\item If $n < m-1$ or $n > m+s+1$ then we can commute $r_n$ past $l_m  \sigma_{m+1} \sigma_{m+2} \cdots \sigma_{m+s} $ and reduce the word area.
\item If $n = m-1$, then 
\[
l_m  \sigma_{m+1} \sigma_{m+2} \cdots \sigma_{m+s} r_{m-1} \ldots \rightarrow l_m r_{m-1} \sigma_{m+1} \sigma_{m+2} \cdots \sigma_{m+s} \ldots
\]
which is stabilized.
\item If $n = m$ then when $s=0$ we have $l_m r_m \cdots$ which has an unknot component.  When $s >0$ we can apply a Type I Legendrian Reidemeister move and decrease word area.
\item If $m < n < m+s$ then we can apply a type II Reidemeister to decrease word area,
\[\begin{array}{cccc}
l_m \sigma_{m+1}  \cdots \sigma_{m+s} r_n \cdots & \rightarrow  & l_m  \sigma_{m+1}  \cdots \sigma_n \sigma_{n+1} r_n \cdots \sigma_{m+s} \cdots & \rightarrow \\
l_m  \sigma_{m+1}  \cdots r_{n+1} \cdots \sigma_{m+s} \cdots & & & 
\end{array}
\]  
\item If $ n=m +s, s>0$ then because of the presence of $\sigma_{m+s} r_{m+s}$ the front is Legendrian isotopic to a stabilization.
\item If $ n = m+s +1$ we can apply the skein move $s$ times to obtain
\[\begin{array}{cccc}
l_m  \sigma_{m+1} \sigma_{m+2} \cdots \sigma_{m+s} r_{m+s+1} \cdots & \rightarrow & l_{m+1}  \sigma_{m} \sigma_{m+2} \cdots \sigma_{m+s} r_{m+s+1}\cdots & \rightarrow \\
l_{m+1} \sigma_{m+2} \cdots \sigma_{m+s} r_{m+s+1} \sigma_m \cdots & \rightarrow & \ldots & \rightarrow \\
l_{m+s} r_{m+s+1} \sigma_{m+s -1} \sigma_{m+s-2} \cdots \sigma_m \cdots  &&&
\end{array}\]
which is stabilized.
\end{itemize}

\end{proof}

\subsection{Proof of Theorem \ref{chmutov}}

 In \cite{CG} a contactomorphism $J^1(S^1)\cong ST^*(\R^2)$ is used to treat Legendrian links as co-oriented plane curves, and their proof of Theorem~\ref{chmutov} is carried out in this context.  However, the version of $P_L$ used in \cite{CG} differs from ours in a non-trivial manner as the annulus within $J^1(S^1)$ which is used there for link projections differs from ours by a full twist.   
 We conclude by giving a proof of Theorem~\ref{chmutov} matching our conventions.  Our proof uses the front projection perspective and is based on the inductive method used in the proof of Lemma~\ref{lem:mainL}.  This is similar to the approach to Bennequin type inequalities in $\R^3$ appearing in \cite{Ng}.  

First, observe that for a Legendrian link $L \subset J^1(S^1)$ the inequality 
\begin{equation}
\label{eq:Est}
\textit{tb}(L)  + |r(L)| \leq -\deg_a P_L
\end{equation}
is equivalent to
\begin{equation}
\label{eq:altEst}
\deg_a H_L \leq c(L) - |r(L)|.
\end{equation}
Here, $H_L$ is computed using the front projection of $L$, and $c(L)$ denotes the number of right cusps appearing in the front projection.

Observe that this inequality trivially hold for products of the $A_i$ as both sides equal $0$.  We now establish (\ref{eq:altEst}) for a general front diagram $F$ by induction on $\textit{Area}(F)$.  The base case is covered by the previous remark.  

\noindent {\bf Case 1.}  $F$ has no cusps.

As in the proof of Lemma~\ref{lem:mainL}, after a   Legendrian isotopy either $F$ becomes a product of basic fronts, or we can modify $F$ to a front diagram $F'$ containing $\fig{LemmaSR1.eps}$.  In the latter case, use the HOMFLY-PT relations (i) and (ii) to compute $H_{F'}$ as
\begin{equation}
\label{eq:pf}
\fig{LemmaSR1.eps}  = \fig{LemmaSR2.eps} + z\left(\delta_1 \fig{LemmaSR3.eps} +  a^{-1} \delta_2 \fig{LemmaSR4.eps} \right).
\end{equation}
where exactly one of $\delta_1$ and $\delta_2$ is non-zero depending on the orientation of $F'$.   Denote the $3$ front diagrams appearing on the RHS as $F_1$, $F_2$, and $F_3$.   In general, $c(F) = c(F_1) = c(F_2) = c(F_3) -1 $ and $r(F) = r(F_1)$. Also,  if $\delta_1 \neq 0$ (resp. $\delta_2 \neq 0$) then $r(F) = r(F_2)$ (resp. $r(F) = r(F_3)$).  Thus, the inductive hypothesis applies to deduce that both non-zero terms on the RHS of (\ref{eq:pf}) have degree in $a$ less than or equal to $ c(F) - |r(F)|$.

\noindent {\bf Case 2.}  $F$ has cusps.
 
Provided the inductive hypothesis applies to fronts of lesser word area (\ref{eq:altEst}) holds for  a front containing $\figg{LegSR1.eps}$ if and only if it holds for the front $\figg{LegSR2.eps}$  obtained from a skein move.  The proof of Lemma~\ref{lem:mainL} contains an algorithm which makes use of a combination of word area preserving Legendrian isotopies and skein moves to  reduce the word area of $F$ or arrange the front diagram to contain a stabilization \fig{Zig.eps} or a disjoint unknot component \fig{UFO.eps}.  In the latter two cases, proceed as follows:

{\bf 1. } If $F$ contains a stabilization the inductive hypothesis will apply to the front diagram $F_0$ obtained from removing the pair of cusps.  We have $|r(F_0)| = |r(F)| \pm 1$ where the sign depends on the orientation of $F$.  Then, $H_F = H_{F_0}$, and
\[
\deg_aH_{F_0} \leq c(F_0) - |r(F_0)| = (c(F) -1) - (|r(F)| \pm 1) \leq  c(F) - |r(F)|.
\]

{\bf 2.}  If $F$ contains a disjoint unknot component \fig{UFO.eps} then deduce (\ref{eq:altEst}) from the inductive hypothesis and HOMFLY-PT relation (iii).


\begin{thebibliography}{EFM}

\bibitem[AM]{AM} A. K. Aiston and H. R. Morton, \textsl{Idempotents of Hecke algebras of type A, J. Knot Theory Ramifications} {\bf 7} (1998), 463-487.

\bibitem[CG]{CG} S. Chmutov and V. Goryunov, \textsl{Polynomial invariants of Legendrian links and wave fronts}, Topics in Singularity Theory. V.I.Arnold's 60th Anniversary Collection.  (A.Khovanskii, A.Varchenko and V.Vassiliev, eds.), \textsl{Amer. Math. Soc. Translations} {\bf 2}, Vol. 180, (1997), AMS, Providence, RI, 25-44.


\bibitem[ChP]{ChP} Yu. Chekanov and P. Pushkar', \textsl{Combinatorics of 
Legendrian Links and the Arnol'd 4-conjectures, Uspekhi Mat. Nauk}
{\bf 60} (2005), no. 1, 99-154, translated in \textsl{Russian Math. Surveys} 
{\bf 60} (2005), no. 1, 95-149.

\bibitem[F]{F} D. Fuchs, \textsl{The Chekanov-Eliashberg invariant of Legendrian knots: Existence of augmentations, J. Geom. Phys.} {\bf 47 } (2003), no. 1, 43-65.

\bibitem[FT]{FT} D. Fuchs, S. Tabachnikov, \textsl{Invariants of Legendrian
and transverse knots in the standard contact space, Topology}, 1997,
{\bf 36}, 1025--1053. 

\bibitem[HLR]{HLR} T. Halverson, R. Leduc, A. Ram, \textsl{Iwahori-Hecke algebras of type A, bitraces and symmetric functions, Int. Math. Res. Not.}  
(1997), No.9, 401-416.

\bibitem[Lu]{Lu} S. G. Lukac, \textsl{ Idempotents of the Hecke algebra become Schur functions in the skein of the annulus, Math. Proc. Camb. Phil. Soc.} {\bf 138} (2005), 79-96. 

\bibitem[Mac]{Mac} I. G. MacDonald, \textsl{ Symmetric functions and Hall polynomials}, Second Edition, Oxford University Press, 1995.

\bibitem[MM]{MM} H.R.Morton and P.M.G.Manchon, \textsl{Geometrical relations and plethysms in the Homfly skein of the annulus,  J. London Math. Soc.} {\bf 78} (2008), 305-328. 

\bibitem[Ng]{Ng} L. Ng, \textsl{A skein approach to Bennequin type inequalities,
Int. Math. Res. Not.} ( 2008), Art. ID rnn116. 

\bibitem[NgTr]{NgTr} L. Ng and L. Traynor, \textsl{Legendrian solid-torus links, J. Symplectic Geom.} {\bf 2} (2005), no. 3, 411-443.


\bibitem[R]{R}  D. Rutherford, \textsl{The Thurston-Bennequin number, Kauffman polynomial, and ruling invariants of a Legendrian link: The Fuchs conjecture and beyond, Int. Math. Res. Not.} (2006), Art. ID 78591.

\bibitem[St]{St} R. P. Stanley, \textsl{ Enumerative combinatorics, Volume 2}, Cambridge University Press, 2001.

\bibitem[Sw]{Sw} J. Swiatkowski , \textsl{On the isotopy of Legendrian knots, Ann. Global Anal. Geom.} {\bf 10} (1992), 195-207.

\bibitem[Ta]{Ta} S. Tabachnikov, \textsl{Calculation of the Bennequin invariant of a Legendrian curve by the geometry of its front,  Func. Anal. Appl.} {\bf 22, No 3} (1988), 89-90

\bibitem[Tr]{T1} L. Traynor, \textsl{Legendrian circular helix links,  Math. Proc. Cambridge Philos. Soc.} {\bf 122} (1997),  301-314.

\bibitem[Tu]{Tu1} V. Turaev, \textsl{The Conway and Kauffman modules of a solid torus,  Zap. Nauchn. Sem. Leningrad. Otdel. Mat. Inst. Steklov. (LOMI)} {\bf 167} (1988) Issled. Topol. {\bf 6}, 79-89 (Russian).  English Translation: \textsl{J. Soviet Math.} {\bf 52} (1990), 2799-2805.



\end{thebibliography}
\end{document}